\newtheorem{thm}{Theorem}
\newtheorem{lem}{Lemma}
\newtheorem{prop}{Proposition}
\newtheorem{rem}{Remark}
\newtheorem{assum}{Assumption}
\newtheorem{defn}{Definition}
\newcommand{\rfig}[1]{Fig.\,\ref{#1}} 
\newcommand{\req}[1]{\eqref{#1}} 
\newcommand{\bfmath}[1]{\mbox{\boldmath $#1$}} 
\newcommand{\rtab}[1]{Table\,\ref{#1}}
\newcommand{\rlem}[1]{Lemma\,\ref{#1}}
\newcommand{\rrem}[1]{Remark\,\ref{#1}}
\newcommand{\rsec}[1]{Section\,\ref{#1}}
\newcommand{\rdef}[1]{Definition\,\ref{#1}}
\newcommand{\ras}[1]{Assumption\,\ref{#1}}
\newcommand{\ralg}[1]{Algorithm\,\ref{#1}}
\newcommand{\rapp}[1]{Appendix\,\ref{#1}}
\newcommand{\rline}[1]{line\,\ref{#1}}
\newcommand{\rprop}[1]{Proposition\,\ref{#1}}
\newcommand{\rthm}[1]{Theorem\,\ref{#1}}
\newcommand{\qedwhite}{\hfill \ensuremath{\Box}}
\newcommand{\argmax}{\mathop{\rm arg~max}\limits}
\newcommand{\diag}[1]{\mathrm{diag}({#1})}
\begin{document}
\title{Learning-based Event-triggered MPC with Gaussian processes under terminal constraints}
\author{Yuga Onoue, Kazumune Hashimoto, Akifumi Wachi
\thanks{Yuga Onoue is with the Graduate School of Engineering Science, Osaka University, Osaka, Japan (e-mail: yugaro.yugaro@gmail.com)}
\thanks{Kazumune Hashimoto is with the Graduate School of Engineering, Osaka University, Osaka, Japan (e-mail: hashimoto@eei.eng.osaka-u.ac.jp)}
\thanks{Akifumi Wachi is with LINE Corporation, Japan (e-mail: akifumi.wachi@linecorp.com)}
\thanks{This work is supported by JST CREST JPMJCR201, Japan and by JSPS KAKENHI Grant 21K14184.}
}
%\thanks{This paper was produced by the IEEE Publication Technology Group. They are in Piscataway, NJ.}
%\thanks{Manuscript received hoge hoge, 2023; revised hoge hoge, 2023.}
%\markboth{An Article Using IEEEtran.cls for IEEE Journals}
%\IEEEpubid{0000--0000/00\$00.00~\copyright~2023 IEEE}
\maketitle

\begin{abstract}
    The event-triggered control strategy is capable of significantly reducing the number of control task executions without sacrificing control performance. In this paper, we propose a novel learning-based approach towards an event-triggered model predictive control (MPC) for nonlinear control systems whose dynamics is unknown apriori. In particular, the optimal control problems (OCPs) are formulated based on predictive states learned by Gaussian process (GP) regression under a terminal constraint constructed by a symbolic model. The event-triggered condition proposed in this paper is derived from the recursive feasibility so that the OCPs are solved only when an error between the predictive and the actual states exceeds a certain threshold. Based on the event-triggered condition, we analyze the convergence of the closed-loop system and show that the state of the system enters the terminal set in finite time if the uncertainty of the GP model becomes small enough. Moreover, in order to reduce the uncertainty of the GP model and increase efficiency to find the optimal solution, we provide an overall learning-based event-triggered MPC algorithm based on an iterative task. Finally, we demonstrate the proposed approach through a tracking control problem.
\end{abstract}

\begin{IEEEkeywords}
Model Predictive Control, Event-triggered Control, Gaussian Process, Symbolic model
\end{IEEEkeywords}

\subsection{Introduction}\label{introsec}
Model predictive control (MPC) \cite{camacho2013model} is an advanced framework for optimizing the performance of control systems under various constraints on states and control inputs online. In terms of performance, MPC is often superior to other control approaches since it takes predictions of the process into account, not only current states, which can compute current control actions based on future responses. Due to such an advantage, MPC is one of the most common approaches applied to various practical applications such as automated control systems \cite{li2015trajectory}, smart grids \cite{shi2018model}, chemical processes \cite{eaton1992model}, resource allocations \cite{kohler2018dynamic}, and networked control systems~\cite{grune2009networked}.

Generally, MPC requires heavy computation, especially for nonlinear systems, for solving an optimal control problem (OCP) at each sampling time step. In order to reduce such computational burden, an \textit{event-triggered} MPC, which is an extension of the MPC by improving computational efficiency without compromising control performance, has been proposed in \cite{li2014event, li2015periodic, brunner2015robust, brunner2017robust, luo2019robust, hashimoto2017event, zhan2019adaptive, wang2015event, zou2016multirate, yang2018event, li2019event,eventMPC1,eventMPC2,eventMPC3,eventMPC4,eventMPC5,eventMPC6}. In the event-triggered MPC, OCPs are solved only when event-triggered conditions designed to guarantee a certain control performance are violated. The event-triggered MPC strategy allows us to reduce the frequencies of solving the OCPs and thus it has the potential to save the computational burden. For example, in \cite{li2014event, li2015periodic}, the event-triggered conditions were derived by evaluating an error between the actual and the predictive state. Moreover, the recursive feasibility of the OCP and stability of the closed-loop system were shown based on input-to-state stability (ISS). %by assuming that the dynamics of the system can be linearized and there exists a Lyapunov function in a control system. 
In \cite{brunner2015robust, brunner2017robust, luo2019robust}, event-triggered strategies were introduced by updating the parameters of a tube-based MPC, and an average sampling rate was analyzed. Adaptive event-triggered MPCs were proposed in \cite{hashimoto2017event, zhan2019adaptive}, in which the event-triggered conditions were evaluated intermittently with the intervals selected adaptively by incorporating periodic event-triggered control frameworks. Event-triggered MPCs were studied for networked control systems with packet losses, transmission delays, or quantization errors in \cite{wang2015event, zou2016multirate, yang2018event, li2019event}. 

In the aforecited event-triggered MPC frameworks, it is typically assumed that the models for predicting the states and solving the OCPs are known \textit{apriori}. However, even though such models could often be obtained by, e.g., first-principle modeling, it is sometimes difficult or even impossible in practice to obtain them due to high nonlinearities and complexities of the dynamics. This fact implies that the control performance may be degraded when applying event-triggered MPC to actual systems because the models that play an essential role in the process prediction cannot be accurately obtained. 
In order to deal with such problems, %\textit{data-driven} approaches have been proposed for MPC by using measured data directly (see, e.g., \cite{kadali2003data, piga2017direct, piga2019performance}). 
%Moreover, 
 \textit{learning-based} approaches, in which the underlying dynamics is learned based on the training data, are proposed for the MPC design (see, e.g., \cite{rasmussen2003gaussian, deisenroth2013gaussian,kocijan2016modelling}). 
In the learning-based approach, {Gaussian process} (GP) regression \cite{rasmussen2003gaussian} has been employed to learn the underlying dynamics of the control system. 
%A GP model is a nonlinear nonparametric, Bayesian-based model that allows us to estimate the underlying dynamics with relatively a few amount of data (see, e.g., \cite{deisenroth2013gaussian}). 
Furthermore, various theoretical analysis on an error (or regret) bound between the underlying function and the GP model have been provided in recent years (see, e.g. \cite{srinivas2012information, hashimoto2020learning}). 
%For example, \cite{srinivas2012information} investigates a probabilistic error (or regret) bound with the information gain under an assumption that the underlying function lies in the {reproducing kernel Hilbert space} (RKHS). The deterministic error bound has also been investigated in recent years (see, e.g., \cite{hashimoto2020learning}). 

Motivated by the aforementioned background, in this paper we propose a novel learning-based approach towards an event-triggered MPC for nonlinear control systems whose dynamics is unknown \textit{apriori}. 
We in particular adopt the GP regression to learn the underlying dynamics and investigate the convergence of the closed-loop system under the proposed event-triggered MPC algorithm. 
First, the OCPs are formulated based on state predictions by the GP regression under a {terminal constraint}, in which the predictive state at the terminal time is constrained to belong to a prescribed terminal region. The OCP with the terminal constraint is one of the common schemes to guarantee stability (see, e.g., \cite{Chen1998a}). 
Then, we provide an event-triggered condition so that the OCPs are solved only when an error between the predictive and the actual state exceeds a certain threshold (for details, see \rsec{evtrigmpcsec}). 
Moreover, in order to reduce the uncertainty of the GP model and increase efficiency to find the optimal solution, we provide an overall algorithm of learning-based event-triggered MPC based on an {iterative task} %(see, e.g., \cite{deisenroth2013gaussian}), 
which consists of an update phase to learn the unknown system and an execution phase to implement the event-triggered MPC and collect training data (for details, see \rsec{etmpcalgsec}). 
The event-triggered condition proposed in this paper is derived based on recursive feasibility (for details, see \rsec{feasibilitysec}), which signifies that, under an assumption that the OCP at the current time step is feasible, the OCP for the next time step is shown to be also feasible. 
Moreover, convergence is analyzed using the concept of decreasing horizon strategy \cite{hashimoto2017event}, in which the prediction horizon decreases as the OCPs are solved online (for details, see \rsec{stabilitysec}). 
%In order to design the event-triggered condition and the terminal set, we use the deterministic error bound with an assumption that the underlying function of the system lies in the RKHS corresponding to a squared exponential kernel function. In particular, by evaluating a deterministic error bound with respect to a kernel metric, the event-triggered condition is derived to guarantee the recursive feasibility of the OCP.  
%Finally, the terminal set is constructed by computing a contractive set \cite{blanchini1999set}. 
%In particular, we apply a symbolic abstraction technique (see, e.g., \cite{tabuada2009verification}), which is a method to approximate or abstract behaviors of the control system with a finite transition system. 
%The contractive set is constructed by solving a {safety game} as one of the algorithmic techniques from supervisory control \cite{tabuada2009verification}. Here, we propose to formulate an augmentation of the symbolic model proposed in \cite{hashimoto2020learning} to compute a contractive set and establish the terminal set (for details, see \rsec{terminalsetsec}). 

Summarizing, the main contribution of this paper is given as follows:
\begin{enumerate}
    \item We propose to formulate a learning-based event-triggered MPC with GP regression under a terminal constraint. 
    The event-triggered condition is given such that the OCPs are solved only when an error between the predictive and actual states exceeds a given threshold. 
    %In particular, we analyze the recursive feasibility of OCP and the stability of the closed-loop system under the (learning-based) event-triggered MPC scheme.
    %Moreover, by employing an iterative task, the proposed event-triggered strategy has the potential to optimize the performance while reducing the frequency of solving the OCPs. 
    \item Based on the event-triggered condition, we analyze the feasibility and convergence of the closed-loop system. In particular, we show that, as the uncertainty of the GP model becomes smaller, recursive feasibility and the finite-time convergence to the terminal region are achieved.
    %The stability of the closed-system can be guaranteed as the uncertainty of the GP model becomes smaller, and an iterative task algorithm is proposed to learn the GP model and reduce the uncertainty as well as update event-triggered optimal policies.
\end{enumerate}

\smallskip
\textit{Related works}: 
Our approach is related to several applications of machine learning techniques to design optimal control policies. In the following, we describe how our proposed approach differs from the related works to highlight our main contributions.

The results of designing optimal control policies based on machine learning techniques can be broadly divided into two categories depending on whether models are constructed; 
see model-free approaches \cite{zhong2014event, baumann2018deep, vamvoudakis2018model, funk2021learning} and model-based approaches \cite{deisenroth2013gaussian, rosolia2017learning, kocijan2004gaussian, aswani2013provably, yoo2017event, yoo2019event, manzano2020robust, maddalena2021kpc, hashimoto2020learnings, maiworm2018stability, hewing2018cautious, hewing2019cautious, maiworm2021online, koller2018learning}. 
Our approach is particularly related to the second category since controllers are designed from the (GP) model estimated by the training data. Among the model-based approaches, applications of machine learning to the event-triggered MPC for compensating model uncertainties were proposed in \cite{yoo2017event, yoo2019event}. 
%In \cite{yoo2017event, yoo2019event}, a support vector machine (SVM) and an empirical risk minimization (ERM) were utilized to design the compensators, and the feasibility of the OCP was analyzed with respect to the learning error. Moreover, the stability analysis of the control system was provided in the form of ISS by assuming the existence of a terminal set and the local stabilizing controller, and by regarding an optimal cost as a candidate Lyapunov function. 
In contrast to these methods, we employ the GP regression as a machine learning technique to learn dynamical systems. Furthermore, we provide a detailed algorithm for constructing the terminal set for unknown systems. 
%In general, the terminal set could be constructed by linearizing the control system and designing a local state feedback controller (see, e.g., \cite{Chen1998a}). One of the drawbacks of this approach is, however, that the terminal set is only designed as a convex invariant set such as an ellipsoidal set. Such a restriction could imply that the terminal set becomes unnecessarily small (since the terminal set is restricted to the ellipsoidal set), and so the terminal constraint used in the OCP may become unnecessarily restrictive (i.e., feasible domain of the OCP may become small).  
In particular, we provide a method to compute a general terminal set (e.g., the terminal set is not necessary to be an ellipsoidal set and can be non-convex) by applying a novel technique of symbolic abstraction \cite{hashimoto2020learning} and formulating an augmentation of the notation.
%The symbolic model is constructed based on the deterministic error bounds under the SE kernel metric and allows us to construct the contractive (terminal) set and the safety controller. 
%As we will see later, construction of the contractive set allows us to analyze the recursive feasibility and derive the event-triggered condition, and to analyze the convergence via decreasing horizon strategy. 
%In addition, based on the terminal set, we derive the stability condition of the control system by analyzing the recursive feasibility of the OCPs with the prediction horizon decreasing without assuming the existence of a known Lyapunov function. 
The authors in \cite{maddalena2021kpc} proposed a learning-based predictive control based on a kernel ridge regression (KRR) with deterministic safety guarantees. %The safety guarantees were established by constructing multi-step ahead models of the unknown system with the deterministic error bounds from the assumption of the RKHS. 
The approach presented in this paper is different and advantageous from \cite{maddalena2021kpc} in the following sense. 
First, we provide detailed, analytical error bounds for multi-step predictions under the same assumption (i.e., the unknown system lies in the RKHS).  
In particular, we show that the deterministic error bounds for multi-step ahead predictions can be derived recursively using the SE kernel and the corresponding kernel metric (although \cite{maddalena2021kpc} provided error bounds on the multi-step ahead predictions, they did not analytically investigate them in a detailed way). 
Furthermore, we show that the recursive feasibility of the OCPs is guaranteed if the error bounds on the unknown function become small enough. 
In addition, we provide a way to construct terminal (contractive) set, and show that the state trajectory enters the terminal set in finite time and stays therein for all future times. 

%we also provide a way of constructing the terminal set, which is utilized to show that the system remains in it for all times. 
%analyze the recursive feasibility of the OCPs by evaluating those error bounds, and show that. 
%between the actual state and the predictive one with respect to the kernel metrics.

Optimal control frameworks that incorporate the GP regression were investigated in \cite{hewing2018cautious, hewing2019cautious,deisenroth2013gaussian, kocijan2004gaussian, hashimoto2020learnings, maiworm2018stability, maiworm2021online}. For instance, the authors in \cite{hewing2018cautious, hewing2019cautious} formulated a chance-constrained MPC in terms of probabilistic reachable sets, and a sparse GP was introduced as an approximation technique to reduce the computational burden of solving the OCP. %In contrast to this method, we propose an event-triggered strategy for reducing the frequency of solving OCP and the dimension of each OCP to save computational burden. 
%In \cite{deisenroth2013gaussian}, GP was used as a model-based policy search method to express model uncertainty, and the policy was improved by analytic policy gradients. Moreover, the authors in \cite{hashimoto2020learnings} formulated an optimal control problem for networked control systems, such that the optimal control and communication policies were jointly designed based on GP models. In order to design optimal policies, a moment matching technique was employed as an approximation technique when solving a Bellman equation. 
%However, the authors in \cite{hewing2018cautious,hewing2019cautious,deisenroth2013gaussian, hashimoto2020learnings} did not provide any theoretical analysis on recursive feasibility and convergence of the closed-loop system. 
In contrast the existing works of \cite{hewing2018cautious,hewing2019cautious,deisenroth2013gaussian, hashimoto2020learnings}, the approach presented in this paper allows us to analyze the recursive feasibility of the OCPs and the convergence of the closed-loop system. 
In \cite{maiworm2018stability, maiworm2021online}, output feedback MPC was formulated based without using a terminal constraint. The robust stability was established in the form of ISS with respect to the modeling error between the GP model and the underlying system. Moreover, robust stability was derived by assuming the optimal cost function as a candidate Lyapunov function. Our approach differs from \cite{maiworm2018stability, maiworm2021online} in the following sense. First, we provide the optimal control policy based on an event-triggered MPC framework. Our approach solves the OCPs only when needed according to a well-designed event-triggered condition to reduce the frequency of solving the OCPs. Second, we focus on the learning-based MPC formulation \textit{under a terminal constraint} and investigate the recursive feasibility and the convergence of the closed-loop system. 
Since the MPC strategy under the terminal constraint is one of the other common techniques, we argue that it is worth investigating its theoretical analysis, in particular on the recursive feasibility and convergence. 
Third, while \cite{maiworm2018stability, maiworm2021online} analyzed the stability in a probabilistic sense, the approach presented in this paper allows us to analyze convergence (and feasibility) in a deterministic sense based on the error bound presented in \cite{hashimoto2020learnings}. Lastly, the convergence is established via decreasing horizon strategy (instead of regarding the optimal cost as the Lyapunov function candidate), and it is shown via a dual-mode strategy that the state trajectory enters the terminal set in finite time and stays therein for all future times.%derived from both the event-triggered condition that guarantees the recursive feasibility of the OCP with the prediction horizon decreasing and the terminal set constructed by a symbolic model.
%The authors in \cite{koller2018learning} proposed a learning-based MPC for safe exploration with the GP regression. The control scheme proposed in \cite{koller2018learning} fulfilled high-probability safety guarantees based on multi-step ahead predictions. In order to compute probabilistic error bounds for the multi-step ahead predictions, the assumption that the unknown dynamics lie in the RKHS was utilized. In contrast to \cite{koller2018learning}, using the same assumption, we derive deterministic error bounds for multi-step ahead predictions based on the approach presented in \cite{hashimoto2020learnings} and provide the deterministic convergence condition with respect to the uncertainty of the GP model. Moreover, in contrast to \cite{koller2018learning}, we provide detailed analysis of recursive feasibility, which derives a suitable condition on the uncertainty of the GP model such that the OCPs become feasible.  Furthermore, based on the analysis of the recursive feasibility, we derive the event-triggered strategy that aims at reducing the frequency of solving the OCPs. In addition, we provide a detailed way of how to construct the safe (terminal) set, in which the state trajectory is able to stay for all times.

\smallskip
\textit{Notation}: We denote by $\mathbb{N}, \mathbb{N}_{\ge a}, \mathbb{N}_{>a}$, and $\mathbb{N}_{a:b}$ the sets of integers, intergers larger than or equal to $a$, integers lager than $a$, and integers from $a$ to $b$, respectively. 
Let $\mathbb{R}, \mathbb{R}_{\ge a}$, and $\mathbb{R}_{>a}$ be the sets of reals, reals larger than or equal to $a$, and reals larger than $a$, respectively. 
Given $a, b \in \mathbb{R}$ with $a \le b$, let $[a, b]$ be the interval set from $a$ to $b$. 
Given $a \in \mathbb{R}$ and $b \in \mathbb{R}_{\ge 0}$, we let $[a \pm b] = [a - b, a + b]$. 
Given $\mathcal{A}, \mathcal{B} \subseteq \mathbb{R}^n$ with $\mathcal{A} \subseteq \mathcal{B}$, let $[\mathcal{A}, \mathcal{B}]$ be a set between $\mathcal{A}$ and $\mathcal{B}$. 
Given two vectors $a, b \in \mathbb{R}^n$, the {Hadamard product} of $a$ and $b$ (i.e., the pointwise product of $a$ and $b$) is denoted by $a \odot b$. 
For two given sets $\mathcal{A}, \mathcal{B} \subseteq \mathbb{R}^{n}$, denote by $\mathcal{A} \oplus \mathcal{B}$ the {Minkowski sum} $\mathcal{A} \oplus \mathcal{B} = \{a + b \in \mathbb{R}^n| a \in \mathcal{A}, b\in \mathcal{B}\}$ and $\mathcal{A} \times \mathcal{B}$ the {Cartesian product} $\mathcal{A} \times \mathcal{B} = \{(a, b) | a \in \mathcal{A}, b \in \mathcal{B}\}$.
Let $0_n, 1_n \in \mathbb{R}^n$ be the all-zeros vector and all-ones vector, respectively. 
For a given ${x} \in \mathbb{R}^n$ and a positive definite matrix $P \in \mathbb{R}^{n \times n}$, we denote by $\|{x}\|$ the Euclidean norm of ${x}$ and $\|{x}\|_{P} \in \mathbb{R}^{n \times n}$ the weighted norm of ${x}$ for the matrix $P$, i.e., $\|{x}\|_{P} = \sqrt{{x}^\top P {x}}$. We denote a metric by a function ${\rm d} : \mathbb{R}^n \times \mathbb{R}^n \to \mathbb{R}_{\ge 0}$ satisfying ${\rm d}({x}, {x}') = 0$ iff ${x} = {x}'$, ${\rm d}({x}, {x}') + {\rm d}({x}', {x}'') \ge {\rm d}({x}, {x}'')$, and ${\rm d}({x}, {x}') = {\rm d}({x}', {x})$ with ${x}, {x}', {x}'' \in \mathbb{R}^n$. 
Given $\mathcal{X} \subseteq \mathbb{R}^n$ and ${ \eta} = [\eta_1, \ldots, \eta_n]^\top \in \mathbb{R}_{> 0}^n$, we denote by $[\mathcal{X}]_{ \eta} \subseteq \mathbb{R}^n$ the lattice in $\mathcal{X}$ with the quantization prameter ${ \eta}$, i.e., $[\mathcal{X}]_{ \eta} = \{{x} \in \mathcal{X} : x_i = \mathsf{z}_i \frac{2}{\sqrt{n}} \eta_i, \mathsf{z}_i \in \mathbb{N}, i = 1, 2, \ldots, n\}$, where $x_i \in \mathbb{R}$ is the $i$-th element of ${x}$. 
For given $\gamma \in \mathbb{R}_{\ge 0}$, $x \in \mathbb{R}^n$, and a metric ${\rm d}(\cdot, \cdot)$, we denote by $\mathcal{B}_{{\rm d}} ({x}; \gamma) \subseteq \mathbb{R}^n$ and $\widetilde{\mathcal{B}}_{{\rm d}} ({x}; \gamma) \subseteq [\mathbb{R}^n]_{\eta_x}$ the ball set of radius $\gamma$ centered at $x$ in the continuous and discrete space such that $\mathcal{B}_{{\rm d}}({x}; \gamma) = \{{x}' \in \mathbb{R}^n: {\rm d}({x}, {x}') \le \gamma \}$ and $\widetilde{\mathcal{B}}_{{\rm d}}({x}; \gamma) = \{{x}' \in [\mathbb{R}^n]_{\eta_x}: {\rm d}({x}, {x}') \le \gamma \}$, respectively.
Given $\mathcal{X} \subseteq \mathbb{R}^n$, $\mathcal{X}_\mathsf{q} \subseteq [\mathbb{R}^n]_{\eta_x}$, $\gamma \in \mathbb{R}_{\ge 0}$, and a metric ${\rm d}(\cdot, \cdot)$,  we denote by $\mathsf{Int}_{\rm d}(\mathcal{X}; \gamma) \subseteq \mathbb{R}^n $ and $\widetilde{\mathsf{Int}}_{\rm d}(\mathcal{X}_\mathsf{q}; \gamma) \subseteq [\mathbb{R}^n]_{\eta_x}$ the set of all the states that are $\gamma$ or more internal away from the boundary of $\mathcal{X}$ and $\mathcal{X}_\mathsf{q}$ w.r.t the metric ${\rm d}(\cdot, \cdot)$, such that $\mathsf{Int}_{\rm d}(\mathcal{X}; \gamma) = \{{x} \in \mathcal{X}: \mathcal{B}_{\rm d}({x}; \gamma) \subseteq \mathcal{X}\}$ and $\widetilde{\mathsf{Int}}_{\rm d}(\mathcal{X}_\mathsf{q}; \gamma) = \{{x}_\mathsf{q} \in \mathcal{X}_\mathsf{q}: \widetilde{\mathcal{B}}_{\rm d}({x}_\mathsf{q}; \gamma) \subseteq \mathcal{X}_\mathsf{q}\}$, respectively.
Given $\mathcal{X} \subseteq \mathbb{R}^n$, $\mathcal{X}_\mathsf{q} \subseteq [\mathbb{R}^n]_{\eta_x}$, $\gamma \in \mathbb{R}_{\ge 0}$, and a metric ${\rm d}(\cdot, \cdot)$, we denote by $\mathsf{Out}_{\rm d}(\mathcal{X}; \gamma) \subseteq \mathbb{R}^n$ and $\widetilde{\mathsf{Out}}_{\rm d}(\mathcal{X}_\mathsf{q}; \gamma) \subseteq [\mathbb{R}^n]_{\eta_x}$ the set of all the states in $\mathcal{X}$ and $\mathcal{X}_\mathsf{q}$ or all the states that are $\gamma$ or less external away from the boundary of  $\mathcal{X}$ and $\mathcal{X}_\mathsf{q}$ w.r.t the metric ${\rm d}(\cdot, \cdot)$, such that $\mathsf{Out}_{\rm d}(\mathcal{X}; \gamma) = \{{x}' \in \mathbb{R}^{n} : \exists x \in \mathcal{X},  x' \in \mathcal{B}_{\rm d}({x}; \gamma)\}$ and $\widetilde{\mathsf{Out}}_{\rm d}(\mathcal{X}_\mathsf{q}; \gamma) = \{{x}_\mathsf{q}' \in [\mathbb{R}^{n}]_{\eta_x} : \exists {x}_\mathsf{q} \in \mathcal{X}_\mathsf{q}, x_\mathsf{q}' \in \widetilde{\mathcal{B}}_{\rm d}({x}_\mathsf{q}; \gamma) \}$, respectively.

\section{Preliminaries}\label{prelisec}
In this section, we recall some concepts of Gaussian process (GP) regression \cite{rasmussen2003gaussian} and define a notation of a transition system.

\subsection{Gaussian process regression}\label{gprsec}
This section reviews some basic concepts of \textit{Gaussian process} (GP) regression \cite{rasmussen2003gaussian}, a supervised machine learning framework that is used to solve the regression problem. 
Let $f: \mathbb{R}^n \to \mathbb{R}$ be an unknown function and given a set of $N \in \mathbb{N}_{>0}$ input-output training data $\mathcal{D}_N = \{X_N, Y_N\}$ with $X_N = [x_1, \ldots, x_N]^\top$ and $Y_N = [y_1, \ldots, y_N]$ by $y_t = f(x_t) + w_t$ for $t \in \mathbb{N}_{1: N}$, where $x_t \in \mathbb{R}^n$ is the training input, $y_t \in \mathbb{R}$ is the training target, and $w_t \sim \mathcal{N}(0, \sigma_w^2)$ is the Gaussian distributed white noise. In the GP regression, the prediction is fully specified by the mean function $\mu : \mathbb{R}^n \to \mathbb{R}$ and the covariance function $\mathsf{k}: \mathbb{R}^n \times \mathbb{R}^n \to \mathbb{R}$ that represents the covariance between the data points. In this paper, we choose a prior mean function as $\mu \equiv 0$ and the covariance function as a squared exponential (SE) kernel, which is defined by
\begin{align}\label{sekernel}
\mathsf{k} (x_{t}, x_{t'}) = \alpha^2 \exp \left(-\frac{1}{2}\|{x}_t - {x}_{t'}\|_{ \Lambda^{-1}}^2\right),  
\end{align}
where $\alpha^2 \in \mathbb{R}_{>0}$ is the variance of the function $f$ and $\Lambda = \diag{\lambda_1, \ldots, \lambda_n}$ characterizes the length-scale $\lambda_i$ ($i \in \mathbb{N}_{1: n}$) of the input data. The posterior distribution of the unknown function $f$ at the new data point $x \in \mathbb{R}^n$ is then provided as a Gaussian distribution $\mathcal{N}(\mu_{f}(x; \mathcal{D}_N), \sigma_f(x; \mathcal{D}_N))$ with the mean function $\mu_{f}(x; \mathcal{D}_N)$ and the variance function $\sigma^2_f(x; \mathcal{D}_N)$ by
\begin{align}
\mu_{f}({x}; \mathcal{D}_{N}) & = \mathsf{k}_{N}^{\star \top}({x})(K + \sigma_{w}^2 I)^{-1}Y_{N}, \\ 
\sigma_{f}^2({x}; \mathcal{D}_{N}) & = \mathsf{k}({x}, {x}) - \mathsf{k}_{N}^{\star\top}({x})(K + \sigma_{w}^2 I)^{-1}\mathsf{k}_{N}^\star({x}),
\end{align}
where $\mathsf{k}^\star _N(z) = [\mathsf{k} (x, x_{1}), \ldots, \mathsf{k} (x, x_{N})]^\top$, $K \in \mathbb{R}^{n \times n}$ is the kernel matrix with entries $K_{t, t'} = \mathsf{k}(x_t, x_{t'})$ for $t, t' \in \mathbb{N}_{1: N}$, and $I$ is the identity matrix of appropriate dimension. In the case of multiple outputs, the GP regression is individually applied to each element of the outputs with the covariance between the different outputs set to zero.

\subsection{Transition system}\label{transitionsystemsec}
The notion of a transition system is formalized as follows:
\begin{defn}\label{transdef}
\normalfont
A transition system is a triple $\Sigma = (\mathcal{X}, \mathcal{U}, G)$ consisting of:
\begin{itemize}
    \item $\mathcal{X}$ is a set of states;
    \item $\mathcal{U}$ is a set of inputs; 
    \item $G: \mathcal{X} \times \mathcal{U} \to 2^{\mathcal{X}}$ is a transition map.\qedwhite
\end{itemize}
\end{defn}
A transition of the system from a state ${x} \in \mathcal{X}$ to ${x}' \in \mathcal{X}$ by applying a control input ${u} \in \mathcal{U}$ (i.e., $x' \in G(x, u)$) is also denoted by ${x} \xrightarrow{{u}} {x}'$ and the state ${x}'$ is called an ${u}$-successor of the state ${x}$. That is, we denote by the transition map $G({x}, {u})$ the set of ${u}$-successor of a state ${x}$. Moreover, we denote by $\mathcal{U}({x})$ the set of inputs ${u} \in \mathcal{U}$, for which $G({x}, {u}) \neq \varnothing$.

\section{Problem formulation}\label{problemsec}
In this section, we provide a system description and the goal of this paper.

\subsection{System description}
We consider the following nonlinear discrete-time system:
\begin{align}\label{realdynamics}
&{x}({t + 1}) = {f}({x}(t), {u}(t)) + {w}(t), \notag \\
&\ {u}(t) \in \mathcal{U}, \ {w}(t) \in \mathcal{W}, \ x(0) \in \mathcal{X}_{\rm init}
\end{align}
for all $t \in \mathbb{N}_{\ge 0}$, where ${x}(t) \in \mathbb{R}^{n_x}$ is the state, ${u}(t) \in \mathbb{R}^{n_u}$ is the control input, ${w}(t) \in \mathbb{R}^{n_x}$ is the additive noise, and ${f} : \mathbb{R}^{n_x} \times \mathbb{R}^{n_u} \to \mathbb{R}^{n_x}$ is the \textit{unknown} transition function. As shown in \req{realdynamics}, control input, additive noise, and the initial state are restricted to lie in the given sets $\mathcal{X} \subset \mathbb{R}^{n_x}$, $\mathcal{U} \subset \mathbb{R}^{n_u}$, $\mathcal{W} \subset \mathbb{R}^{n_x}$, and $\mathcal{X}_{\rm init} \subset \mathbb{R}^{n_x}$, respectively, where $\mathcal{X}_{\rm init}$ and $\mathcal{U}$ are compact and $\mathcal{W}$ is given by $\mathcal{W} = \mathcal{W}_1 \times \ldots \times \mathcal{W}_{n_x}$ with $\mathcal{W}_i = \{{w}_i \in \mathbb{R} : |w_i| \le \sigma_{w_i}\}$ for all $i \in \mathbb{N}_{1:n_x}$ for given $\sigma_{w_i} > 0$, $i \in \mathbb{N}_{1:n_x}$.
% \yuga{$\mathcal{X}_{\rm init}$の定義が必要かどうか検討する。}

Let $f_i$, $i\in \mathbb{N}_{1: n_x}$ denote the $i$-th element of $f$, i.e., $f = [f_1, f_2, \ldots, f_{n_x}]^\mathsf{T}$. In addition, for each $i\in \mathbb{N}_{1: n_x}$, let $\mathsf{k}_i : \mathbb{R}^{n_x + n_u}\times \mathbb{R}^{n_x + n_u} \rightarrow \mathbb{R}_{\geq 0}$ denote a given squared exponential (SE) kernel function (see  \req{sekernel}) with the given hyperparameters denoted as $\alpha_{i} \in \mathbb{R}_{> 0}$ and $\Lambda_{i} = \diag{\{\lambda_{i,\ell}\}^{n_x+n_u} _{\ell=1}}\in \mathbb{R}_{>0}^{(n_x + n_u) \times (n_x + n_u)}$. In this paper, we assume that each $f_i$ lies in a reproducing kernel Hilbert space (RKHS) corresponding to $\mathsf{k}_i$:

\begin{assum}\label{rkhsas}
\normalfont
For each $i\in \mathbb{N}_{1: n_x}$, let $\mathcal{H}_{\mathsf{k}_i}$ be the RKHS corresponding to the kernel $\mathsf{k}_i$ with the induced norm denoted as $\|\cdot \|_{\mathsf{k}_i}$. 
Then, for all $i\in \mathbb{N}_{1: n_x}$, $f_i \in \mathcal{H}_{\mathsf{k}_i}$. 
Moreover, $\|f_i\|_{\mathsf{k}_i} \le b_i$ for all $i\in \mathbb{N}_{1: n_x}$. 
\qedwhite
\end{assum}
\ras{rkhsas} implies that each $f_i$, $i \in \mathbb{N}_{1: n_x}$ is characterized by $f_i({z}) = \Sigma_{n = 1}^\infty \alpha_n \mathsf{k}_i({z}, {z}_n)$ with ${z} = [{x}^\top, {u}^\top]^\top$, where ${z}_n = [{x}_n^\top, {u}_n^\top]^\top \in \mathbb{R}^{n_x + n_u}$, $n \in \mathbb{N}_{>0}$ are the representer points, $\alpha_n \in \mathbb{R}$, $n \in \mathbb{N}_{>0}$ are the weight parameters such that the induced norm $\|f_i\|^2 _{\mathsf{k}_i} = \sum_{n=1}^\infty \sum_{n'=1}^\infty \alpha_n \alpha_{n'} \mathsf{k}_i ({ z}_n, { z}_{n'})$ is bounded. 
Note that \ras{rkhsas} is one of the common assumptions in learning-based control with the GP regression (see, e.g., \cite{berkenkamp2016safe}), and several ways to compute $b_i$ have recently proposed (see, e.g., \cite{maddalena2020deterministic,hashimoto2020learning,jackson2021}). 
Although we must restrict here the class of the kernel function by the SE kernel, the SE kernel fulfills some nice properties that are important and worth to be employed. 
For instance, the SE kernel is known to satisfy a \textit{{universal approximation property}}, which means that the RKHS corresponding to the SE kernel is dense in the space of all continuous functions over an arbitrary compact set, i.e., {any} continuous function within any compact set can be estimated arbitrarily well by a function that lies in the RKHS; for details, see \cite{universalkernel}.

% Moreover, in this paper, the hyperparameters of the SE kernel in the state space $\mathbb{R}^{n_x}$ (i.e., $\alpha_{i} \in \mathbb{R}_{> 0}$ and $\Lambda_{i, x} = \diag{\lambda_{i, 1}, \ldots, \lambda_{i, n_x}}$, where $\Lambda_{i, x}$ is a diagonal matrix that collects the first $n_x$ diagonal entries of $\Lambda_i$.) is updated such that the following assumption is satisfied.
% \begin{assum}\label{rkhsas}
% \normalfont
% Given the training dataset $\mathcal{D}_{N} = \{\mathcal{D}_{N, i}\}_{i = 1}^{n_x}$, 
% the hyperparameters of the SE kernel in the state space is updated such that the following conditions are satisfied:
% \begin{align}\label{gpupdatecon}
%     \alpha_i b_\ell \le \sqrt{\lambda_{i, \ell}},
% \end{align}
% for all $i, \ell \in \mathbb{N}_{1: n_x}$.
% \qedwhite
% \end{assum}

\subsection{Goal of this paper}\label{goalsec}
The goal of this paper is to formulate an event-triggered model predictive control (MPC) strategy, where optimal control problems (OCPs) are solved \textit{only when} they are needed according to a well-designed event-triggered condition. 
Let $t_k$, $k \in \mathbb{N}_{\ge 0}$ with $t_0 = 0$ be the time instants when the OCPs (which will be formalized in details in \rsec{ocpsec}) are solved. In the event-triggered control, these time instants are computed as follows: $t_0 = 0$, and for all $k \in \mathbb{N}_{>0}$, 
\begin{align}
    t_{k+1} = \min \{ t_k + j: L(x(t_k + j)) >0 \}, 
\end{align}
where $L(\cdot)$ is a given function that characterizes the event-triggered condition. 
The control inputs are given by 
\begin{align}\label{appliedinput}
    u(t_k + j) = u^* ( j | t_k)
\end{align}
for all $ j \in \mathbb{N}_{0 : t_{k+1} -t_k-1}$, where $u^* (j | t_k)$ is a portion of the optimal control inputs obtained by solving the OCP at $t_k$. \req{appliedinput} implies that the optimal control inputs obtained by the OCP at $t_k$ are applied until the next update time $t_{k+1}$. Detailed formalization of the OCP as well as the event-triggered condition are given in the next section. 

\section{Event-triggered MPC strategy}\label{etmpcstrategysec}
In this section, we formulate an event-triggered MPC. In \rsec{learndsec}, we provide how to learn the unknown function $f$ and then define an optimal control problem in \rsec{ocpsec}. In \rsec{evtrigmpcsec}, we provide an event-triggered condition and finally propose an overall event-triggered MPC algorithm in \rsec{etmpcalgsec}.

\subsection{Learning dynamics with Gaussian processes}\label{learndsec}
In this paper, we learn each element of ${f}$, i.e., $f_i$, $i \in \mathbb{N}_{1: n_x}$ by the Gaussian Process (GP) regression. %using the SE kernel $\mathsf{k}_i$. 
Let $\mathcal{D}_{N, i} = \{Z_N, Y_{N, i}\}$ for $i \in \mathbb{N}_{1: n_x}$ be a set of $N \in \mathbb{N}_{> 0}$ measured input-output training data, where $Z_N = [{z}_1, \ldots, {z}_N]^\top \in \mathbb{R}^{N \times (n_x + n_u)}$ is the input dataset with ${z}_t = [{x}^\top(t), {u}^\top(t) ]^\top \in \mathbb{R}^{n_x + n_u}$ for $t \in \mathbb{N}_{1: N}$ and $Y_{N, i} = [y_{1, i}, \ldots, y_{N, i}]^\top \in \mathbb{R}^N$ is the output dataset with $y_{t, i} = x_i(t + 1)\in \mathbb{R}$ for $t \in \mathbb{N}_{1: N}$ ($x_i(t)$ denotes the $i$-th element of ${x}(t)$). 
The GP model of $f_i$ for the new point ${z} = [{x}^\top, {u}^\top]^\top$ are given by  $\mathcal{N}(\mu_{f_i}(x, u; \mathcal{D}_{N, i}), \sigma_{f_i}^2(x, u; \mathcal{D}_{N, i}))$, 
where $\mu_{f_i}(x, u; \mathcal{D}_{N, i})$ and $\sigma_{f_i}^2(x, u; \mathcal{D}_{N, i})$ are the posterior mean and the variance given by
\begin{align}
\mu_{f_i}(x, u; \mathcal{D}_{N, i}) & = \mathsf{k}_{N, i}^{\star \top}({z})(K_{i} + \sigma_{w_i}^2 I)^{-1}Y_{N, i}\label{gpmean} \\ 
\sigma_{f_i}^2(x, u; \mathcal{D}_{N, i}) & = \mathsf{k}_{i}({z}, {z}) - \mathsf{k}_{N, i}^{\star\top}({z})(K_{i} + \sigma_{w_i}^2 I)^{-1}\mathsf{k}_{N, i}^\star({z})\label{gpvar}
\end{align}
for all $i \in \mathbb{N}_{1: n_x}$ with $\mathsf{k}_{N, i}^{\star}({z}) = [\mathsf{k}_{i}({z}, {z}_1), \ldots, \mathsf{k}_{i}({z}, {z}_N)]^\top \in \mathbb{R}_{\ge 0}^N$. 
%Note that, as shown in \req{gpmean} and \req{gpvar}, the GP model for $f_i$ \textit{assumes} that the additive noise is drawn independently from the Gaussian distribution $\mathcal{N}(0, \sigma^2_{w_i})$, while the true sequence of the additive noise is uniformly bounded, i.e., $|w_i(t)| \leq \sigma_{w_i}$ for all $t \in \mathbb{N}_{\geq 0}$ (see, e.g, \cite{srinivas2012information}). 

Using \req{gpmean} and \req{gpvar}, together with the assumption that $f_i \in \mathcal{H}_{\mathsf{k}_i}$ (\ras{rkhsas}), we can compute a deterministic error bound on $f_i$, $i \in \mathbb{N}_{1: n_x}$ as follows (see, e.g., \cite{hashimoto2020learning}): 
\begin{lem}\label{errorbound}
\normalfont
Given the training dataset $\mathcal{D}_{N, i} = \{Z_N, Y_{N, i}\}$ for each $f_i$, $i \in \mathbb{N}_{1: n_x}$, and let \ras{rkhsas} hold. Then, for all ${x} \in \mathbb{R}^{n_x}$, ${u} \in \mathbb{R}^{n_u}$, and $i \in \mathbb{N}_{1: n_x}$, it follows that $f_i(x, u)\in \mathcal{F}_i(x, u; \mathcal{D}_{N, i})$, where 
\begin{align}\label{intervalset}
&\!\!\!\mathcal{F}_i(x, u; \mathcal{D}_{N, i}) = [\mu_{f_i}(x, u; \mathcal{D}_{N, i}) \pm \beta_{N, i} \cdot \sigma_{f_i}(x, u; \mathcal{D}_{N, i})], 
\end{align}
with $\beta_{N, i} = \sqrt{b_i^2 - Y_{N, i}^\top (K_i + \sigma_{w_i}^2 I)^{-1}Y_{N, i} + N}$.
\qedwhite
\end{lem}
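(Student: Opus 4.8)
The plan is to work directly in the RKHS and reproduce (in the spirit of \cite{hashimoto2020learning}) the standard deterministic kernel‑ridge‑regression error bound. Fix $i\in\mathbb{N}_{1:n_x}$, abbreviate $\mathsf{k} := \mathsf{k}_i$, $\mathcal{H} := \mathcal{H}_{\mathsf{k}_i}$, $\sigma_w := \sigma_{w_i}$, and let $\Phi^* : \mathcal{H}\to\mathbb{R}^N$ be the sampling operator $(\Phi^* g)_t = g(z_t) = \langle \mathsf{k}(\cdot,z_t), g\rangle_{\mathcal{H}}$, with adjoint $\Phi$; then $\Phi^*\Phi = K_i$ and $C := \Phi\Phi^*$ is a bounded, self‑adjoint, finite‑rank operator on $\mathcal{H}$. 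The first observation is that the posterior mean $\mu_{f_i}(\cdot\,;\mathcal{D}_{N,i})$ coincides with the minimizer over $g\in\mathcal{H}$ of the regularized least‑squares functional $J(g) = \|g\|_{\mathcal{H}}^2 + \sigma_w^{-2}\|Y_{N,i} - \Phi^* g\|^2$: using the push‑through identity $(\Phi^*\Phi+\sigma_w^2 I)^{-1}\Phi^* = \Phi^*(\Phi\Phi^*+\sigma_w^2 I)^{-1}$, the minimizer equals $\Phi(K_i+\sigma_w^2 I)^{-1}Y_{N,i}$, which, evaluated at $z$ via the reproducing property, is exactly $\mathsf{k}_{N,i}^{\star\top}(z)(K_i+\sigma_w^2 I)^{-1}Y_{N,i} = \mu_{f_i}(x,u;\mathcal{D}_{N,i})$ as in \req{gpmean}.

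Since $J$ is a strictly convex quadratic functional whose Hessian operator is $A := I + \sigma_w^{-2}C$, completing the square gives, for every $g\in\mathcal{H}$,
\[
J(g) = J(\mu_{f_i}) + \|g - \mu_{f_i}\|_{A}^2, \qquad \|v\|_A^2 := \langle v, A v\rangle_{\mathcal{H}} .
\]
A direct computation (again via the push‑through identity) shows the optimal value is $J(\mu_{f_i}) = Y_{N,i}^\top(K_i+\sigma_w^2 I)^{-1}Y_{N,i}$. Because $y_{t,i} = f_i(z_t) + w_i(t)$ we have $Y_{N,i} = \Phi^* f_i + W_i$ with $W_i = [w_i(1),\dots,w_i(N)]^\top$, hence $J(f_i) = \|f_i\|_{\mathsf{k}_i}^2 + \sigma_w^{-2}\|W_i\|^2 \le b_i^2 + N$, where the first term is bounded by \ras{rkhsas} and the second by $|w_i(t)|\le\sigma_{w_i}$ (so $\|W_i\|^2\le N\sigma_w^2$). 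Plugging $g = f_i$ into the displayed identity then yields $\|f_i - \mu_{f_i}\|_A^2 \le b_i^2 + N - Y_{N,i}^\top(K_i+\sigma_w^2 I)^{-1}Y_{N,i} = \beta_{N,i}^2$.

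It remains to convert this weighted‑norm bound into the claimed pointwise statement. For any $z = [x^\top,u^\top]^\top$, the reproducing property together with Cauchy--Schwarz gives $|f_i(z) - \mu_{f_i}(z)| = |\langle A^{-1/2}\mathsf{k}(\cdot,z),\, A^{1/2}(f_i - \mu_{f_i})\rangle_{\mathcal{H}}| \le \|A^{-1/2}\mathsf{k}(\cdot,z)\|\cdot\|f_i - \mu_{f_i}\|_A \le \beta_{N,i}\,\|A^{-1/2}\mathsf{k}(\cdot,z)\|$. The last ingredient is the identity $\|A^{-1/2}\mathsf{k}(\cdot,z)\|^2 = \sigma_w^2\langle\mathsf{k}(\cdot,z),(C+\sigma_w^2 I)^{-1}\mathsf{k}(\cdot,z)\rangle_{\mathcal{H}} = \sigma_{f_i}^2(x,u;\mathcal{D}_{N,i})$, which follows from $I - C(C+\sigma_w^2 I)^{-1} = \sigma_w^2(C+\sigma_w^2 I)^{-1}$, from $\Phi(K_i+\sigma_w^2 I)^{-1}\Phi^* = C(C+\sigma_w^2 I)^{-1}$, and from the variance formula \req{gpvar} (noting $\mathsf{k}(z,z) = \|\mathsf{k}(\cdot,z)\|^2$ and $\mathsf{k}_{N,i}^\star(z) = \Phi^*\mathsf{k}(\cdot,z)$). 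Hence $|f_i(x,u) - \mu_{f_i}(x,u;\mathcal{D}_{N,i})| \le \beta_{N,i}\,\sigma_{f_i}(x,u;\mathcal{D}_{N,i})$, i.e.\ $f_i(x,u)\in\mathcal{F}_i(x,u;\mathcal{D}_{N,i})$.

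The main obstacle is the operator‑algebra bookkeeping in the possibly infinite‑dimensional RKHS: one must justify that $C = \Phi\Phi^*$ is a well‑defined bounded finite‑rank self‑adjoint operator, that $C+\sigma_w^2 I$ and $A = I+\sigma_w^{-2}C$ are boundedly invertible with the square roots used above, and that the push‑through identity holds in this setting, so that each manipulation matching $\mu_{f_i}$, $\sigma_{f_i}$, and $J(\mu_{f_i})$ with the closed forms \req{gpmean}--\req{gpvar} is rigorous. A second, minor caveat is that the additive term $N$ in $\beta_{N,i}$ relies on the \emph{bounded}-noise model $|w_i(t)|\le\sigma_{w_i}$ from \req{realdynamics} rather than on the Gaussian idealization used to motivate the GP posterior, so the lemma is to be read with the noise realizations in the training data obeying that bound.
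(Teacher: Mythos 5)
Your argument is correct: the identification of $\mu_{f_i}$ as the regularized least-squares minimizer, the completing-the-square identity $J(g)=J(\mu_{f_i})+\|g-\mu_{f_i}\|_A^2$ with $J(\mu_{f_i})=Y_{N,i}^\top(K_i+\sigma_{w_i}^2I)^{-1}Y_{N,i}$, the bound $J(f_i)\le b_i^2+N$ from \ras{rkhsas} and the bounded-noise model, and the Cauchy--Schwarz step with $\|A^{-1/2}\mathsf{k}(\cdot,z)\|^2=\sigma_{f_i}^2(x,u;\mathcal{D}_{N,i})$ all check out and deliver exactly \req{intervalset}. The paper does not reproduce a proof of this lemma but defers to Appendix~B of \cite{hashimoto2020learning}, and your derivation is essentially that standard deterministic RKHS argument, so there is nothing to flag beyond the routine operator-theoretic facts you already note (finite rank of $C$, invertibility of $A\succeq I$, and the push-through identity), none of which is a genuine gap.
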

For the proof of \rlem{errorbound}, see Appendix B in \cite{hashimoto2020learning}. 
Based on \rlem{errorbound}, let $\delta_{N, i}(x, u) \in \mathbb{R}_{> 0}$ be an uncertainty for an arbitrary point $x \in \mathbb{R}^{n_x}$ and $u \in \mathbb{R}^{n_u}$ denoted by
\begin{align}\label{ndeltadef}
\delta_{N, i}(x, u) = \beta_{N, i}  \sigma_{f_i}(x, u; \mathcal{D}_{N, i}) + \sigma_{w_i},
\end{align}
for all $i \in \mathbb{N}_{1: n_x}$. 
% \onoue{
% In this paper, we assume that the training dataset $\mathcal{D}_N = \{\mathcal{D}_{N, i}\}_{i = 1}^{n_x}$ are given such that the uncertainties $\delta_{N, i}(x, u)$ for all $i \in \mathbb{N}_{1: n_x}$ satisfy the following conditions:
% \begin{assum}\label{datasetas}
% Given the training dataset $\mathcal{D}_N = \{\mathcal{D}_{N, i}\}_{i = 1}^{n_x}$, it follows that $\delta_{N, i}(x, u) < \sqrt{2} \alpha_i b_i$ for all $x \in \mathcal{X}$, $u \in \mathcal{U}$, and $i \in \mathbb{N}_{1: n_x}$.
% \end{assum}
% \ras{datasetas} implies that each training dataset $\mathcal{D}_{N, i}$, $i \in \mathbb{N}_{1: n_x}$ is assume to be given such that the uncertainty $\delta_{N, i}(x, u)$ in $\mathcal{X}$ and $\mathcal{U}$ is bounded by a certain threshold characterized by the SE kernel function $\mathsf{k}_i$. 
% }
Moreover, let $d_i: \mathbb{R}^{n_x} \times \mathbb{R}^{n_x} \rightarrow \mathbb{R}_{\ge 0}$ for all $i \in \mathbb{N}_{1: n_x}$ denote a \textit{kernel metric} \cite{hearst1998support} (with respect to the state-space $\mathbb{R}^{n_x}$) given by 
\begin{align}\label{kernelmetric}
     d_i (&{x}_1, x_2) = \sqrt{\mathsf{k}_{x, i}({x}_1, {x}_1) - 2 \mathsf{k}_{x, i}({x}_1, {x}_2) + \mathsf{k}_{x, i}({x}_2, {x}_2)},
\end{align}
for all $x_1, x_2 \in \mathbb{R}^{n_x}$, where
\begin{align}\label{kx}
    \mathsf{k}_{x, i}(x_1, x_2) = \alpha_{i}^2 \exp\left(-\frac{1}{2}\|x_1 -x_2\|_{\Lambda_{x, i}^{-1}}\right). 
\end{align}
In \req{kx}, $\Lambda_{x, i}$ is a diagonal matrix that collects the first $n_x$ diagonal entries of $\Lambda_{i}$, i.e., $\Lambda_{x, i} = \diag{\lambda_{i, 1}, \ldots, \lambda_{i, n_x}}$ (in other words, $\mathsf{k}_{x, i}$ denotes the SE kernel in the state space $\mathbb{R}^{n_x}$). 
 Using the above notations, we can show that $f_i$ has a continuity property: 
\begin{lem}\label{continuity}
\normalfont
Let \ras{rkhsas} hold. 
Then, $|f_i({x}_1, {u}) - f_i({x}_2, {u}) | \le b_i d_i({x}_1, {x}_2)$ for all ${x}_1, {x}_2 \in \mathbb{R}^{n_x}$ and $u \in \mathbb{R}^{n_u}$.
\qedwhite
\end{lem}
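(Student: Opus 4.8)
The plan is to combine the reproducing property of the RKHS $\mathcal{H}_{\mathsf{k}_i}$ with the Cauchy--Schwarz inequality, and then to reduce the kernel metric on the joint state--input space to the state-space kernel metric $d_i$ by exploiting the product structure of the SE kernel \req{sekernel}.

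First I would use \ras{rkhsas}, which gives $f_i \in \mathcal{H}_{\mathsf{k}_i}$, so the reproducing property yields $f_i({z}) = \langle f_i, \mathsf{k}_i(\cdot, {z}) \rangle_{\mathsf{k}_i}$ for any ${z} = [{x}^\top, {u}^\top]^\top$. Fixing ${u}$ and setting ${z}_1 = [{x}_1^\top, {u}^\top]^\top$, ${z}_2 = [{x}_2^\top, {u}^\top]^\top$, linearity of the inner product gives
\begin{align*}
f_i({x}_1, {u}) - f_i({x}_2, {u}) = \langle f_i,\ \mathsf{k}_i(\cdot, {z}_1) - \mathsf{k}_i(\cdot, {z}_2) \rangle_{\mathsf{k}_i}.
\end{align*}
Applying Cauchy--Schwarz and the bound $\|f_i\|_{\mathsf{k}_i} \le b_i$ from \ras{rkhsas} then yields
\begin{align*}
|f_i({x}_1, {u}) - f_i({x}_2, {u})| \le b_i\, \big\| \mathsf{k}_i(\cdot, {z}_1) - \mathsf{k}_i(\cdot, {z}_2) \big\|_{\mathsf{k}_i}.
\end{align*}

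Next I would expand the remaining norm with the reproducing property once more, obtaining $\| \mathsf{k}_i(\cdot, {z}_1) - \mathsf{k}_i(\cdot, {z}_2) \|_{\mathsf{k}_i}^2 = \mathsf{k}_i({z}_1, {z}_1) - 2\mathsf{k}_i({z}_1, {z}_2) + \mathsf{k}_i({z}_2, {z}_2)$. Here is the one step that needs care: since $\mathsf{k}_i$ is an SE kernel on $\mathbb{R}^{n_x+n_u}$ with diagonal length-scale matrix $\Lambda_i$, it factorizes over the state and input coordinates; because ${z}_1$ and ${z}_2$ share the same ${u}$, the input factor is $1$, so $\mathsf{k}_i({z}_1, {z}_2) = \mathsf{k}_{x,i}({x}_1, {x}_2)$ and $\mathsf{k}_i({z}_j, {z}_j) = \alpha_i^2 = \mathsf{k}_{x,i}({x}_j, {x}_j)$, where $\mathsf{k}_{x,i}$ is the state-space kernel \req{kx}. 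Substituting, the right-hand side equals $\mathsf{k}_{x,i}({x}_1, {x}_1) - 2\mathsf{k}_{x,i}({x}_1, {x}_2) + \mathsf{k}_{x,i}({x}_2, {x}_2) = d_i({x}_1, {x}_2)^2$ by the definition \req{kernelmetric}. Combining with the previous display gives $|f_i({x}_1, {u}) - f_i({x}_2, {u})| \le b_i\, d_i({x}_1, {x}_2)$, as claimed.

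The inner-product manipulations are routine. The only place that genuinely needs verification is the factorization of the SE kernel over the $({x},{u})$-coordinates and the check that, with the input held fixed, it matches the construction of $d_i$ in \req{kernelmetric}--\req{kx} (i.e.\ that $d_i$ is built precisely from the restriction $\mathsf{k}_{x,i}$ of $\mathsf{k}_i$ to the state space). I do not anticipate a real obstacle; the statement is the standard Lipschitz-type estimate for RKHS functions, specialized to the SE kernel.
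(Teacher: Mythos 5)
Your proof is correct and follows essentially the same route as the paper: the paper's proof invokes the standard RKHS Lipschitz estimate $|f_i(z_1)-f_i(z_2)|\le b_i\sqrt{\mathsf{k}_i(z_1,z_1)-2\mathsf{k}_i(z_1,z_2)+\mathsf{k}_i(z_2,z_2)}$ by citation (Lemma~4.28 of \cite{hearst1998support}), whereas you derive it explicitly from the reproducing property and Cauchy--Schwarz, and then both arguments conclude identically by factorizing the SE kernel over the state and input coordinates so that $\mathsf{k}_i(z_1,z_2)=\mathsf{k}_{x,i}(x_1,x_2)$ when the input is held fixed, recovering $d_i(x_1,x_2)$ from \req{kernelmetric}. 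The factorization step you flagged as needing care is exactly the substantive computation in the paper's proof, and your treatment of it is correct.
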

For the proof of \rlem{continuity}, see \rapp{continuityproof}. 
Moreover, the following inclusion relation based on the kernel metric will also be useful in the theoretical analysis provided later in this paper. 
\begin{lem}\label{inoutlem}
\normalfont
Given $r_i \in [0, \sqrt{2}\alpha_i]$, $i \in \mathbb{N}_{1: n_x}$, let $\mathcal{X}_1, \mathcal{X}_2 \subset \mathbb{R}^{n_x}$ be any two sets satisfying 
\begin{align}\label{inoutlemeq}
    \mathcal{X}_1 \subseteq \bigcap_{i = 1}^{n_x}\mathsf{Int}_{d_i}(\mathcal{X}_2; r_i). 
\end{align}
Then, for all $x_1, x_2 \in \mathbb{R}^{n_x}$ with $d_i(x_1, x_2) \le r_i$ for all $i \in \mathbb{N}_{1: n_x}$, it follows that $x_1 \in \mathcal{X}_1 \implies x_2 \in \mathcal{X}_2$. 
\qedwhite
\end{lem}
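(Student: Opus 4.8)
The plan is to prove the implication purely by unwinding the definitions of $\mathsf{Int}_{d_i}(\cdot;\cdot)$ and of the metric ball $\mathcal{B}_{d_i}(\cdot;\cdot)$; no estimation is really required. First I would fix $x_1, x_2 \in \mathbb{R}^{n_x}$ with $d_i(x_1,x_2) \le r_i$ for all $i \in \mathbb{N}_{1:n_x}$ and assume $x_1 \in \mathcal{X}_1$. By the hypothesis \req{inoutlemeq}, $x_1$ then lies in $\mathsf{Int}_{d_i}(\mathcal{X}_2; r_i)$ for every $i$; to conclude it already suffices to use a single index, say $i=1$.

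Next I would invoke the definition of the inflated interior from the notation section, namely $\mathsf{Int}_{d_1}(\mathcal{X}_2; r_1) = \{x \in \mathcal{X}_2 : \mathcal{B}_{d_1}(x; r_1) \subseteq \mathcal{X}_2\}$ with $\mathcal{B}_{d_1}(x; r_1) = \{x' \in \mathbb{R}^{n_x} : d_1(x, x') \le r_1\}$. From $x_1 \in \mathsf{Int}_{d_1}(\mathcal{X}_2; r_1)$ we obtain $\mathcal{B}_{d_1}(x_1; r_1) \subseteq \mathcal{X}_2$, and since $d_1(x_1, x_2) \le r_1$ we have $x_2 \in \mathcal{B}_{d_1}(x_1; r_1)$, hence $x_2 \in \mathcal{X}_2$, which is the assertion.

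Finally I would add a short remark on the role of the range constraint $r_i \in [0, \sqrt{2}\alpha_i]$: from \req{kx} one has $\mathsf{k}_{x,i}(x,x) = \alpha_i^2$ and $\mathsf{k}_{x,i}(x_1,x_2) > 0$, so \req{kernelmetric} gives $d_i(x_1,x_2) = \sqrt{2\alpha_i^2 - 2\,\mathsf{k}_{x,i}(x_1,x_2)} < \sqrt{2}\alpha_i$. Thus $\sqrt{2}\alpha_i$ is the supremum of the kernel metric $d_i$, and restricting $r_i$ to this interval prevents the balls $\mathcal{B}_{d_i}(\cdot; r_i)$ from degenerating into the whole space, so that the inclusion \req{inoutlemeq} is a meaningful hypothesis; the argument itself does not otherwise use the bound.

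I do not expect a genuine obstacle in this lemma — it is essentially a definitional unpacking. The only point worth flagging is the quantifier bookkeeping: the hypothesis asks for $d_i(x_1,x_2)\le r_i$ to hold for all $i$, whereas a single coordinate already forces $x_2 \in \mathcal{X}_2$, so the statement is slightly stronger than strictly needed and is stated in the "for all $i$" form to match how it is later applied (coordinatewise) in the recursive feasibility analysis of \rsec{analysissec}.
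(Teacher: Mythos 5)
Your proposal is correct and follows essentially the same route as the paper's proof: unwind the definition of $\mathsf{Int}_{d_i}(\mathcal{X}_2;r_i)$ to get $\mathcal{B}_{d_i}(x_1;r_i)\subseteq\mathcal{X}_2$, then observe $x_2\in\mathcal{B}_{d_i}(x_1;r_i)$ from $d_i(x_1,x_2)\le r_i$. Your added observations (that a single index already suffices, and that $r_i\le\sqrt{2}\alpha_i$ only keeps the balls from degenerating) are accurate but not needed for the argument.
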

For the proof of \rlem{inoutlem}, see \rapp{inoutlemproof}. Roughly speaking, \req{inoutlemeq} means that $\mathcal{X}_1$ is contained in the set of all the states that are $r_i$ (or more) interior from the boundary of $\mathcal{X}_2$ with respect to the kernel metrics $d_i$ for all $i \in \mathbb{N}_{1: n_x}$. %Moreover, \rlem{inoutlem} means that if the state $x_1$ belongs to the set $\mathcal{X}_1$ satisfying \req{inoutlemeq} and the deviations between $x_1$ and $x_2$ with respect to the kernel metric $d_i$ are within $r_i$ for all $i \in \mathbb{N}_{1: n_x}$, we then have $x_2 \in \mathcal{X}_2$.

\subsection{Optimal control problem}\label{ocpsec}

Let us now formulate an optimal control problem. Let $\mathcal{D}_{N, i} = \{Z_N, Y_{N, i}\}$ for all $i \in \mathbb{N}_{1: n_x}$ be given training data set as defined in \rsec{learndsec}. Moreover, as described in \rsec{goalsec}, let $t_k$, $k \in \mathbb{N}_{\ge 0}$ with $t_0 = 0$ be the time instants when the OCPs are solved. Given $x(t_k)$ and a sequence of the control inputs $u(j | t_k) \in \mathcal{U}$, $j \in \mathbb{N}_{\geq 0}$, let $\hat{{x}}(j | t_k) \in \mathbb{R}^{n_x}$ for all $j \in \mathbb{N}_{\ge 0}$ with $\hat{{x}}(0 | t_k) = x(t_k)$ denote the \textit{predictive states} starting from $x(t_k)$ by applying $u(j | t_k)$, $j \in \mathbb{N}_{\geq 0}$ based on the GP mean \req{gpmean}: %letting $\hat{{x}}(j | t_k) = [\hat{{x}}_1 (j | t_k), \hat{{x}}_2 (j | t_k), \ldots,  \hat{{x}}_{n_x} (j | t_k)]^\mathsf{T}$ 
\begin{align}
\hat{x}_i(j + 1 | t_k) &= \hat{{f}}_i(\hat{{x}}(j | t_k), {{u}}(j | t); \mathcal{D}_{N, i}) \notag \\
&:= \mu_{f_i}(\hat{x} (j | t_k), {u}(j | t_k); \mathcal{D}_{N, i}),\  i \in \mathbb{N}_{1:n_x}
\end{align}
for all $j \in \mathbb{N}_{\ge 0}$, where $\hat{x}_i(j | t_k)$ is the $i$-th element of $\hat{x}(j | t_k)$. The overall prediction model of $f$ is then defined by 
\begin{align}\label{gpmodel}
\hat{{x}}(j + 1 | t_k) &= \hat{{f}}(\hat{{x}}(j | t_k), {{u}}(j | t_k); \mathcal{D}_{N}) \notag \\
&:= {\mu}_{f}(\hat{x} (j | t_k), {u}(j | t_k); \mathcal{D}_{N}),
\end{align}
where $\hat{{f}} = [\hat{f}_1, \ldots, \hat{f}_{n_x}]^\top$, $\mathcal{D}_N = \{\mathcal{D}_{N, i}\}_{i = 1}^{n_x}$, and ${ \mu}_{f} = [\mu_{f_1}, \ldots, \mu_{f_{n_x}}]^\top$. 

Based on the above notation, we define a cost function to be minimized as follows: 
\begin{align}\label{costfunc}
J_{H_k} ({{x}}(t_k), \bfmath{u}(t_k)) = &\sum_{j = 0}^{H_k - 1} h_s(\hat{{x}}(j | t_k), {{u}}(j | t_k)),
\end{align}
with $\hat{{x}}(0 | t_k) = x(t_k)$, where $H_k \in \mathbb{N}_{>0}$ is the prediction horizon that is selected at $t_k$, $\bfmath{u}(t_k) = \{{{u}}(0 | t_k), \ldots, {{u}}(H_k - 1 | t_k)\}$ is the set of control inputs to be optimized, and $h_s: \mathbb{R}^{n_x + n_u} \to \mathbb{R}_{\geq 0}$ is a given stage cost function. 
Here, the prediction horizon $H_k$ is not given constant for all $k \in \mathbb{N}_{\ge 0}$ but is selected adaptively such that it is decreasing with $k \in \mathbb{N}_{\ge 0}$ from the initial prediction horizon $H_0 = \bar{H} \in \mathbb{N}_{>0}$. More characterization of $H_k$ is provided in \rsec{etmpcalgsec}. 

The overall OCP solved at time $t_k$ is then formulated as follows:
\begin{align}\label{ocp}
&\min_{\bfmath{u}(t_k)}  \ J_{H_k} ({{x}}(t_k), \bfmath{u}(t_k))\notag \\ 
{\rm s.t.} \ \ \ \ \ \ & \hat{{x}}(j + 1 | t_k) = \hat{{f}}(\hat{{x}}(j | t_k), {{u}}(j | t_k); \mathcal{D}_N) \\
&\ u(j | t_k ) \in \mathcal{U}, \ \hat{{x}}(H_k | t_k ) \in \mathcal{X}_f, \notag \\
&\ \forall j = 0, \ldots, H_k - 1, \notag 
\end{align}
where the optimal control and the predictive state sequence are denoted by $\bfmath{u}^*(t_k) = \{{{u}}^*(0 | t_k), \ldots, {{u}}^*(H_k - 1 | t_k)\}$ and $\hat{\bfmath{x}}^*(t_k) = \{\hat{{x}}^*(0 | t_k), \ldots, \hat{{x}}^*(H_k| t_k)\}$, respectively. Moreover, the terminal set is given by ${\mathcal{X}}_f \subset \mathbb{R}^{n_x}$. In this paper, this terminal set $\mathcal{X}_f$ is constructed such that the following assumption is satisfied: 
\begin{assum}\label{terminalsetas}
\normalfont
Given the training dataset $\mathcal{D}_{N} = \{\mathcal{D}_{N, i}\}_{i = 1}^{n_x}$ and a set of scalars $\bar{\gamma}_i \in \mathbb{R}_{> 0}$ given by $\bar{\gamma}_i = \max_{x \in \mathbb{R}^{n_x}, u \in \mathcal{U}} d_i(0_{n_x}, \delta_N(x, u))$ for all $i \in \mathbb{N}_{1: n_x}$ with $\delta_N = [\delta_{N, 1}, \ldots, \delta_{N, n_x}]^\top$,  there exist a set $\mathcal{X}_S \subset \mathbb{R}^{n_x}$, a controller $C: \mathbb{R}^{n_x} \to 2^\mathcal{U}$, and a set of scalars $\gamma_i \in [\bar{\gamma}_i, \sqrt{2}\alpha_i)$ for all $i \in \mathbb{N}_{1:n_x}$, such that the following conditions are satisfied:  
\begin{enumerate}
    \item It follows that 
    \begin{align}\label{terminalsetdef}
        \mathcal{X}_f \subseteq \bigcap_{i = 1}^{n_x}\mathsf{Int}_{d_i}(\mathcal{X}_S; \gamma_i). 
    \end{align}
    \item For every $x \in \mathcal{X}_S$, there exists ${u} \in C({x})$ such that $\mathcal{F}(x, u; \mathcal{D}_N) \oplus \mathcal{W} \subseteq \mathcal{X}_f$, where $\mathcal{F}(x, u; \mathcal{D}_N) = \mathcal{F}_1(x, u; \mathcal{D}_N) \times \cdots \times \mathcal{F}_{n_x}(x, u; \mathcal{D}_N)$. 
\qedwhite
\end{enumerate}
\end{assum}

\ras{terminalsetas} implies the existence of $\mathcal{X}_S$ and the controller $C$ such that every state in $\mathcal{X}_S$ can be steered to $\mathcal{X}_f$ by applying the controller $C$. 
 A detailed way of how to construct $\mathcal{X}_f$ such that \ras{terminalsetas} is satisfied is given in \rsec{terminalsetsec}. 

\subsection{Event-triggered condition} \label{evtrigmpcsec}
Let us now provide an overview of the event-triggered condition, which determines the time instants $t_k$, $k \in \mathbb{N}_{\ge 0}$ for solving the OCP \req{ocp}. 
Let $\mathcal{D}_N$ be a training dataset as defined in the previous section and suppose that the OCP is solved at some $t_k$ to obtain the optimal control sequence $\bfmath{u}^*(t_k)$ and the corresponding predictive state sequence $\hat{\bfmath{x}}^*(t_k)$. 
Then, the next time instant $t_{k + 1}$ for solving the OCP is determined as follows:
\begin{align}\label{trigger}
t_{k + 1} = &\min_{j \in \mathbb{N}_{0: H_k}}\{t_k + j: \notag \\
&d_i({x}(t_k + j), \hat{{x}}^*(j | t_k)) > \xi_i^* (j ; x(t_k)), \exists i \in \mathbb{N}_{1: n_x}\} . 
\end{align}
In \req{trigger}, the thresholds $\xi_i^* (j ; x(t_k))$ for all $i \in \mathbb{N}_{1: n_x}$ and $j \in \mathbb{N}_{0: H_k}$ are defined as follows. First, the terminal ones $\xi_i^*(H_k ; x(t_k)) \in \mathbb{R}_{<0}$, $i \in \mathbb{N}_{1: n_x}$ are \textit{any} negative constants so that an event is guaranteed to be triggered within the prediction horizon $H_k$. 
For $j \in \mathbb{N}_{0: H_k - 1}$, $\xi_i^*(j ; x(t_k)) \in \mathbb{R}_{\ge 0}$ are defined as
\begin{align}\label{xidef}
    \xi_i^*(j ; x(t_k)) = \psi_i^*(j | t_k) - \frac{\hat{\delta}_{N, i}(j | t_k)}{b_i},
\end{align}
for all $i \in \mathbb{N}_{1: n_x}$, where $\hat{\delta}_{N, i}(j | t_k) = \delta_{N, i}(\hat{x}^*(j | t_k), {u}^*(j | t_k))$
% \begin{align}\label{deltadef}
% \hat{\delta}_{N, i}(j | t_k) = \beta_{N, i}  \sigma_{f_i}(\hat{x}^*(j | t_k), {u}^*(j | t_k); \mathcal{D}_{N, i}) + \sigma_{w_i},
% \end{align}
%for all $i \in \mathbb{N}_{1: n_x}$. 
and $\psi_i^*(j | t_k) \in \mathbb{N}_{> 0}$ are obtained by iteratively solving the following problem \textit{in backwards} from $j = H_k - 1$ to $0$:
\begin{align}\label{recopt}
&\psi_1^*(j | t_k), \ldots, \psi_{n_x}^*(j | t_k) =  \argmax_{\psi_i(j | t_k)} \prod_{i = 1}^{n_x} \psi_i(j | t_k) \notag \\
{\rm s.t.}\ \ & \|{b}\odot \psi(j | t_k) \|^2_{\Lambda_{x, i}^{-1}} \le c^2_{N, i}(j + 1 | t_k), \\
& \frac{\hat{\delta}_{N,i}(j | t_k)}{b_i} \le \psi_i(j | t_k) \le \sqrt{2} \alpha_i - \varsigma, \notag \\
&\ \ \ \ \ \ \ \ \ \ \ \ \ \ \ i = 1, \ldots, n_x, \notag 
\end{align}
where $b = [b_1, \ldots, b_{n_x}]^\top$, ${\psi} = [\psi_1, \ldots, \psi_{n_x}]^\top$, and $\varsigma > 0$ is an any given positive constant. Moreover, $c_{N, i}(j + 1 | t_k)$, $j \in \mathbb{N}_{0: H_k - 1}$ %in the right-hand side of the first constraint in \req{recopt} 
are defined as follows: 
\begin{align}\label{cdef1}
    c_{N, i}(H_k | t_k) = \sqrt{2 \log\left(\frac{2 \alpha_i^2}{2 \alpha_i^2 - \gamma_i^2}\right)}
\end{align}
 and 
\begin{align}\label{cdef2}
c_{N, i}(j + 1 | t_k) = \sqrt{2 \log\left(\frac{2 \alpha_i^2}{2 \alpha_i^2 - \psi_i^{*2}(j + 1|t_k) }\right)},
\end{align}
for all $j = H_k - 2, \ldots, 0$. 

Note that the optimization problem \req{recopt} is a \textit{geometric programming problem} \cite{boyd2007tutorial}, and thus it can be translated into a convex form and solved in polynomial time. As we will see in the theoretical analysis provided in \rsec{analysissec}, the event-triggered condition \req{trigger} designed above is given to fulfill \textit{recursive feasibility}, which means that the OCP is guaranteed to be feasible at $t_{k + 1}$ (for details, see \rsec{analysissec}). %That is, if feasibility of the OCP holds at $t_k$, then the next OCP solved at $t_{k + 1}$ that is determined according to \req{trigger} is guaranteed to be feasible. 
%The constraints in \req{recopt} are utilized to guarantee recursive feasibility and convergence towards the terminal set (for details, see \rsec{analysissec}), and 
The cost function in \req{recopt} has been here defined as a purpose to obtain $\psi _i(j | t_k)$ as large as possible, and it aims at enlarging the thresholds for the event-triggered condition $\xi^*_i(j ; x(t_k))$ (see \req{trigger} and \req{xidef}), so that it will enhance reducing the frequency of solving the OCPs. 

\begin{rem}
\normalfont
%The objective function of the optimization problem \req{recopt} is defined as the product of all $\psi_i$, $i \in \mathbb{N}_{1: n_x}$. %so that the optimal solution of $\psi^*_i$, $i \in \mathbb{N}_{1: n_x}$ become as large as possible. 
Another option for the objective function to be maximized as a purpose to obtain $\psi_i$, $i \in \mathbb{N}_{1: n_x}$ as large as possible is to take the total sum of $\psi_i$ for all $i \in \mathbb{N}_{1: n_x}$, i.e, $\sum_{i = 1}^{n_x} \psi_i$. In such a case, the optimization problem \req{recopt} is defined as a second-order cone programming (SOCP) \cite{boyd2004convex} which is also a convex optimization problem and can be solved in polynomial time. 
%In this way, the objective function can be set by the designer as appropriate.
\qedwhite
\end{rem}

\begin{comment}
\begin{rem}
\normalfont
While our approach requires solving an additional optimization problem \req{recopt} (in comparison to the periodic MPC schemes) for determining the thresholds to characterize the event-triggered strategy, we argue that the proposed approach still allows us to reduce the computational burden in comparison to the periodic MPC scheme, especially when the state-space exploration progresses and the accurate model for $f$ is obtained. For details, see \rrem{rem2}. \qedwhite
\end{rem}
\end{comment}

\subsection{Proposed event-triggered MPC algorithm}\label{etmpcalgsec}

\begin{algorithm}
\caption{Learning-based event-triggered MPC via iterative task}\label{itertask}
\begin{varwidth}[t]{0.95\linewidth} \textbf{Input:} $\mathcal{X}_{\rm init}$ (initial state set), $\bar{H}$ (initial prediction horizon), $\mathcal{D}_{N, i} = \{Z_N, Y_{N, i}\}$ for $i \in \mathbb{N}_{1: n_x}$ (initial training dataset), $\gamma_i$ for $i \in \mathbb{N}_{1: n_x}$ (a set of scalars); \end{varwidth}
\begin{algorithmic}[1]

\Repeat
    \smallskip
    \Statex \ \ \ \ \ \textbf{[Update phase]}
    \smallskip
    
    \State \begin{varwidth}[t]{0.9\linewidth} 
    Using the dataset $\mathcal{D}_{N} = \{\mathcal{D}_{N,i}\}^{n_x}_{i=1}$, update the prediction model \req{gpmodel} and construct the terminal set $\mathcal{X}_f$ (together with $\mathcal{X}_S$ and $C$) such that \ras{terminalsetas} is satisfied; for details, see \rsec{terminalsetsec};\label{safetygameline} \end{varwidth}

    \smallskip
    \Statex \ \ \ \ \ \textbf{[Execution phase]}
    \smallskip
    
    \State \begin{varwidth}[t]{0.9\linewidth} Select the initial state $x(0)$ (possibly randomly) from $\mathcal{X}_{\rm init}$;\label{initialstateline}\end{varwidth}
    
    \State $k \leftarrow 0$, $t_k \leftarrow 0$, $H_k\leftarrow \bar{H}$; \label{initialtimeline}
    \State $\widetilde{\mathcal{D}}_i \leftarrow \varnothing$, $\forall i \in \mathbb{N}_{n_x}$; \label{initialstateline2}

    \Repeat \label{mpcapplyline}
        \State \begin{varwidth}[t]{0.85\linewidth} Given $\mathcal{X}_f$, $H_k$, $x(t_k)$, and using the prediction model \req{gpmodel}, solve the OCP \req{ocp} to obtain $\bfmath{u}^*(t_k)$, $\hat{\bfmath{x}}^*(t_k)$;\end{varwidth}\label{solveopcline}
        \If{Solve \req{recopt} and it is feasible}\label{triggersolline}
            
            \State $j \leftarrow 0$;
            
            \Repeat\label{applyinputlinestart}
                \State \begin{varwidth}[t]{0.7\linewidth} Apply ${{u}}^*(j | t_k)$ to the system \req{realdynamics} and observe the next state ${x}(t_k + j + 1)$; \label{applyinputline}\end{varwidth}
            
                \State \begin{varwidth}[t]{0.75\linewidth}
                $\widetilde{\mathcal{D}}_i \leftarrow \widetilde{\mathcal{D}}_i \cup \{{z}(t_k + j), x_i(t_k + j + 1)\}$ for all $i \in \mathbb{N}_{1: n_x}$
                (Add data points);\label{plantdataaddline}\end{varwidth}
        
                \State $j \leftarrow j + 1$;\label{datanumaddline}
            
            \Until{$t_{k + 1}$ is triggered from \req{trigger};}\label{triggerline}
            
            \State $m_k \leftarrow t_{k + 1} - t_k$;\label{intereventtimeline}
        
            \State \begin{varwidth}[t]{0.8\linewidth}$H_{k + 1} \leftarrow H_k - (m_k - 1)$;\label{decreasehorizon}\end{varwidth}
    
            \State $k \leftarrow k + 1$ ;\label{proceedtimeline}
            
        \Else{ (i.e., \req{recopt} is not feasible)}\label{notfeasibleline}
        
        \State \begin{varwidth}[t]{0.85\linewidth} Apply all inputs ${\bfmath{u}}^*(t_k)$ to the system \req{realdynamics} and observe the states $x_i(t_k + j)$, $j \in \mathbb{N}_{1:H_k}$. 
        Then, add all data points $\{z(t_k + j), x_i(t_k + j + 1)\}$ to $\widetilde{\mathcal{D}}_i$, i.e., $\widetilde{\mathcal{D}}_i \leftarrow \widetilde{\mathcal{D}}_i \cup \{z(t_k + j), x_i(t_k + j + 1)\}$
        for all $i \in \mathbb{N}_{1: n_x}$ and $j \in \mathbb{N}_{0: H_k - 1}$; \end{varwidth} \label{allinputapplyline}
        
        \State \begin{varwidth}[t]{0.8\linewidth} ${\mathcal{D}}_{N', i} \leftarrow {\mathcal{D}}_{N, i} \cup \widetilde{\mathcal{D}}_i$ for all $i \in \mathbb{N}_{1: n_x}$ ($N'$ denotes the total number of training data points after the update); \end{varwidth}\label{addalldatasline}
        
        \smallskip
        \smallskip
        \State \textbf{break} (Proceed to the next iteration);\label{triggersollineend}
            
        \EndIf
    % \Until{{${x}(t_k) \in \mathcal{X}_S$;}}\label{notdecreasehorizonline}
    
    \Until{$x(t_k) \in \mathcal{X}_S$}\label{notdecreasehorizonline}
    
\Until{${x}(t_k) \in \mathcal{X}_S$;}\label{compitertask}
\State \begin{varwidth}[t]{0.9\linewidth} Switch to the controller $C$ to drive the state to $\mathcal{X}_f$ and stay therein for all future times;\end{varwidth}\label{switchconline}
\end{algorithmic}
\end{algorithm}

In this section, we propose a detailed algorithm of the event-triggered MPC strategy. 
Before providing the algorithm, we would like to note first that it may be possible that the problem for finding $\psi^* _i (j|t_k)$, $i\in\mathbb{N}_{1:n_x}$ \req{recopt} becomes infeasible for some $j$ and $t_k$, so that we cannot set the thresholds for the event-triggered strategy and thus have to stop solving the OCP afterwards. As will be clearer later, such infeasibility implies that the feasibility of the OCP after $t_{k}$ is not guaranteed. Intuitively, this could happen especially at the beginning of execution, where the variance computed by the GP model is large in a wide region of the state space. 
%so that we cannot set the thresholds for the event-triggered strategy and thus have to stop solving the OCP afterwards. 
%We argue that such a case should be accepted especially at the beginning of execution, where the variance (or, uncertainty) is large in a wide region of the state space. 
Nevertheless, we will show later in this paper that \req{recopt} becomes feasible when the variance becomes small enough (for details, see \rsec{analysissec}).   

In view of the above, in this paper we provide an event-triggered MPC algorithm based on an \textit{iterative task}, in which we {reset} a state trajectory once the problem \req{recopt} becomes infeasible. 
The overall event-triggered MPC algorithm is illustrated in \ralg{itertask}, and the details are described as follows. 
The inputs for the algorithm are $\mathcal{X}_{\rm init}$ (initial state set), $\bar{H}$ (initial prediction horizon), $\mathcal{D}_{N, i} = \{Z_N, Y_{N, i}\}$ for $i \in \mathbb{N}_{1: n_x}$ (initial training dataset), and $\gamma_i$ for $i \in \mathbb{N}_{1: n_x}$ (a set of scalars). Here, it is noted that the initial training dataset $\mathcal{D}_{N, i}$ may be obtained in several ways, such as by employing an expert (i.e., we let an expert control the system for some time to collect the initial training data), or by applying a certain model-free controller (e.g., PID controller).  

In the main phase, the algorithm repeatedly executes the update task (\rline{safetygameline}) and execution task (\rline{initialstateline}-\rline{notdecreasehorizonline}) until the state trajectory reaches the set $\mathcal{X}_S$. 
In Update phase, the prediction model \req{gpmodel} is updated and the terminal set $\mathcal{X}_f$ is constructed such that \ras{terminalsetas} is satisfied (together with the set $\mathcal{X}_S$ and the controller $C$). For a detailed procedure of how to construct the terminal set, see \rsec{terminalsetsec}. 
The algorithm proceeds by Execution phase. 
First, we initialize the state, time, prediction horizon, and the training dataset obtained for the current execution phase 
(\rline{initialstateline}--\rline{initialstateline2}). 
Then, for each $t_k$, $k\in\mathbb{N}_{\geq 0}$, the OCP \req{ocp} is solved to obtain the optimal control sequence $\bfmath{u}^*(t_k)$ and the corresponding state sequence $\hat{\bfmath{x}}^*(t_k)$ (\rline{solveopcline}). 
Then, the optimization problem \req{recopt} is solved, and if it is feasible, the optimal control inputs $u^*(j | t_k)$ are applied to the system \req{realdynamics} and collect the training data $\{{z}(t_k + j), x_i(t_k + j + 1)\}$ for $i \in \mathbb{N}_{1: n_x}$ until the next OCP time $t_{k+1}$ that is determined from \req{trigger}  (\rline{applyinputlinestart}--\rline{triggerline}). As shown in \rline{intereventtimeline} and \rline{decreasehorizon}, the OCP at the next time $t_{k+1}$ is solved with the new prediction horizon $H_{k + 1} = H_{k} - (m_k - 1)$, where $m_k$ is the inter-event time $m_k = t_{k + 1} - t_k$. 
Note in particular that if the inter-even time is greater than 1 ($m_k >1$), the prediction horizon decreases by $m_k-1$. 
Note also that the controller adds the training data $\widetilde{D}_i$, $i\in\mathbb{N}_{1:n_x}$ in the Execution phase, but the prediction model and the terminal set are not updated with the new training data until the algorithm proceeds to the Update phase in the next iteration. 
If the optimization problem \req{recopt} is not feasible (\rline{notfeasibleline}), the optimal control inputs are applied until $t_k + H_k$ and add the training data in $\widetilde{D}_i$ (\rline{addalldatasline}). Then, all the training data collected so far at the current iteration $\widetilde{D}_i$, is added to $\mathcal{D}_{N, i}$ for all $i \in \mathbb{N}_{1: n_x}$. 
Then, it moves on to the next iteration and goes back to the Update phase (\rline{triggersollineend}). 

Finally, once the state trajectory enters $\mathcal{X}_S$, the algorithm terminates the iterative task and switches to the controller $C$ to drive the state to $\mathcal{X}_f$, and stay therein for all future times (\rline{compitertask}--\rline{switchconline}). 

\begin{rem}\label{rem2}
\normalfont
While our approach requires solving an additional optimization problem \req{recopt} (in comparison to the periodic MPC schemes) for determining the thresholds to characterize the event-triggered strategy, we argue that the proposed approach still allows us to reduce the computational burden in comparison to the periodic MPC scheme in the following sense. 
As the iteration progresses and the number of the training dataset increases, the computational burden to compute predictions according to the GP model \req{gpmodel} increases. 
Hence, if we employ the periodic MPC, the computational burden to solve the OCPs would significantly increase as the iteration progresses. 
On the other hand, our approach allows us to reduce the number of solving the OCPs as the iteration progresses; the more we get training data, the more we can decrease the number of solving the OCPs. Note that the geometric programming \req{recopt} can be solved in polynomial time and its computational complexity does \textit{not} depend on the size of the training dataset (i.e., the computational burden of \req{recopt} remain the same for all iterations.) 
Hence, even though our approach requires solving the geometric programming problem, the computational benefit of our approach is still more significant than the periodic MPC especially as the iteration progresses. For this clarification, see our numerical experiment of \rsec{simsec}. 
\qedwhite
\end{rem}

\begin{rem}
\normalfont
Our event-triggered MPC algorithm applies a \textit{dual-mode strategy}, where once the state trajectory enters $\mathcal{X}_S$, the controller is switched to $C$ in \ras{terminalsetas} to drive the state into the terminal set $\mathcal{X}_f$ and stays therein for all future times. 
Hence, it is noted that our proposed approach does not control the system with the event-triggered strategy once the state trajectory enters $\mathcal{X}_S$ (i.e., the control actions in the set $\mathcal{X}_S$ need to be updated every time step). Note that the controller $C$ satisfying \ras{terminalsetas} has been constructed in an \textit{offline} fashion, which means that there is no need to solve an optimization problem to determine the control inputs. Therefore, it is argued that the state of the system can be controlled in the terminal region without heavy computation using the controller $C$. 
%we argue that using the controller $C$, the state of the system is able to stay in the terminal region without a heavy computation for all future times. 
\qedwhite
\end{rem}

\section{Analysis}\label{analysissec}
In this section, we analyze the recursive feasibility of the OCP \req{ocp} (\rsec{feasibilitysec}) and the convergence of the closed-loop system under \ralg{itertask} (\rsec{stabilitysec}). 

\subsection{Recursive feasibility}\label{feasibilitysec}
Let us analyze the recursive feasibility of the OCP \req{ocp}. 
First, note that each OCP time $t_k$, $k \in \mathbb{N}_{\geq 0}$ is determined according to the event-triggered condition \req{trigger} whose parameters are given by solving the optimization problem \req{recopt}. 
As mentioned previously in \rsec{etmpcalgsec}, even though the OCP at $t_k$ is feasible, it is possible that the event-triggered condition \req{trigger} becomes \textit{ill}-defined due to an infeasibility of \req{recopt}, and this could happen when the variance computed by the GP model is large in a wide region of the state space. 
In order to guarantee the feasibility of \req{recopt}, we need to provide an assumption that the training dataset $\mathcal{D}_N = \{\mathcal{D}_{N, i}\}_{i = 1}^{n_x}$ is \textit{sufficiently collected} in the sense that the variances (uncertainties) $\delta_{N, i}(x, u)$ for all $i \in \mathbb{N}_{1: n_x}$ become small enough, as well as that the upper bound of the RKHS is selected small enough. 

\begin{assum}\label{datasetas}
\normalfont
Given the training dataset $\mathcal{D}_N = \{\mathcal{D}_{N, i}\}_{i = 1}^{n_x}$, we have $\delta_{N, i}(x, u) < \sqrt{2} \alpha_i b_i$ for all $x \in \mathbb{R}^{n_x}$, $u \in \mathcal{U}$, and $i \in \mathbb{N}_{1: n_x}$. In addition, {$b_i \le \min_{\ell \in \mathbb{N}_{1: n_x}}(\sqrt{\lambda_{\ell, i}} / \alpha_\ell)$}.
\end{assum}
%\ras{datasetas} implies that $\mathcal{D}_{N, i}$ for each $i \in \mathbb{N}_{1: n_x}$ is given such that the uncertainty $\delta_{N, i}(x, u)$ is bounded by a certain threshold characterized by the hyper-parameters of the SE kernel function $\mathsf{k}_i$. 
Under the above assumptions, we can show the feasibility of \req{recopt}:
\begin{lem}\label{recoptfeasiblelem}
\normalfont
Let Assumptions~1, 2 and 3 hold and suppose that the OCP \req{ocp} at $t_k$ is feasible. 
Then, the optimization problem \req{recopt} at $t_k$ is also feasible. 
\qedwhite
\end{lem}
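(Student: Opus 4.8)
The plan is to establish feasibility of \eqref{recopt} by backward induction on the stage index, from $j=H_k-1$ down to $j=0$. The preparatory observation is a rewriting of the quadratic constraints in \eqref{recopt} through the kernel metrics: from \eqref{kernelmetric}--\eqref{kx} one has $d_i(x_1,x_2)^2 = 2\alpha_i^2\bigl(1-\exp(-\tfrac12\|x_1-x_2\|^2_{\Lambda_{x,i}^{-1}})\bigr)$, so for any $v\in\mathbb{R}^{n_x}$ and any $\rho\in[0,\sqrt2\alpha_i)$ one has $\|v\|^2_{\Lambda_{x,i}^{-1}}\le 2\log\!\bigl(\tfrac{2\alpha_i^2}{2\alpha_i^2-\rho^2}\bigr)\iff d_i(0_{n_x},v)\le\rho$. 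Taking $v=b\odot\psi(j|t_k)$ and recalling \eqref{cdef1}--\eqref{cdef2}, the $i$-th constraint of \eqref{recopt} at stage $j$ is \emph{equivalent} to $d_i\bigl(0_{n_x},b\odot\psi(j|t_k)\bigr)\le\rho_i(j)$, where $\rho_i(H_k-1)=\gamma_i$ and $\rho_i(j)=\psi_i^*(j+1|t_k)$ for $j\le H_k-2$; both are well defined since $\gamma_i<\sqrt2\alpha_i$ by \ras{terminalsetas} and since the box constraint in \eqref{recopt} forces $\psi_i^*(\cdot|t_k)\le\sqrt2\alpha_i-\varsigma$.

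Since $d_i(0_{n_x},\cdot)$ is coordinatewise nondecreasing and $b,\psi\ge0$, the ``lower-left corner'' $\psi_i(j|t_k)=\hat{\delta}_{N,i}(j|t_k)/b_i$ minimizes the left-hand side of every constraint $d_i(0_{n_x},b\odot\psi(j|t_k))\le\rho_i(j)$ simultaneously; hence \eqref{recopt} at stage $j$ is feasible iff this corner is. At the corner $b\odot\psi(j|t_k)=\delta_N(\hat{x}^*(j|t_k),u^*(j|t_k))$, so by the definition of $\bar{\gamma}_i$ in \ras{terminalsetas} we get $d_i(0_{n_x},b\odot\psi(j|t_k))\le\bar{\gamma}_i$ for every $i$. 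Therefore feasibility at stage $j$ reduces to: (i) the box is nonempty, i.e.\ $\hat{\delta}_{N,i}(j|t_k)/b_i\le\sqrt2\alpha_i-\varsigma$ for all $i$; and (ii) $\bar{\gamma}_i\le\rho_i(j)$ for all $i$. Claim (i) follows from \ras{datasetas}, which gives $\delta_{N,i}(x,u)<\sqrt2\alpha_ib_i$ uniformly in $(x,u)$ (using $\sigma_{f_i}(x,u;\mathcal{D}_{N,i})\le\alpha_i$), provided $\varsigma$ is small enough. For the base case $j=H_k-1$, claim (ii) reads $\bar{\gamma}_i\le\gamma_i$, which is precisely the range requirement $\gamma_i\in[\bar{\gamma}_i,\sqrt2\alpha_i)$ of \ras{terminalsetas}; this settles feasibility at stage $H_k-1$.

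The inductive step is where the work lies. For $j\le H_k-2$, claim (ii) at stage $j$ requires $\rho_i(j)=\psi_i^*(j+1|t_k)\ge\bar{\gamma}_i$ for all $i$, which is not immediate, so I would strengthen the induction to assert, at every stage (including $j=H_k-1$), that \eqref{recopt} is feasible \emph{and} that its maximizer obeys $\psi_i^*(j|t_k)\ge\bar{\gamma}_i$ for all $i$. Feasibility at a stage is then supplied by the corner argument above, with claim (ii) coming from $\gamma_i\ge\bar{\gamma}_i$ when $j=H_k-1$ and from the already-proved part of the hypothesis at stage $j+1$ otherwise. The substantive point, identical at the base stage and at every later stage, is to rule out that the maximizer $\psi^*(j|t_k)$ of $\prod_i\psi_i(j|t_k)$ has a coordinate strictly below $\bar{\gamma}_i$: if $\psi_{i_0}^*(j|t_k)<\bar{\gamma}_{i_0}$, then raising that single coordinate to $\bar{\gamma}_{i_0}$ would strictly increase the objective, hence must destroy feasibility. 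The box upper bound is not the culprit ($\bar{\gamma}_{i_0}<\sqrt2\alpha_{i_0}$); the culprit can only be one of the coupled metric constraints $d_i(0_{n_x},b\odot\psi(j|t_k))\le\rho_i(j)$, $i=1,\dots,n_x$. The main obstacle is thus to bound, uniformly in $i$, the increase these constraints incur when the $i_0$-th coordinate of $b\odot\psi(j|t_k)$ is enlarged --- the increment in $\|b\odot\psi(j|t_k)\|^2_{\Lambda_{x,i}^{-1}}$ being $(b_{i_0}^2/\lambda_{i,i_0})(\bar{\gamma}_{i_0}^2-\psi_{i_0}^{*2}(j|t_k))$ --- and to show it is absorbed by the slack that $\rho_i(j)\ge\bar{\gamma}_i$ leaves available; this is exactly where the hyperparameter bound $b_i\le\min_{\ell}\sqrt{\lambda_{\ell,i}}/\alpha_\ell$ and the bound $\delta_{N,i}<\sqrt2\alpha_ib_i$ of \ras{datasetas} must be used, together with convexity of the stage-$j$ feasible region (an intersection of $\Lambda_{x,i}^{-1}$-ellipsoids with a box in the positive orthant). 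With this in place the strengthened hypothesis propagates down to $j=0$, and feasibility of \eqref{recopt} at $t_k$ follows.
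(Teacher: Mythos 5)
Your reduction of \req{recopt} to its ``lower-left corner'' and your translation of the quadratic constraints into the kernel-metric conditions $d_i(0_{n_x}, b\odot\psi(j|t_k))\le\rho_i(j)$ are both correct and match the paper's first step (computing the infimum $\|b\odot\psi(j|t_k)\|^2_{\Lambda_{x,i}^{-1}}\ge\|\hat{\delta}_N(j|t_k)\|^2_{\Lambda_{x,i}^{-1}}$ over the box). The base case $j=H_k-1$ is also handled the same way, via $\bar{\gamma}_i\le\gamma_i$ from \ras{terminalsetas}. The problem is the inductive step. You correctly isolate that everything hinges on a coordinatewise lower bound $\psi_i^*(j+1|t_k)\ge\bar{\gamma}_i$ on the \emph{maximizer} of the stage-$(j{+}1)$ problem, but you do not prove it: the paragraph ends with ``this is exactly where the hyperparameter bound \dots must be used'' and ``with this in place the strengthened hypothesis propagates,'' which is an acknowledgement of the missing argument rather than the argument itself. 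That bound is the entire technical content of the lemma; without it the proof restates what must be shown. Moreover, the perturbation scheme you sketch (raise a deficient coordinate $\psi_{i_0}$ to $\bar{\gamma}_{i_0}$ and argue feasibility is preserved) is delicate: increasing $\psi_{i_0}$ increases \emph{every} weighted norm $\|b\odot\psi\|^2_{\Lambda_{x,i}^{-1}}$, $i\in\mathbb{N}_{1:n_x}$, and the slack available in the $i$-th constraint is governed by the a priori unknown gap between $d_i(0_{n_x},b\odot\psi^*(j|t_k))$ and $\rho_i(j)$, so it is not clear the exchange goes through without exactly the quantitative estimate you defer.

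The paper closes this gap by a different mechanism: it never locates the optimizer directly, but proves that the threshold sequence is monotone, $c_{N,i}(\ell+1|t_k)\le c_{N,i}(\ell|t_k)$, by bounding $c^2_{N,i}(\ell|t_k)-c^2_{N,i}(\ell+1|t_k)$ from below using the constraint satisfied by $\psi^*(\ell|t_k)$ and then establishing the pointwise inequality $h_i(e)=e_i^2+2\alpha_i^2\exp(-\tfrac{1}{2}\|b\odot e\|^2_{\Lambda_{x,i}^{-1}})-2\alpha_i^2>0$ (equivalently $d_i(0_{n_x},b\odot e)<e_i$) for all $e\in\mathbb{R}^{n_x}_{>0}$, via a convexity argument in which the condition $b_i\le\min_{\ell}\sqrt{\lambda_{\ell,i}}/\alpha_\ell$ of \ras{datasetas} actually enters. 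Monotonicity then gives $c_{N,i}(j+1|t_k)\ge c_{N,i}(H_k|t_k)$ for every $j$, so the uniform corner bound $d_i(0_{n_x},\hat{\delta}_N(j|t_k))\le\gamma_i$ suffices at every stage simultaneously. To complete your version you would need to either prove the coordinatewise lower bound on the maximizer or switch to this monotonicity route; as written, the proposal has a genuine gap at its central step.
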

For the proof of \rlem{recoptfeasiblelem}, see \rapp{recoptfeasiblelemproof}. \rlem{recoptfeasiblelem} shows that the feasibility of the OCP implies the feasibility of \req{recopt}. 
%Roughly speaking, \rlem{recoptfeasiblelem} means that, given the training dataset such that \ras{rkhsas}, 2, and 3 are fulfilled, the optimization problem \req{recopt} is always feasible when the OCP is feasible. 
Using \rlem{recoptfeasiblelem}, we can now show that the event-triggered MPC under the event-triggered condition \req{trigger} achieves recursive feasibility: 
%let us now state our main result that the event-triggered condition \req{trigger} achieves recursive feasibility:

\begin{thm}\label{feasiblethm}
\normalfont
%Given the training dataset $\mathcal{D}_N = \{\mathcal{D}_{N, i}\}_{i = 1}^{n_x}$, and 
Let \ras{rkhsas}, 2 and 3 hold.
Suppose that the OCP \req{ocp} at time $t_k$ ($k \in \mathbb{N}_{\ge 0}$) is feasible.
Moreover, suppose that the next OCP time $t_{k + 1}$ % (together with the inter-event time $m_k = t_{k + 1} - t_k$) 
is determined according to the event-triggered condition \req{trigger}.
Then, the OCP at $t_{k + 1}$ is feasible with the prediction horizon $H_{k + 1} = H_k - m_k + 1$, where $m_k = t_{k + 1} - t_k$. 
\qedwhite
\end{thm}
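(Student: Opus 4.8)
The plan is to follow the classical recursive-feasibility template: exhibit an explicit feasible candidate for the OCP \req{ocp} at $t_{k+1}$, obtained by shifting the previous optimal input sequence and appending one step of the safety controller $C$ from \ras{terminalsetas}, and then check that the decreasing-horizon rule $H_{k+1}=H_k-m_k+1$ makes the bookkeeping close exactly. As a preliminary observation, feasibility of \req{ocp} at $t_k$ forces \req{recopt} to be feasible at $t_k$ by \rlem{recoptfeasiblelem}, so the thresholds $\xi^*_i(j;x(t_k))$ and hence the event rule \req{trigger} are well-defined; moreover, since the terminal thresholds $\xi^*_i(H_k;x(t_k))$ are negative, \req{trigger} is guaranteed to trigger within the horizon, so $1\le m_k\le H_k$ and $H_{k+1}\ge 1$.

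Concretely, I would take as candidate for $t_{k+1}$ the input sequence $\bfmath{u}(t_{k+1})=\{u^*(m_k|t_k),\dots,u^*(H_k-1|t_k),\,v\}$, i.e. the $H_k-m_k$ tail inputs of $\bfmath{u}^*(t_k)$ followed by one extra input $v$ to be chosen from $C$; this sequence has exactly $H_{k+1}=H_k-m_k+1$ elements. The input constraints are immediate — the tail inputs were already feasible at $t_k$ and $v\in C(\cdot)\subseteq\mathcal{U}$ — so everything reduces to the terminal constraint $\hat{x}(H_{k+1}|t_{k+1})\in\mathcal{X}_f$. The idea is to show that, in every kernel metric $d_i$, the candidate's predictive trajectory $\hat{x}(l|t_{k+1})$ stays close to the tail $\hat{x}^*(m_k+l|t_k)$ of the previous optimal prediction, and in particular that $\hat{x}(H_k-m_k|t_{k+1})$ lies in $\mathcal{X}_S$, whence one $C$-step lands in $\mathcal{X}_f$.

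The heart of the argument is a chain of one-step estimates. First the ``bridge'' step at the trigger instant: since \req{trigger} did not fire at $j=m_k-1$, we have $d_i(x(t_k+m_k-1),\hat{x}^*(m_k-1|t_k))\le\xi^*_i(m_k-1;x(t_k))$ for all $i$. Writing $x(t_k+m_k)=f(x(t_k+m_k-1),u^*(m_k-1|t_k))+w(t_k+m_k-1)$ and $\hat{x}^*(m_k|t_k)=\mu_f(\hat{x}^*(m_k-1|t_k),u^*(m_k-1|t_k))$ and inserting the term $f(\hat{x}^*(m_k-1|t_k),\cdot)$, the triangle inequality together with \rlem{continuity}, \rlem{errorbound} (via \req{ndeltadef}), $|w_i|\le\sigma_{w_i}$, and the definition \req{xidef} of $\xi^*_i$ reduces to the componentwise bound $|x_i(t_{k+1})-\hat{x}_i^*(m_k|t_k)|\le b_i\psi^*_i(m_k-1|t_k)$, the model-error contributions being absorbed by the definition of $\xi^*_i$. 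Squaring, weighting by $\Lambda_{x,i}^{-1}$, and using the \req{recopt}-constraint $\|b\odot\psi^*(m_k-1|t_k)\|_{\Lambda_{x,i}^{-1}}^2\le c^2_{N,i}(m_k|t_k)$, together with the identity $\|z-z'\|_{\Lambda_{x,i}^{-1}}^2=2\log\!\big(2\alpha_i^2/(2\alpha_i^2-d_i(z,z')^2)\big)$ and the monotonicity of its right-hand side in $d_i^2$, the definitions \req{cdef1}--\req{cdef2} of $c_{N,i}$ convert this into $d_i(x(t_{k+1}),\hat{x}^*(m_k|t_k))\le\psi^*_i(m_k|t_k)$ when $m_k<H_k$, and $\le\gamma_i$ when $m_k=H_k$. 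Exactly the same mechanism — now with the continuity of the GP mean model $\hat{f}=\mu_f$ in place of $f$, so that no noise or model-error term enters — propagates the bound one tail step at a time: by induction $d_i(\hat{x}(l|t_{k+1}),\hat{x}^*(m_k+l|t_k))\le\psi^*_i(m_k+l|t_k)$ for $l<H_k-m_k$, and $\le\gamma_i$ at $l=H_k-m_k$. Thus $d_i(\hat{x}(H_k-m_k|t_{k+1}),\hat{x}^*(H_k|t_k))\le\gamma_i$ for all $i$; since $\hat{x}^*(H_k|t_k)\in\mathcal{X}_f\subseteq\bigcap_i\mathsf{Int}_{d_i}(\mathcal{X}_S;\gamma_i)$ by condition~1 of \ras{terminalsetas}, \rlem{inoutlem} gives $\hat{x}(H_k-m_k|t_{k+1})\in\mathcal{X}_S$. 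Finally condition~2 of \ras{terminalsetas} provides $v\in C(\hat{x}(H_k-m_k|t_{k+1}))\subseteq\mathcal{U}$ with $\mathcal{F}(\hat{x}(H_k-m_k|t_{k+1}),v;\mathcal{D}_N)\oplus\mathcal{W}\subseteq\mathcal{X}_f$; since the center $\mu_f(\hat{x}(H_k-m_k|t_{k+1}),v;\mathcal{D}_N)$ of the box $\mathcal{F}(\hat{x}(H_k-m_k|t_{k+1}),v;\mathcal{D}_N)$ belongs to that box and $0\in\mathcal{W}$, we conclude $\hat{x}(H_{k+1}|t_{k+1})=\hat{f}(\hat{x}(H_k-m_k|t_{k+1}),v;\mathcal{D}_N)\in\mathcal{X}_f$. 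Hence the candidate meets all constraints of \req{ocp} with horizon $H_{k+1}$, which proves the claim.

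I expect the main obstacle to be precisely this propagation chain — carrying componentwise bounds coming out of \rlem{continuity} and \rlem{errorbound} through the $\Lambda_{x,i}^{-1}$-weighted norms and the logarithmic definition of $c_{N,i}$ so that at each stage the resulting kernel-metric bound lands exactly on the optimizer $\psi^*_i$ of \req{recopt} (which is the reason \req{recopt} and \req{cdef1}--\req{cdef2} were set up the way they are), and in particular reconciling the ``bridge'' step and the empty-tail case $m_k=H_k$ with the two separate definitions \req{cdef1} and \req{cdef2}. A secondary point requiring care is the continuity of the GP mean predictor $\mu_f$ with respect to the metrics $d_i$, which must be invoked for every tail step.
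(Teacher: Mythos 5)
Your proposal is correct and follows essentially the same route as the paper's proof: the shifted tail of $\bfmath{u}^*(t_k)$ plus one step of the controller $C$ as the feasible candidate, the bridge step at $j=m_k-1$ using the non-violated trigger threshold, the recursive kernel-metric propagation landing on $\psi_i^*(\cdot|t_k)$ at each stage and on $\gamma_i$ at the terminal stage, and the conclusion via \rlem{inoutlem} and \ras{terminalsetas}. The only cosmetic difference is that the paper packages the one-step kernel-metric estimates as a separate lemma (\rlem{metricbound}, with the bounds $\zeta_{N,i}$ and $\hat{\zeta}_{N,i}$) rather than deriving them inline from the componentwise bounds and the logarithmic identity as you do.
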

For the proof of \rthm{feasiblethm}, see \rapp{feasiblethmproof}. %\rthm{feasiblethm} states that the OCP \req{ocp} fulfills recursive feasibility if the next triggering time is determined according to \req{trigger}.

\subsection{Convergence analysis}\label{stabilitysec}
Let us now show the result of the convergence analysis of the proposed approach in \ralg{itertask}. 
As a preliminary to provide the main result, we first give the following lemma regarding the length of the prediction horizon and the state of the system \req{realdynamics}.
\begin{lem}\label{horizonlem}
\normalfont
Given the training dataset $\mathcal{D}_N = \{\mathcal{D}_{N, i}\}_{i = 1}^{n_x}$, and let \ras{rkhsas}, 2 and 3 hold.
Suppose that the OCP at time $t_k$ ($k \in \mathbb{N}_{\ge 0}$) are feasible.
Moreover, suppose that the next OCP time $t_{k + 1}$ is determined according to the event-triggered condition \req{trigger} and the prediction horizon is computed as $H_{k + 1} = H_k - m_k + 1 = 1$. 
Then, $x(t_{k + 1}) \in \mathcal{X}_S$. 
\qedwhite
\end{lem}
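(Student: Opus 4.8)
The plan is to exploit the fact that the hypothesis $H_{k+1} = H_k - m_k + 1 = 1$ forces $m_k = H_k$, i.e. the event is triggered exactly at the end of the prediction horizon. Thus $x(t_{k+1}) = x(t_k + H_k)$, while the predicted terminal state $\hat{x}^*(H_k\mid t_k)$ belongs to $\mathcal{X}_f$ by the terminal constraint of the OCP \req{ocp}, and $\hat{x}^*(H_k\mid t_k) = \mu_f(\hat{x}^*(H_k-1\mid t_k), u^*(H_k-1\mid t_k))$ is obtained by propagating from $\hat{x}^*(H_k-1\mid t_k)$ — a state that, since the event was \emph{not} triggered at $j = H_k-1$, is within the tolerance $\xi_i^*(H_k-1;x(t_k))$ of the actual state $x(t_k+H_k-1)$ in every kernel metric $d_i$. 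The strategy is therefore: first show $d_i(\hat{x}^*(H_k\mid t_k), x(t_{k+1})) \le \gamma_i$ for every $i\in\mathbb{N}_{1:n_x}$, and then invoke \rlem{inoutlem} with $\mathcal{X}_1 = \mathcal{X}_f$, $\mathcal{X}_2 = \mathcal{X}_S$, $r_i = \gamma_i$ (admissible since $\gamma_i < \sqrt{2}\alpha_i$ by \ras{terminalsetas}), using $\mathcal{X}_f \subseteq \bigcap_i \mathsf{Int}_{d_i}(\mathcal{X}_S;\gamma_i)$, to conclude $x(t_{k+1})\in\mathcal{X}_S$.

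For the distance bound I would estimate the componentwise error $e_\ell := \hat{x}^*_\ell(H_k\mid t_k) - x_\ell(t_{k+1})$ by splitting it into three parts: (i) the GP prediction error $\mu_{f_\ell}-f_\ell$ evaluated at $(\hat{x}^*(H_k-1\mid t_k), u^*(H_k-1\mid t_k))$, bounded by $\hat{\delta}_{N,\ell}(H_k-1\mid t_k) - \sigma_{w_\ell}$ via \rlem{errorbound} and \req{ndeltadef}; (ii) the continuity term $f_\ell(\hat{x}^*(H_k-1\mid t_k),\cdot) - f_\ell(x(t_k+H_k-1),\cdot)$, bounded by $b_\ell\, d_\ell(x(t_k+H_k-1), \hat{x}^*(H_k-1\mid t_k))$ via \rlem{continuity}; and (iii) the noise term $-w_\ell(t_k+H_k-1)$, bounded by $\sigma_{w_\ell}$. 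Since $m_k = H_k$, the event was not triggered at $j = H_k-1$, so the metric in (ii) is at most $\xi_\ell^*(H_k-1;x(t_k)) = \psi_\ell^*(H_k-1\mid t_k) - \hat{\delta}_{N,\ell}(H_k-1\mid t_k)/b_\ell$ by \req{xidef}. Adding the three bounds, the $\hat{\delta}_{N,\ell}$ and $\sigma_{w_\ell}$ contributions cancel and one is left with $|e_\ell| \le b_\ell\,\psi_\ell^*(H_k-1\mid t_k)$ for all $\ell$ (the degenerate case $H_k = 1$, where $x(t_k+H_k-1)=x(t_k)$ and the continuity term vanishes, is subsumed since $\xi_\ell^*(0;x(t_k))\ge 0$).

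To pass to the kernel metric, note that $d_i(x_1,x_2)$ is a strictly increasing function of $\|x_1-x_2\|_{\Lambda_{x,i}^{-1}}$, and by the definition \req{cdef1} of $c_{N,i}(H_k\mid t_k)$ one has $d_i(x_1,x_2)\le\gamma_i \iff \|x_1-x_2\|_{\Lambda_{x,i}^{-1}} \le c_{N,i}(H_k\mid t_k)$ (valid because $\gamma_i<\sqrt2\alpha_i$). The bound $|e_\ell|\le b_\ell\psi_\ell^*(H_k-1\mid t_k)$ gives $\|e\|^2_{\Lambda_{x,i}^{-1}} = \sum_\ell e_\ell^2/\lambda_{i,\ell} \le \|b\odot\psi^*(H_k-1\mid t_k)\|^2_{\Lambda_{x,i}^{-1}}$ for each $i$, and the first constraint of the geometric program \req{recopt} at $j = H_k-1$ bounds the right-hand side by $c_{N,i}^2(H_k\mid t_k)$; here I would cite \rlem{recoptfeasiblelem} to be sure that $\psi^*(\cdot\mid t_k)$ is actually well-defined, since the OCP at $t_k$ is feasible. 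Hence $d_i(\hat{x}^*(H_k\mid t_k), x(t_{k+1}))\le\gamma_i$ for all $i$, and \rlem{inoutlem} together with $\hat{x}^*(H_k\mid t_k)\in\mathcal{X}_f$ yields $x(t_{k+1})\in\mathcal{X}_S$.

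The main obstacle — really the only non-routine point — is the bookkeeping that makes the uncertainty terms cancel: recognizing that the threshold $\xi_i^*$, the uncertainty $\hat{\delta}_{N,i}$, the auxiliary variable $\psi_i^*$, and the constants $c_{N,i}$ were co-designed precisely so that (a) the one-step propagated-state error is governed by $b_\ell\psi_\ell^*$, and (b) $\|b\odot\psi^*\|_{\Lambda_{x,i}^{-1}}\le c_{N,i}(H_k\mid t_k)$ translates exactly into "$d_i \le \gamma_i$", which is the ingredient \rlem{inoutlem} needs to push the predicted terminal state's membership in $\mathcal{X}_f$ down to the actual state's membership in $\mathcal{X}_S$. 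Everything else is a direct application of Lemmas~\ref{errorbound}, \ref{continuity}, \ref{inoutlem} and \ref{recoptfeasiblelem}.
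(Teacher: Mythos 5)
Your proposal is correct and follows essentially the same route as the paper: bound the kernel metric between $x(t_{k+1})=x(t_k+H_k)$ and the predicted terminal state $\hat{x}^*(H_k\,|\,t_k)\in\mathcal{X}_f$ by $\gamma_i$ using the non-triggering at $j=H_k-1$, the definition \req{xidef}, the first constraint of \req{recopt}, and \req{cdef1}, then invoke \rlem{inoutlem} with \req{terminalsetdef} to land in $\mathcal{X}_S$. The only cosmetic difference is that you inline the one-step error decomposition (continuity term plus GP error plus noise) that the paper packages as \rlem{metricbound} in \rapp{feasiblethmproof}, and the cancellation of the $\hat{\delta}_{N,i}$ and $\sigma_{w_i}$ terms you carry out explicitly is exactly what the paper's substitution of $\xi^*+\hat{\delta}_N/b=\psi^*$ accomplishes.
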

For the proof of \rlem{horizonlem}, see \rapp{horizonlemproof}. \rlem{horizonlem} means that if the prediction horizon is set to $1$ in \ralg{itertask}, it follows that the state is in $\mathcal{X}_S$. 

Given a set of training dataset $\mathcal{D}_N = \{\mathcal{D}_{N, i}\}_{i = 1}^{n_x}$, let $\bar{\mathcal{X}} \subseteq \mathcal{X}_{\rm init}$ denote the set of all the states in $\mathcal{X}_{\rm init}$ which can be controlled towards the terminal set $\mathcal{X}_f$ under the prediction model \req{gpmodel} and the initial prediction horizon $\bar{H}$, i.e., $\bar{\mathcal{X}} = \{x(t_0) \in \mathcal{X}_{\rm init}: \exists \{u(j | t_0) \in \mathcal{U}\}_{j = 0}^{\bar{H} - 1}, \hat{x}({\bar{H}}| t_0) \in \mathcal{X}_f\}$, where $\hat{x}(j + 1 | t_0) = \hat{{f}}(\hat{x}(j | t_0), u(j | t_0); \mathcal{D}_{N})$ for all $j \in \mathbb{N}_{0 : \bar{H} - 1}$ with $\hat{x}(0 | t_0) = x(t_0)$. That is, $\bar{\mathcal{X}}$ is the set of all the initial states in $\mathcal{X}_{\rm init}$, for which the OCP is feasible with the initial prediction horizon $\bar{H}$. 

Using \rlem{horizonlem}, we now show that any state starting from $\bar{\mathcal{X}}$ converges to $\mathcal{X}_f$ in finite time, if the iteration progresses and the uncertainties on the optimal predictive states are small enough. 

\begin{thm}\label{stabilitythm}
\normalfont
Given the training data set $\mathcal{D}_N = \{\mathcal{D}_{N, i}\}_{i = 1}^{n_x}$, and let Assumptions~1, 2, and 3 hold. Let the Execution phase (line~3--line~23) in \ralg{itertask} be implemented.
Moreover, suppose that the uncertainty $\delta_{N}(x, u)$ fulfills the following condition:
\begin{align}\label{stabilityeq}
    d_i(0_{n_x}, \delta_{N}(x, u)) \le \xi_i^*(1 ; x),
\end{align}
for all $x \in \mathbb{R}^{n_x}$, $u \in \mathcal{U}$, and $i \in \mathbb{N}_{1: n_x}$.
Then, the state of the system \req{realdynamics} starting from any $x(t_0) \in \bar{\mathcal{X}}$ enters the terminal set $\mathcal{X}_f$ within the time steps $2\bar{H} - 1$. 
\qedwhite
\end{thm}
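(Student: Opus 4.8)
The plan is to exploit the decreasing-horizon mechanism of \ralg{itertask}: show that the Execution phase runs through a sequence of feasible OCPs, that condition \req{stabilityeq} forces every inter-event interval to be at least two steps, so that the prediction horizon drops by at least one at each OCP solve and hence reaches $1$ after at most $\bar H-1$ solves, at which moment the state already lies in $\mathcal X_S$ by \rlem{horizonlem}; the offline controller $C$ of \ras{terminalsetas} then steers it into $\mathcal X_f$ in one further step. First I would establish feasibility along the trajectory. Since $x(t_0)\in\bar{\mathcal X}$, the OCP \req{ocp} at $t_0$ is feasible with $H_0=\bar H$, and by \rlem{recoptfeasiblelem} the auxiliary problem \req{recopt} at $t_0$ is feasible too, so \ralg{itertask} does not break out and the event time $t_1$ is well defined. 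Combining \rthm{feasiblethm} and \rlem{recoptfeasiblelem} inductively, for every index $k$ reached before the Execution phase exits the OCP at $t_k$ is feasible, $H_{k+1}=H_k-m_k+1$ with $m_k:=t_{k+1}-t_k$, and $1\le m_k\le H_k$ (the upper bound because the terminal threshold $\xi_i^*(H_k;x(t_k))$ in \req{trigger} is negative), hence $H_{k+1}\ge1$.

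The decisive step is to show $m_k\ge 2$ whenever $H_k\ge2$. Since $\hat x^*(0|t_k)=x(t_k)$, \rlem{errorbound} together with $|w_i(t_k)|\le\sigma_{w_i}$ gives, componentwise, $|x_i(t_k+1)-\hat x_i^*(1|t_k)|\le\beta_{N,i}\,\sigma_{f_i}(x(t_k),u^*(0|t_k);\mathcal D_{N,i})+\sigma_{w_i}=\delta_{N,i}(x(t_k),u^*(0|t_k))$. Because the kernel metric $d_i$ in \req{kernelmetric}--\req{kx} is a monotonically increasing function of $\|\cdot\|_{\Lambda_{x,i}^{-1}}$ evaluated at the difference of its arguments, this componentwise bound propagates to $d_i(x(t_k+1),\hat x^*(1|t_k))\le d_i(0_{n_x},\delta_N(x(t_k),u^*(0|t_k)))$, which by hypothesis \req{stabilityeq} is at most $\xi_i^*(1;x(t_k))$ for every $i$. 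Together with $d_i(x(t_k),\hat x^*(0|t_k))=0\le\xi_i^*(0;x(t_k))$ (the latter is nonnegative by the lower constraint of \req{recopt}), the event-triggered condition \req{trigger} is not violated at $j\in\{0,1\}$, so $m_k\ge2$; note $\xi_i^*(1;x(t_k))$ is available exactly because $H_k\ge2$, so this argument applies at every step before the horizon hits $1$.

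Finally I would carry out the counting. Let $K$ be the first index with $x(t_K)\in\mathcal X_S$; by minimality of $K$ and \rlem{horizonlem}, $H_k\ge2$ for all $k<K$, hence $m_k\ge2$, so the horizon strictly decreases and must reach $H_K=1$ (at which point $x(t_K)\in\mathcal X_S$ by \rlem{horizonlem}; an earlier exit through line~22 only makes $t_K$ smaller). Telescoping $H_{k+1}=H_k-(m_k-1)$ yields $\sum_{k=0}^{K-1}(m_k-1)=\bar H-H_K\le\bar H-1$, and since each summand is $\ge1$, $K\le\bar H-1$; therefore $t_K=\sum_{k=0}^{K-1}m_k=(\bar H-H_K)+K\le2(\bar H-1)=2\bar H-2$. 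At $t_K$ the algorithm switches to $C$: by \ras{terminalsetas} there is $u\in C(x(t_K))$ with $\mathcal F(x(t_K),u;\mathcal D_N)\oplus\mathcal W\subseteq\mathcal X_f$, and since $f(x(t_K),u)\in\mathcal F(x(t_K),u;\mathcal D_N)$ by \rlem{errorbound} and $w(t_K)\in\mathcal W$, we get $x(t_K+1)=f(x(t_K),u)+w(t_K)\in\mathcal X_f$ with $t_K+1\le2\bar H-1$, as claimed.

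I expect the decisive step of the second paragraph to be the main obstacle: carefully relating the one-step prediction error to the kernel-metric quantity $d_i(0_{n_x},\delta_N(\cdot,\cdot))$ appearing in \req{stabilityeq} (via monotonicity of the SE kernel metric), and checking that the thresholds $\xi_i^*(1;\cdot)$ there are precisely those used in \req{trigger} and are well defined at every step with $H_k\ge2$; the recursive-feasibility bookkeeping and the horizon count are then routine.
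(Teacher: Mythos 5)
Your proposal is correct and follows essentially the same route as the paper's proof: recursive feasibility via Lemma~4 and Theorem~1, the key observation that hypothesis \req{stabilityeq} together with $d_i(x(t_k),\hat{x}^*(0|t_k))=0$ forces every inter-event time to satisfy $m_k\ge 2$, the decreasing-horizon count giving arrival in $\mathcal{X}_S$ within $2\bar{H}-2$ steps, and one final step under the controller $C$ of Assumption~2 to reach $\mathcal{X}_f$. The only cosmetic differences are that you telescope the horizon recursion directly where the paper argues by contradiction, and that you re-derive the one-step kernel-metric bound from Lemma~1 and monotonicity of the SE kernel metric instead of invoking the appendix lemma (Lemma~9) that packages the same computation.
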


\begin{proof}
Suppose that we have $x(t_0) \in \bar{\mathcal{X}}$. 
Then, from \rlem{recoptfeasiblelem} and \rthm{feasiblethm}, the optimal control sequence $\bfmath{u}^*(t_k)$, the corresponding (predictive) states $\bfmath{\hat{x}}^*(t_k)$, and the set of parameters $\{\xi_i^*(j ; x(t_k))\}_{i = 1}^{n_x}$ are obtained for all OCP time $t_k \in \mathbb{N}_{\ge 0}$.
Moreover, for some given $T \in \mathbb{N}_{> 0}$, the prediction horizon of the OCP at $t_T$ can be recursively computed by 
\begin{align}\label{horizondevelop}
H_T &= H_{T - 1} - (m_{T - 1} - 1) \notag \\
&= H_{T - 2} - (m_{T - 2} - 1) - (m_{T - 1} - 1)\notag \\
&= H_{0} - \sum_{k = 0}^{T - 1} (m_k - 1).
\end{align}
Moreover, suppose that the uncertainty $\delta_{N, i}(x, u)$ fulfills the condition \req{stabilityeq} for all $x \in \mathbb{R}^{n_x}$, $u \in \mathcal{U}$, and $i \in \mathbb{N}_{1: n_x}$. Then, from \rlem{metricbound}, \req{stabilityeq}, and $d_i(x(t_k), \hat{x}^*(0 | t_k)) = 0$, the kernel metric \req{kernelmetric} between the actual state ${x}(t_k + 1) = {f}({x}(t_k), {{u}}^*(0|t_k)) + {w}(t_k)$ and the nominal one $\hat{{x}}^*(1 | t_k)$ can be computed by
\begin{align}\label{kernelmetricboundcomputezero}
&d_i({x}(t_k + 1), \hat{{x}}^*(1 | t_k)) \notag \\
&\le \zeta_{N, i}(x(t_k), \hat{x}^*(0 | t_k), {u}^*(0 | t_k)) \notag \\
&= \sqrt{2\alpha_i^2\left\{1 - \exp \left(- \frac{1}{2} \|\hat{\delta}_N(0 | t_k) \|^2_{\Lambda_{x, i}^{-1}}\right) \right\}} \notag \\
&= d_i(0_{n_x}, \hat{\delta}_{N}(0 | t_k)) \le \xi_i^*(1 ; x(t_k)),
\end{align}
for all $i \in \mathbb{N}_{1: n_x}$. It follows that the time instant $t_k + 1$ is not triggered in \req{trigger} and the inter-event time becomes $m_{k} \in \mathbb{N}_{2: H_k}$ for all $k \in \mathbb{N}_{0 : T - 1}$. Therefore, from \req{horizondevelop} and $m_{k} \in \mathbb{N}_{2: H_k}$ for all $k \in \mathbb{N}_{0: T - 1}$, the prediction horizon $H_T$ is obtained as
\begin{align}\label{horizonmax}
    H_T \le H_0 - \sum_{k = 0}^{T - 1}(2 - 1) = H_0 - T.
\end{align}
Using \req{horizonmax}, let us show that  the state of the system \req{realdynamics} starting from any $\bar{\mathcal{X}}$ is guaranteed to enter the terminal set $\mathcal{X}_f$ within finite time by contradiction as follows.

Assume that there exists time $t_T \ge t_0 + 2H_0 - 2$ such that we have $x(t_T) \notin \mathcal{X}_S$. From the contrapositive of \rlem{horizonlem}, since $x(t_T) \notin \mathcal{X}_S$, the prediction horizon at $t_T$ is obtained as $H_T \ge 2$. Moreover, \req{horizondevelop} can be further computed by
\begin{align}
H_T &= H_{0} + T - \sum_{k = 0}^{T - 1} m_k \notag \\
&= H_0 + T - (t_{T} - t_{T - 1} + t_{T - 1} - t_{T - 2} + \ldots + t_1 - t_0) \notag \\
&= H_0 + T - (t_T - t_0).
\end{align}
Hence, by the assumption $t_T \ge t_0 + 2H_0 - 2$, we have $H_0 \le T$.
However, from \req{horizonmax}, if $H_0 \le T$ holds, the prediction horizon becomes $H_T \le 0$. This clearly contradicts that the fact that we have $H_T > 0$. Thus, it is proved that the state of the system enters the set $\mathcal{X}_S$ within the time interval $2\bar{H} - 2$ by contradiction. 
Furthermore, from \ras{terminalsetas}, there exists $u = C(x(t_{T}))$, such that $\hat{x}(t_{T} + 1) = f(x(t_{T}), u) + w \in \mathcal{F}(x(t_{T}), u; \mathcal{D}_N) \oplus \mathcal{W} \subseteq \mathcal{X}_f$, satisfying the terminal constraint. 
Therefore, the state of the system starting from the initial state set $\bar{\mathcal{X}}$ is guaranteed to enter the terminal set $\mathcal{X}_f$ within the time interval $2\bar{H} - 1$.

\end{proof}

\section{Terminal set construction via symbolic models}\label{terminalsetsec}
In this section we formulate a terminal set construction via a symbolic model. First, in \rsec{simrelsetinvsec}, we review some concepts to define the symbolic model. Then, in \rsec{symbolicmodelsec}, several definitions of the symbolic model are formulated based on \rsec{simrelsetinvsec}. Finally, we provide how to construct the terminal set via the symbolic model in \rsec{constterminalsetsec}.

\subsection{Simulation relation and set-invariant theory}\label{simrelsetinvsec}
The construction of the terminal set $\mathcal{X}_f$ satisfying \ras{terminalsetas} relies on the notion of a symbolic model, which, as detailed below, abstracts the behaviors of the control system \req{realdynamics}. 
First, we define the notation of an approximate alternating simulation relation, which captures behavioral relationships between the two systems.
\begin{defn}\label{asrdef}
\normalfont
Let $\Sigma_a = (\mathcal{X}_a, \mathcal{U}_a, G_a)$ and $\Sigma_b = (\mathcal{X}_b, \mathcal{U}_b, G_b)$ be two transition systems. Given %$\varepsilon = \{\epsilon_i\}_{i = 1}^{n_x}$ with 
$\epsilon_i \in \mathbb{R}_{> 0}$ and metrics $d_i  : \mathbb{R}^{n_x} \times \mathbb{R}^{n_x} \rightarrow \mathbb{R}_{\geq 0}$ for all $i \in \mathbb{N}_{1: n_x}$, 
a relation $R(\varepsilon) \subseteq \mathcal{X}_a \times \mathcal{X}_b$ with $\varepsilon = \{\epsilon_i\}_{i = 1}^{n_x}$ is called an $\varepsilon$-approximate alternating simulation relation (or $\varepsilon$-ASR for short) from $\Sigma_a$ to $\Sigma_b$ if the following conditions are satisfied: 
\begin{itemize}
\setlength{\leftskip}{0.5cm}
    \item[(C.1)] For every ${x}_{a} \in \mathcal{X}_a$, there exists ${x}_{b} \in \mathcal{X}_b$ with $({x}_{a}, {x}_{b}) \in R(\varepsilon)$, 
    \item[(C.2)] For every $({x}_{a}, {x}_{b}) \in R(\varepsilon)$, we have $d_i ({x}_a, {x}_b) \le \epsilon_i$ for all $i \in \mathbb{N}_{1: n_x}$, 
    \item[(C.3)] For every $({x}_{a}, {x}_{b}) \in R(\varepsilon)$ and for every ${u}_a \in \mathcal{U}_a({x}_a)$, there exist ${u}_b \in \mathcal{U}_b({x}_b)$ such that for every ${x}_b' \in G_b({x}_b, {u}_b)$ there exists ${x}_a' \in G_a({x}_a, {u}_a)$ satisfying $({x}_a', {x}_b') \in R(\varepsilon)$. \qedwhite 
\end{itemize}
\end{defn}
Roughly speaking, the existence of an $\varepsilon$-ASR from $\Sigma_a$ to $\Sigma_b$, i.e., $R(\varepsilon) \in \mathcal{X}_a \times \mathcal{X}_b$, means that the state evolution of a concrete system $\Sigma_b$ is simulated by the state evolution of the abstract system $\Sigma_a$. 
More specifically, if $({x}_a, {x}_b) \in R(\varepsilon)$, every transition of $\Sigma_a$ is approximately simulated by those of $\Sigma_b$ according to (C.1) - (C.3) in \rdef{asrdef}. 
Note that $\varepsilon$ as for ($\varepsilon$-)ASR is composed of the set of $\epsilon_i$, i.e., $\varepsilon = \{\epsilon_i\}_{i = 1}^{n_x}$, corresponding to a metric $d_i (\cdot, \cdot)$ in order to allocate the metric $d_i (\cdot, \cdot)$ to the kernel metric $d_i(\cdot, \cdot)$ for all $i \in \mathbb{N}_{1: n_x}$ for constructing an abstract system of the original system \req{realdynamics} in \rsec{symbolicmodelsec}.
The concept of an $\varepsilon$-ASR is useful to synthesize a controller for a concrete system by refining a controller synthesized for the abstract system with the alternating simulation relation by algorithmic techniques from discrete event systems, e.g., \cite{tabuada2009verification}.

Next, we define the concepts of a controlled invariant set and a {contractive} set \cite{blanchini1999set}, which are characterized here by the interval set \req{intervalset} and the kernel metrics \req{kernelmetric}, respectively. 
\begin{defn}\label{invariantdef}
\normalfont
Let $\mathcal{D}_N = \{\mathcal{D}_{N, i}\}_{i = 1}^{n_x}$ be the training dataset. A set $\mathcal{P} \subset \mathbb{R}^{n_x}$ is called a controlled invariant set if there exists a controller $C: \mathbb{R}^{n_x} \to 2^\mathcal{U}$ such that the following condition holds: for every ${x} \in \mathcal{P}$, there exists ${u} \in C({x})$ such that $\mathcal{F}(x, u; \mathcal{D}_N) \oplus \mathcal{W} \subseteq \mathcal{P}$.
\qedwhite
\end{defn}
\begin{defn}\label{contractivedef}
\normalfont
Let $\mathcal{D}_N = \{\mathcal{D}_{N, i}\}_{i = 1}^{n_x}$ be the training dataset, and let $d_i : \mathbb{R}^{n_x} \times \mathbb{R}^{n_x} \rightarrow \mathbb{R}_{\geq 0}$ be the kernel metric in \req{kernelmetric}. 
Given $\widetilde{\gamma} = \{\widetilde{\gamma}_i\}_{i = 1}^{n_x}$ with $\widetilde{\gamma}_i \in (0, \sqrt{2}\alpha_i)$ for $i \in \mathbb{N}_{1: n_x}$, a set $\mathcal{P} \subset \mathbb{R}^{n_x}$ is called a $\widetilde{\gamma}$-contractive set if there exists a controller $C: \mathbb{R}^{n_x} \to 2^\mathcal{U}$ such that the following condition holds: for every ${x} \in \mathcal{P}$, there exists ${u} \in C({x})$ such that $\mathcal{F}(x, u; \mathcal{D}_N) \oplus \mathcal{W} \subseteq \bigcap_{i=1}^{n_x}\mathsf{Int}_{d_i}(\mathcal{P}; \widetilde{\gamma}_i)$.
\qedwhite
\end{defn}
\rdef{contractivedef} means that a set $\mathcal{P} \subset \mathbb{R}^{n_x}$ is called $\widetilde{\gamma}$-contractive if all the states in $\mathcal{P}$ can be driven into a smaller (or, contractive) region $\bigcap_{i=1}^{n_x}\mathsf{Int}_{d_i}(\mathcal{P}; \widetilde{\gamma}_i)$ by applying the controller $C$. 
Note that setting $\widetilde{\gamma}_i \rightarrow 0$ for all $i \in \mathbb{N}_{1: n_x}$ corresponds to a controlled invariant set. Moreover, if $\widetilde{\gamma}_i \rightarrow \sqrt{2}\alpha_i$ for some $i \in \mathbb{N}_{1: n_x}$, then  $\bigcap_{i=1}^{n_x}\mathsf{Int}_{d_i}(\mathcal{P}; \widetilde{\gamma}_i) \rightarrow \varnothing$, since
\begin{align}
\mathsf{Int}_{d_i}(\mathcal{P}; \sqrt{2}\alpha_i) &= \{{x} \in \mathcal{P}\ |\ \mathcal{B}_{d_i}({x}; \sqrt{2}\alpha_i) \subseteq \mathcal{P}\} \notag \\
&= \{{x}_1 \in \mathcal{P} \ |\ \forall x_2 \in \mathbb{R}^{n_x}, \ d_i({x}_1, {x}_2) \le \sqrt{2}\alpha_i \notag \\ 
& \ \ \ \ \ \ \implies {x}_2 \in \mathcal{P} \} \notag \\
&= \{{x}_1 \in \mathcal{P} \ | \ \forall x_2 \in \mathbb{R}^{n_x}, \|{x}_1 - {x}_2\|_{\Lambda_{x, i}} \le \infty \notag  \\ 
& \ \ \ \ \ \ \implies {x}_2 \in \mathcal{P} \} \notag \\
&= \varnothing, 
\end{align}
where we used $d_i({x}_1, {x}_2) \le \sqrt{2}\alpha_i \Longleftrightarrow \|{x}_1 - {x}_2\|_{\Lambda_{x, i}} \le \infty$ from the definition of the kernel metric \req{kernelmetric}. Moreover, the last equality holds since for every $x_1 \in \mathcal{P} \subset \mathbb{R}^{n_x}$ there exists $x_2 \in \mathbb{R}^{n_x}$ such that $\|{x}_1 - {x}_2\|_{\Lambda_{x, i}} \leq \infty$ and $x_2 \notin \mathcal{P}$. 

Based on the contractive set in Definition~4, we can then construct the terminal set $\mathcal{X}_f$ satisfying \ras{terminalsetas} as follows. 
Suppose that we can construct $\widetilde{\gamma}$-contractive set $\mathcal{X}_S \subset \mathbb{R}^{n_x}$ for given $\widetilde{\gamma} = \{\widetilde{\gamma}_i\}_{i = 1}^{n_x}$, $\widetilde{\gamma}_i \in (0, \sqrt{2}\alpha_i)$, $i \in \mathbb{N}_{1: n_x}$. Then, let ${\gamma} = \{{\gamma}_i\}_{i = 1}^{n_x}$ with $0 < \gamma_i \le \widetilde{\gamma}_i$ for all $i \in \mathbb{N}_{1: n_x}$, and let $\mathcal{X}_f$ be the terminal set such that 
\begin{align}\label{ininterminal}
    \mathcal{X}_f \subseteq \left[
    \bigcap_{i=1}^{n_x}\mathsf{Int}_{d_i}(\mathcal{X}_S; \widetilde{\gamma}_i), \bigcap_{i=1}^{n_x}\mathsf{Int}_{d_i}(\mathcal{X}_S; {\gamma}_i)\right]. 
\end{align}
Then, it follows that $\mathcal{X}_f$ satisfies \ras{terminalsetas} with $\gamma$, since $\mathcal{X}_f \subseteq  \bigcap_{i=1}^{n_x}\mathsf{Int}_{d_i}(\mathcal{X}_S; {\gamma}_i)$, and for every $x \in \mathcal{X}_S$ there exists $u \in C(x)$ such that $\mathcal{F}(x, u; \mathcal{D}_N) \oplus \mathcal{W} \subseteq \bigcap_{i=1}^{n_x}\mathsf{Int}_{d_i}(\mathcal{X}_S; \widetilde{\gamma}_i) \subseteq \mathcal{X}_f$. 

\subsection{Symbolic models}\label{symbolicmodelsec}
% Let us now construct a symbolic model for the over-approximation of the system \req{realdynamics}. Before constructing the symbolic model, from \rlem{errorbound}, recall that the interval set $\mathcal{F}$ over-approximates the unknown transition function $f$ as $f(x, u) \in \mathcal{F}(x, u; \mathcal{D}_N)$ for all $x \in \mathbb{R}^{n_x}$ and $u \in \mathbb{R}^{n_u}$. Moreover, note that the definition of the terminal set $\mathcal{X}_f$ satisfying \ras{terminalsetas} is attributed to the interval set $\mathcal{F}$, not the unknown transition function $f$. 

Let us now construct a symbolic model for deriving the terminal set $\mathcal{X}_f$ satisfying \ras{terminalsetas}. Before constructing the symbolic model, from  \ras{terminalsetas}, the terminal set $\mathcal{X}_f$ is defined based on the interval set $\mathcal{F}$ in \rlem{errorbound} that over-approximates the unknown transition function $f$ such that $f(x, u) \in \mathcal{F}(x, u; \mathcal{D}_N)$ for all $x \in \mathbb{R}^{n_x}$ and $u \in \mathbb{R}^{n_u}$.
Therefore, we start by showing that the over-approximation $\mathcal{F}$ of the unknown system \req{realdynamics} can be described by a notation of a transition system in \rdef{transdef}:
\begin{defn}\label{realtrans}
\normalfont
Let $\mathcal{D}_{N} = \{\mathcal{D}_{N, i}\}_{i = 1}^{n_x}$ be the training dataset. A transition system induced by the system \req{realdynamics} whose transitions are learned by the GPR with the training dataset $\mathcal{D}_{N}$ is a triple $\Sigma = (\mathbb{R}^{n_x}, \mathcal{U}, G)$ consisting of:
\begin{itemize}
    \item $\mathbb{R}^{n_x}$ is a set of states;
    \item $\mathcal{U}$ is a set of inputs; 
    \item $G: \mathbb{R}^{n_x} \times \mathcal{U} \to 2^{\mathbb{R}^{n_x}}$ is a transition map, where ${x}^+ \in G({x}, {u})$ iff $x^+ \in \mathcal{F}(x, u; \mathcal{D}_N) \oplus \mathcal{W}$.
    \qedwhite
\end{itemize}
\end{defn} 
Based on the transition system $\Sigma$ in \rdef{realtrans}, a symbolic model for $\Sigma$ is constructed by discretizing the state and the input spaces. More specifically, we consider constructing the symbolic model by a triple $\mathsf{q} = ({\eta}_x, {\eta}_u, \varepsilon)$, where ${\eta}_x = [\eta_{x, 1}, \ldots, \eta_{x, n_x}]^\top \in \mathbb{R}^{n_x}_{>0}$ is the discretization vector of the state space, ${\eta}_u = [\eta_{u, 1}, \ldots, \eta_{u, n_u}]^\top \in \mathbb{R}^{n_u}_{> 0}$ is the discretization vector of the input space, and $\varepsilon = \{\epsilon_i\}_{i = 1}^{n_x}$ with $\epsilon_i \in \mathbb{R}_{ > 0}$ for $i \in \mathbb{N}_{1: n_x}$ is the set of parameters for the precision.
Using the triple $\mathsf{q}$, the corresponding symbolic model for the transition system $\Sigma$ denoted by $\Sigma_\mathsf{q}$ is formally defined as follows:
\begin{defn}\label{symbolictrans}
\normalfont
Let $\mathcal{D}_{N} = \{\mathcal{D}_{N, i}\}_{i = 1}^{n_x}$ be the training dataset and $\Sigma$ be the transition system in \rdef{realtrans}. Given $\mathsf{q} =  ({\eta}_x, { \eta}_u, \varepsilon)$, a symbolic model for $\Sigma$ is defined as a triple $\Sigma_\mathsf{q} = (\mathcal{X}_\mathsf{q}, \mathcal{U}_\mathsf{q}, G_\mathsf{q})$ consisting of:
\begin{itemize}
    \item $\mathcal{X}_\mathsf{q} = [\mathbb{R}^{n_x}]_{{ \eta}_x}$ is a set of states;
    \item $\mathcal{U}_\mathsf{q} = [\mathcal{U}]_{{ \eta}_u}$ is a set of inputs;
    \item $G_\mathsf{q}: \mathcal{X}_\mathsf{q} \times \mathcal{U}_\mathsf{q} \to 2^{\mathcal{X}_\mathsf{q}}$ is a transition map, where ${x}_{\mathsf{q}}^+ \in G_\mathsf{q}({x}_\mathsf{q}, {u}_\mathsf{q})$ iff $x_{\mathsf{q}, i}^+ \in \mathcal{F}_i({x}_\mathsf{q}, {u}_\mathsf{q}; \mathcal{D}_{N, i}) \oplus \mathcal{W}_i \oplus \mathcal{E}_i$, where $x_{\mathsf{q}, i}^+$ is the $i$-th element of ${x}_{\mathsf{q}}$ and $\mathcal{E}_i = \left[\pm (b_i \cdot \epsilon_i + \eta_{x, i})\right]$ for all $i \in \mathbb{N}_{1: n_x}$.
    \qedwhite
\end{itemize}
\end{defn}
The symbolic model $\Sigma_\mathsf{q}$ in \rdef{symbolictrans} is well defined in the sense that there exists an $\varepsilon$-ASR from $\Sigma_\mathsf{q}$ to $\Sigma$ in \rdef{asrdef} as follows:
\begin{prop}\label{varepsilonasrprop}
\normalfont
Given the training dataset $\mathcal{D}_N = \{\mathcal{D}_{N, i}\}_{i = 1}^{n_x}$, and suppose that  \ras{rkhsas} holds.
Let $\Sigma$ be the transition system in \rdef{realtrans}.
Given $\mathsf{q} = ({\eta}_x, { \eta}_u, \varepsilon)$, where
\begin{align}\label{epsilonparam}
\epsilon_i \in \left. \left[\sqrt{2\alpha_i^2\left\{1 - \exp(-\frac{1}{2}\|{ \eta}_x\|^2_{\Lambda_{x, i}^{-1}})\right\}}, \sqrt{2} \alpha_i\right. \right),
\end{align}
for all $i \in \mathbb{N}_{1: n_x}$, let $\Sigma_\mathsf{q}$ be the symbolic model for $\Sigma$ in \rdef{symbolictrans}. Then, the relation
\begin{align}\label{varepsilonasr}
R(\varepsilon) = \{({x}_\mathsf{q}, {x}) \in \mathcal{X}_\mathsf{q} \times \mathbb{R}^{n_x}: d_i({x}_\mathsf{q}, {x}) \le \epsilon_i, \ \forall i \in \mathbb{N}_{1: n_x}\}
\end{align}
is an $\varepsilon$-ASR from $\Sigma_\mathsf{q}$ to $\Sigma$.
\qedwhite
\end{prop}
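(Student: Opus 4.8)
The plan is to verify, one at a time, the three defining conditions (C.1)--(C.3) of \rdef{asrdef} for the relation $R(\varepsilon)$ of \req{varepsilonasr}, with $\Sigma_a = \Sigma_\mathsf{q}$ and $\Sigma_b = \Sigma$. Conditions (C.1) and (C.2) are immediate: for (C.1), given any $x_\mathsf{q} \in \mathcal{X}_\mathsf{q} \subseteq \mathbb{R}^{n_x}$ one takes $x = x_\mathsf{q}$, so that $d_i(x_\mathsf{q}, x_\mathsf{q}) = 0 \le \epsilon_i$ for all $i$ and hence $(x_\mathsf{q}, x_\mathsf{q}) \in R(\varepsilon)$; and (C.2) holds by the very definition of $R(\varepsilon)$. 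The whole substance of the statement therefore lies in (C.3).

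For (C.3), I would fix $(x_\mathsf{q}, x) \in R(\varepsilon)$ and $u_\mathsf{q} \in \mathcal{U}_\mathsf{q}(x_\mathsf{q})$ and take the input of $\Sigma$ to be $u := u_\mathsf{q}$. This is admissible because $\mathcal{U}_\mathsf{q} = [\mathcal{U}]_{\eta_u} \subseteq \mathcal{U}$ and, the set $\mathcal{F}(x, u_\mathsf{q}; \mathcal{D}_N) \oplus \mathcal{W}$ being a non-empty hyperrectangle, $G(x, u_\mathsf{q}) \neq \varnothing$, i.e.\ $u_\mathsf{q} \in \mathcal{U}(x)$. Given an arbitrary $x' \in G(x, u_\mathsf{q}) = \mathcal{F}(x, u_\mathsf{q}; \mathcal{D}_N) \oplus \mathcal{W}$, let $x_\mathsf{q}' \in \mathcal{X}_\mathsf{q} = [\mathbb{R}^{n_x}]_{\eta_x}$ be a lattice point closest to $x'$ coordinate-wise, so that $|x_{\mathsf{q},i}' - x_i'| \le \eta_{x,i}/\sqrt{n_x}$ for all $i \in \mathbb{N}_{1:n_x}$. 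It then remains to prove (i) $(x_\mathsf{q}', x') \in R(\varepsilon)$, and (ii) $x_\mathsf{q}' \in G_\mathsf{q}(x_\mathsf{q}, u_\mathsf{q})$.

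Claim (i) follows from the lower bound on the precision in \req{epsilonparam}: the quantization estimate gives $\|x_\mathsf{q}' - x'\|_{\Lambda_{x,i}^{-1}}^2 \le \frac{1}{n_x}\|\eta_x\|_{\Lambda_{x,i}^{-1}}^2 \le \|\eta_x\|_{\Lambda_{x,i}^{-1}}^2$, so, using the explicit form $d_i(x_\mathsf{q}', x')^2 = 2\alpha_i^2\{1 - \exp(-\frac{1}{2}\|x_\mathsf{q}' - x'\|_{\Lambda_{x,i}^{-1}}^2)\}$ and the monotonicity of $t \mapsto 1 - \exp(-t/2)$, one obtains $d_i(x_\mathsf{q}', x') \le \sqrt{2\alpha_i^2\{1 - \exp(-\frac{1}{2}\|\eta_x\|_{\Lambda_{x,i}^{-1}}^2)\}} \le \epsilon_i$ for every $i$, i.e.\ $(x_\mathsf{q}', x') \in R(\varepsilon)$. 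For claim (ii), decompose $x_i' = a_i + w_i$ with $a_i \in \mathcal{F}_i(x, u_\mathsf{q}; \mathcal{D}_{N,i})$ and $w_i \in \mathcal{W}_i$, and write $x_{\mathsf{q},i}' = x_i' + q_i$ with $|q_i| \le \eta_{x,i}$. The crucial ingredient is a set-valued continuity property of the interval over-approximation $\mathcal{F}_i$ with respect to the kernel metric, $\mathcal{F}_i(x, u; \mathcal{D}_{N,i}) \subseteq \mathcal{F}_i(x_\mathsf{q}, u; \mathcal{D}_{N,i}) \oplus [\pm b_i d_i(x, x_\mathsf{q})]$, which is the set-valued analogue of \rlem{continuity} and is available from the RKHS characterization of $\mathcal{F}_i$ used in \cite{hashimoto2020learning}. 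Together with $d_i(x, x_\mathsf{q}) \le \epsilon_i$ this gives $a_i \in \mathcal{F}_i(x_\mathsf{q}, u_\mathsf{q}; \mathcal{D}_{N,i}) \oplus [\pm b_i\epsilon_i]$, whence $x_{\mathsf{q},i}' = a_i + w_i + q_i \in \mathcal{F}_i(x_\mathsf{q}, u_\mathsf{q}; \mathcal{D}_{N,i}) \oplus \mathcal{W}_i \oplus ([\pm b_i\epsilon_i] \oplus [\pm \eta_{x,i}]) = \mathcal{F}_i(x_\mathsf{q}, u_\mathsf{q}; \mathcal{D}_{N,i}) \oplus \mathcal{W}_i \oplus \mathcal{E}_i$ for all $i$, i.e.\ $x_\mathsf{q}' \in G_\mathsf{q}(x_\mathsf{q}, u_\mathsf{q})$. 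This completes the verification of (C.3) and hence of the proposition.

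I expect ingredient (ii) to be the main obstacle: one has to make precise the sense in which the interval-valued map $\mathcal{F}_i$ --- built from the GP posterior mean and variance --- varies continuously with its state argument, and to quantify that variation in terms of $b_i d_i(x, x_\mathsf{q})$. This is exactly what the correction term $b_i\epsilon_i$ inside $\mathcal{E}_i$ is meant to absorb, while the remaining term $\eta_{x,i}$ in $\mathcal{E}_i$ accounts for snapping $x'$ onto the state lattice. Everything else --- conditions (C.1)--(C.2), the choice $u = u_\mathsf{q}$, and the metric estimate (i) --- is routine given the definition of $R(\varepsilon)$ and the range \req{epsilonparam} imposed on the precision parameters.
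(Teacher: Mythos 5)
Your skeleton matches the paper's: (C.1) and (C.2) are immediate, for (C.3) you take $u = u_\mathsf{q}$, pick a nearest lattice point $x_\mathsf{q}'$ to $x'$, verify $(x_\mathsf{q}', x')\in R(\varepsilon)$ from the lower end of \req{epsilonparam}, and absorb a continuity correction $b_i\epsilon_i$ plus the quantization error $\eta_{x,i}$ into $\mathcal{E}_i$. The gap sits exactly where you flagged it, but the ingredient you propose to fill it is not the one that works. The inclusion $\mathcal{F}_i(x,u;\mathcal{D}_{N,i})\subseteq\mathcal{F}_i(x_\mathsf{q},u;\mathcal{D}_{N,i})\oplus[\pm b_i d_i(x,x_\mathsf{q})]$ is not the set-valued analogue of \rlem{continuity} and does not follow from \ras{rkhsas}: that lemma is a statement about the \emph{true} $f_i$, whose RKHS norm is bounded by $b_i$, whereas the interval $\mathcal{F}_i$ is built from the posterior mean $\mu_{f_i}$ and the half-width $\beta_{N,i}\sigma_{f_i}$, and neither is $b_i$-Lipschitz with respect to $d_i$. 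The posterior mean has RKHS norm only bounded by roughly $\sqrt{b_i^2+N}$, and the half-width can change by as much as $\beta_{N,i}\, d_i(x,x_\mathsf{q})$ between the two states (take $x_\mathsf{q}$ near a training input so that $\sigma_{f_i}(x_\mathsf{q},u)\approx 0$ while $\sigma_{f_i}(x,u)$ is of order $d_i(x,x_\mathsf{q})$); since $\beta_{N,i}$ is in general much larger than $b_i$, the interval $\mathcal{F}_i(x,u)$ need not fit inside $\mathcal{F}_i(x_\mathsf{q},u)\oplus[\pm b_i\epsilon_i]$, and your decomposition $x_i'=a_i+w_i$ with an \emph{arbitrary} $a_i\in\mathcal{F}_i(x,u_\mathsf{q};\mathcal{D}_{N,i})$ cannot be pushed into $\mathcal{F}_i(x_\mathsf{q},u_\mathsf{q};\mathcal{D}_{N,i})\oplus\mathcal{W}_i\oplus\mathcal{E}_i$.

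The paper avoids any set-valued continuity by routing the estimate through the single point $f_i(x,u)$: writing $x_i^+=f_i(x,u)+w_i$, it bounds $|x_{\mathsf{q},i}^+-\hat{f}_i(x_\mathsf{q},u_\mathsf{q};\mathcal{D}_{N,i})|$ by $|f_i(x,u)-f_i(x_\mathsf{q},u_\mathsf{q})|+|f_i(x_\mathsf{q},u_\mathsf{q})-\hat{f}_i(x_\mathsf{q},u_\mathsf{q};\mathcal{D}_{N,i})|+\sigma_{w_i}+\eta_{x,i}\le b_i\epsilon_i+\beta_{N,i}\sigma_{f_i}(x_\mathsf{q},u_\mathsf{q};\mathcal{D}_{N,i})+\sigma_{w_i}+\eta_{x,i}$, using \rlem{continuity} for the first term and \rlem{errorbound} evaluated at $(x_\mathsf{q},u_\mathsf{q})$ for the second; this lands directly in $\mathcal{F}_i(x_\mathsf{q},u_\mathsf{q};\mathcal{D}_{N,i})\oplus\mathcal{W}_i\oplus\mathcal{E}_i$. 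In other words, the correction $b_i\epsilon_i$ in $\mathcal{E}_i$ is sized for the true function, not for the interval map. Replacing your claimed inclusion by this pointwise chain repairs the argument; note that it then covers successors of the form $f(x,u)+w$ rather than arbitrary elements of $\mathcal{F}(x,u;\mathcal{D}_N)\oplus\mathcal{W}$, which is also the scope of the computation in the paper's own proof.
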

For the proof, see \rapp{varepsilonasrpropproof}. Now, we augment the symbolic model $\Sigma_\mathsf{q}$ to a model that can derive the $\widetilde{\gamma}$-contractive set $\mathcal{X}_S$ satisfying \rdef{contractivedef} and thus derive the terminal set $\mathcal{X}_f$ by \req{ininterminal}. Using the discretization vector of the state space $\eta_x$, we define the set of the contractive parameters $\widetilde{\gamma} = \{\widetilde{\gamma}_i\}_{i = 1}^{n_x}$ by
\begin{align}\label{contractiveparam}
\widetilde{\gamma}_i = \sqrt{2\alpha_i^2\left\{1 - \exp\left(- \frac{2\mathsf{z}^2_i}{n}\|{ \eta}_x\|^2_{\Lambda_{x, i}^{-1}}\right)\right\}} \ge \gamma_i
\end{align}
for all $i \in \mathbb{N}_{1: n_x}$ with some given positive integer $\mathsf{z}_i \in \mathbb{N}_{> 0}$. The set of the contractive parameters $\widetilde{\gamma}$ are designed to have the following property in the discrete state space:
\begin{lem}\label{intoutlemdis}
\normalfont
Given $\widetilde{\gamma}_i$ by \req{contractiveparam} for all $i \in \mathbb{N}_{1: n_x}$. Then, for any two sets $\mathcal{X}_{\mathsf{q}1}, \mathcal{X}_{\mathsf{q}2} \subseteq [\mathbb{R}^{n_x}]_{\eta_x}$, it follows that
\begin{align}\label{intoutdis}
\mathcal{X}_{\mathsf{q}1} \subseteq \bigcap_{i = 1}^{n_x} \widetilde{\mathsf{Int}}_{d_i}(\mathcal{X}_{\mathsf{q}2}; \widetilde{\gamma}_i), \ \Longleftrightarrow \ \bigcup_{i = 1}^{n_x}\widetilde{\mathsf{Out}}_{d_i}(\mathcal{X}_{\mathsf{q}1}; \widetilde{\gamma}_i) \subseteq \mathcal{X}_{\mathsf{q}2}.
\end{align}
\qedwhite
\end{lem}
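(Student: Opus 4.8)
The plan is to reduce \eqref{intoutdis} to a single-index set identity and then simply unfold the definitions of $\widetilde{\mathsf{Int}}_{d_i}$ and $\widetilde{\mathsf{Out}}_{d_i}$; no analytical estimate is needed. First I would use the elementary facts that an inclusion into a finite intersection, and an inclusion of a finite union, both decompose index-wise: $\mathcal{X}_{\mathsf{q}1}\subseteq\bigcap_{i=1}^{n_x}\widetilde{\mathsf{Int}}_{d_i}(\mathcal{X}_{\mathsf{q}2};\widetilde{\gamma}_i)$ holds iff $\mathcal{X}_{\mathsf{q}1}\subseteq\widetilde{\mathsf{Int}}_{d_i}(\mathcal{X}_{\mathsf{q}2};\widetilde{\gamma}_i)$ for every $i\in\mathbb{N}_{1:n_x}$, and $\bigcup_{i=1}^{n_x}\widetilde{\mathsf{Out}}_{d_i}(\mathcal{X}_{\mathsf{q}1};\widetilde{\gamma}_i)\subseteq\mathcal{X}_{\mathsf{q}2}$ holds iff $\widetilde{\mathsf{Out}}_{d_i}(\mathcal{X}_{\mathsf{q}1};\widetilde{\gamma}_i)\subseteq\mathcal{X}_{\mathsf{q}2}$ for every $i\in\mathbb{N}_{1:n_x}$. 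Hence it suffices to prove, for each fixed $i$, the equivalence $\mathcal{X}_{\mathsf{q}1}\subseteq\widetilde{\mathsf{Int}}_{d_i}(\mathcal{X}_{\mathsf{q}2};\widetilde{\gamma}_i)\iff\widetilde{\mathsf{Out}}_{d_i}(\mathcal{X}_{\mathsf{q}1};\widetilde{\gamma}_i)\subseteq\mathcal{X}_{\mathsf{q}2}$, and then take the conjunction over $i\in\mathbb{N}_{1:n_x}$.

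For the fixed-$i$ equivalence, the key is to observe that both inclusions are equivalent to the single common statement $(\star)$: for every $x\in\mathcal{X}_{\mathsf{q}1}$ and every lattice point $x'\in[\mathbb{R}^{n_x}]_{\eta_x}$ with $d_i(x,x')\le\widetilde{\gamma}_i$, one has $x'\in\mathcal{X}_{\mathsf{q}2}$. That the right inclusion is equivalent to $(\star)$ is immediate from the definition of $\widetilde{\mathsf{Out}}_{d_i}(\mathcal{X}_{\mathsf{q}1};\widetilde{\gamma}_i)$ as the set of $x'$ admitting some $x\in\mathcal{X}_{\mathsf{q}1}$ with $x'\in\widetilde{\mathcal{B}}_{d_i}(x;\widetilde{\gamma}_i)$. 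For the left inclusion: if $\mathcal{X}_{\mathsf{q}1}\subseteq\widetilde{\mathsf{Int}}_{d_i}(\mathcal{X}_{\mathsf{q}2};\widetilde{\gamma}_i)$, then each $x\in\mathcal{X}_{\mathsf{q}1}$ satisfies $\widetilde{\mathcal{B}}_{d_i}(x;\widetilde{\gamma}_i)\subseteq\mathcal{X}_{\mathsf{q}2}$, which is exactly $(\star)$; conversely, given $(\star)$, fix $x\in\mathcal{X}_{\mathsf{q}1}$ and note that $x\in[\mathbb{R}^{n_x}]_{\eta_x}$ and $d_i(x,x)=0\le\widetilde{\gamma}_i$ (using $\widetilde{\gamma}_i\ge 0$, which holds since $\widetilde{\gamma}_i$ is defined in \eqref{contractiveparam} as a square root), so $x$ lies in its own discrete ball; hence $(\star)$ yields both $x\in\mathcal{X}_{\mathsf{q}2}$ and $\widetilde{\mathcal{B}}_{d_i}(x;\widetilde{\gamma}_i)\subseteq\mathcal{X}_{\mathsf{q}2}$, i.e., $x\in\widetilde{\mathsf{Int}}_{d_i}(\mathcal{X}_{\mathsf{q}2};\widetilde{\gamma}_i)$, and since $x$ was arbitrary the inclusion follows. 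Reassembling, the two index-wise inclusions are equivalent for every $i$, so their conjunctions over $i\in\mathbb{N}_{1:n_x}$ — which, by the first step, are precisely the left- and right-hand sides of \eqref{intoutdis} — are equivalent, proving the lemma.

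The part requiring care is not analytical but bookkeeping: one must correctly commute the universal quantifier over $i$ past the intersection/union, and must not forget that membership in $\widetilde{\mathsf{Int}}_{d_i}(\mathcal{X}_{\mathsf{q}2};\widetilde{\gamma}_i)$ also demands $x\in\mathcal{X}_{\mathsf{q}2}$ itself — which here is automatic because the center of a discrete ball always belongs to that ball (as $\widetilde{\gamma}_i\ge 0$). Beyond this nonnegativity, the explicit form of $\widetilde{\gamma}_i$ in \eqref{contractiveparam} is not exploited in this proof; it is relevant only for the subsequent results that invoke this lemma.
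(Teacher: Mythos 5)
Your proof is correct, and it takes a cleaner route than the paper's. The paper's own proof unfolds the same definitions but works with all indices simultaneously: it passes from the left-hand inclusion to the condition ``$x_{\mathsf{q}1}\in\mathcal{X}_{\mathsf{q}1}$ and $d_i(x_{\mathsf{q}1},x_{\mathsf{q}2})\le\widetilde{\gamma}_i$ for all $i$'' and then asserts (in \req{outtransformdis}) that this condition is \emph{equivalent} to $x_{\mathsf{q}2}\in\bigcup_{i}\widetilde{\mathsf{Out}}_{d_i}(\mathcal{X}_{\mathsf{q}1};\widetilde{\gamma}_i)$, invoking the explicit form \req{contractiveparam} of $\widetilde{\gamma}_i$ and the lattice geometry to justify the passage; as written, that step equates a ``for all $i$'' condition with membership in a union over $i$ (an ``exists $i$'' condition), which is delicate. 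Your index-wise decomposition sidesteps this entirely: since a subset of a finite intersection and a superset of a finite union both split into per-index inclusions, it suffices to prove the single-index equivalence $\mathcal{X}_{\mathsf{q}1}\subseteq\widetilde{\mathsf{Int}}_{d_i}(\mathcal{X}_{\mathsf{q}2};\widetilde{\gamma}_i)\iff\widetilde{\mathsf{Out}}_{d_i}(\mathcal{X}_{\mathsf{q}1};\widetilde{\gamma}_i)\subseteq\mathcal{X}_{\mathsf{q}2}$, and your common reformulation $(\star)$ establishes it purely from the definitions of the discrete ball, $\widetilde{\mathsf{Int}}_{d_i}$, and $\widetilde{\mathsf{Out}}_{d_i}$. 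You also correctly handle the one genuinely subtle point, namely that membership in $\widetilde{\mathsf{Int}}_{d_i}(\mathcal{X}_{\mathsf{q}2};\widetilde{\gamma}_i)$ additionally requires the center to lie in $\mathcal{X}_{\mathsf{q}2}$, which follows because the center belongs to its own ball when $\widetilde{\gamma}_i\ge 0$. What your approach buys is generality and transparency: the lemma holds for any nonnegative radii, and the specific value \req{contractiveparam} plays no role here (it matters only in the results that later consume this lemma); what it gives up is nothing, since the paper's detour through the componentwise lattice bound is not needed for the stated equivalence.
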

For the proof of \rlem{intoutlemdis}, see \rapp{intoutlemdisproof}. \rlem{intoutlemdis} is useful in a theoretical analysis of the set invariance in the later section. Given contractive parameters $\widetilde{\gamma}$ by \req{contractiveparam} in addition to the quadruple $\mathsf{q} = ({\eta}_x, {\eta}_u, \varepsilon)$, 
the augmented symbolic model denoted by ${\Sigma}_{\mathsf{q}\tilde{\gamma}}$ is formally defined as follows:
\begin{defn}\label{symbolictransaug}
\normalfont
Let $\mathcal{D}_{N} = \{\mathcal{D}_{N, i}\}_{i = 1}^{n_x}$ be the training dataset and $\Sigma$ be the transition system in \rdef{realtrans}.
Given $\mathsf{q} = ({ \eta}_x, { \eta}_u, \varepsilon)$, let $\Sigma_\mathsf{q}$ be the symbolic model in \rdef{symbolictrans}. Moreover, given $\widetilde{\gamma}$ by \req{contractiveparam}, an augmented symbolic model for $\Sigma$ is defined as a triple ${\Sigma}_{\mathsf{q}\tilde{\gamma}} = (\mathcal{X}_{\mathsf{q}\tilde{\gamma}}, \mathcal{U}_{\mathsf{q}\tilde{\gamma}}, G_{\mathsf{q}\tilde{\gamma}})$ consisting of:
\begin{itemize}
    \item $\mathcal{X}_{\mathsf{q}\tilde{\gamma}} = [\mathbb{R}^{n_x}]_{{ \eta}_x}$ is a set of states;
    \item $\mathcal{U}_{\mathsf{q}\tilde{\gamma}} = [\mathcal{U}]_{{ \eta}_u}$ is a set of inputs;
    \item $G_{\mathsf{q}\tilde{\gamma}}: \mathcal{X}_{\mathsf{q}\tilde{\gamma}} \times \mathcal{U}_{\mathsf{q}\tilde{\gamma}} \to 2^{\mathcal{X}_{\mathsf{q}\tilde{\gamma}}}$ is a transition map, where $\widetilde{{x}}_{\mathsf{q}\tilde{\gamma}}^+ \in G_{\mathsf{q}\tilde{\gamma}}({x}_{\mathsf{q}\tilde{\gamma}}, {u}_{\mathsf{q}\tilde{\gamma}})$ iff ${x}_{\mathsf{q}\tilde{\gamma}}^+ \in  \bigcup_{i = 1}^{n_x} \widetilde{\mathsf{Out}}_{d_i}(G_\mathsf{q}({x}_{\mathsf{q}\tilde{\gamma}}, {u}_{\mathsf{q}\tilde{\gamma}}); \widetilde{\gamma}_i)$.
    \qedwhite
\end{itemize}
\end{defn}

As for the symbolic model ${\Sigma}_{\mathsf{q}\tilde{\gamma}}$ and $\Sigma_\mathsf{q}$, we have the result that there exits an $0$-ASR from ${\Sigma}_{\mathsf{q}\tilde{\gamma}}$ to $\Sigma_\mathsf{q}$ as follows:
\begin{lem}\label{0asrlem}
\normalfont
Given the training dataset $\mathcal{D}_N = \{\mathcal{D}_{N, i}\}_{i = 1}^{n_x}$, and suppose that \ras{rkhsas} holds. Let $\Sigma$ be the transition system in \rdef{realtrans}.
Given $\mathsf{q} = ({\eta}_x, { \eta}_u, \varepsilon)$ and $\widetilde{\gamma}$ in \req{contractiveparam}, let $\Sigma_\mathsf{q}$ and $\Sigma_{\mathsf{q}\tilde{\gamma}}$ be the symbolic model for $\Sigma$ in \rdef{symbolictrans} and \rdef{symbolictransaug}, respectively.
Then, the relation
\begin{align}\label{0asr}
R(0) = \{({x}_{\mathsf{q}\tilde{\gamma}}, {x}_\mathsf{q}) \in \mathcal{X}_{\mathsf{q}\tilde{\gamma}}  \times \mathcal{X}_\mathsf{q} | \ d_i({x}_{\mathsf{q}\tilde{\gamma}}, {x}_\mathsf{q}) = 0, \ \forall i \in \mathbb{N}_{1: n_x} \}
\end{align}
is a $0$-ASR from ${\Sigma}_{\mathsf{q}\tilde{\gamma}}$ to $\Sigma_\mathsf{q}$.
\qedwhite
\end{lem}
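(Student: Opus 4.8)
The plan is to verify directly the three defining conditions (C.1)--(C.3) of an $\varepsilon$-ASR in \rdef{asrdef} with $\varepsilon = \{\epsilon_i\}_{i=1}^{n_x}$, $\epsilon_i = 0$ for all $i \in \mathbb{N}_{1: n_x}$, taking $\Sigma_a = \Sigma_{\mathsf{q}\tilde{\gamma}}$ and $\Sigma_b = \Sigma_\mathsf{q}$. The first thing I would record is a structural observation: by \rdef{symbolictrans} and \rdef{symbolictransaug} the two symbolic models share the same state and input sets, $\mathcal{X}_{\mathsf{q}\tilde{\gamma}} = \mathcal{X}_\mathsf{q} = [\mathbb{R}^{n_x}]_{\eta_x}$ and $\mathcal{U}_{\mathsf{q}\tilde{\gamma}} = \mathcal{U}_\mathsf{q} = [\mathcal{U}]_{\eta_u}$, and since each $d_i$ is a metric we have $d_i(x, x') = 0 \iff x = x'$. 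Hence $R(0)$ is precisely the identity relation on $\mathcal{X}_\mathsf{q}$, i.e.\ $({x}_{\mathsf{q}\tilde{\gamma}}, {x}_\mathsf{q}) \in R(0) \iff {x}_{\mathsf{q}\tilde{\gamma}} = {x}_\mathsf{q}$. With this, (C.1) is immediate by taking ${x}_\mathsf{q} = {x}_{\mathsf{q}\tilde{\gamma}}$ (which is a legitimate element of $\mathcal{X}_\mathsf{q}$), and (C.2) is immediate as well since $d_i({x}_{\mathsf{q}\tilde{\gamma}}, {x}_\mathsf{q}) = 0 \le 0 = \epsilon_i$ whenever $({x}_{\mathsf{q}\tilde{\gamma}}, {x}_\mathsf{q}) \in R(0)$.

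Next I would establish a small claim that does all the work for (C.3): for every ${x} \in \mathcal{X}_\mathsf{q}$ and ${u} \in \mathcal{U}_\mathsf{q}$ one has $G_\mathsf{q}({x}, {u}) \subseteq G_{\mathsf{q}\tilde{\gamma}}({x}, {u})$, and moreover $G_{\mathsf{q}\tilde{\gamma}}({x}, {u}) \neq \varnothing \iff G_\mathsf{q}({x}, {u}) \neq \varnothing$. Both follow directly from \rdef{symbolictransaug}, which gives $G_{\mathsf{q}\tilde{\gamma}}({x}, {u}) = \bigcup_{i=1}^{n_x} \widetilde{\mathsf{Out}}_{d_i}(G_\mathsf{q}({x},{u}); \widetilde{\gamma}_i)$: since $d_i(a, a) = 0 \le \widetilde{\gamma}_i$, the definition of $\widetilde{\mathsf{Out}}_{d_i}(\cdot; \widetilde{\gamma}_i)$ makes it extensive, $\mathcal{A} \subseteq \widetilde{\mathsf{Out}}_{d_i}(\mathcal{A}; \widetilde{\gamma}_i)$ for every $\mathcal{A}$, so $G_\mathsf{q}({x},{u}) \subseteq \widetilde{\mathsf{Out}}_{d_1}(G_\mathsf{q}({x},{u}); \widetilde{\gamma}_1) \subseteq G_{\mathsf{q}\tilde{\gamma}}({x},{u})$; and $\widetilde{\mathsf{Out}}_{d_i}(\varnothing; \widetilde{\gamma}_i) = \varnothing$ yields the (non)emptiness equivalence. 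An immediate consequence is $\mathcal{U}_{\mathsf{q}\tilde{\gamma}}({x}) = \mathcal{U}_\mathsf{q}({x})$ for every ${x}$.

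For (C.3) I would then take an arbitrary $({x}_{\mathsf{q}\tilde{\gamma}}, {x}_\mathsf{q}) \in R(0)$, so ${x}_{\mathsf{q}\tilde{\gamma}} = {x}_\mathsf{q} =: {x}$, and an arbitrary ${u}_{\mathsf{q}\tilde{\gamma}} \in \mathcal{U}_{\mathsf{q}\tilde{\gamma}}({x})$, and choose ${u}_\mathsf{q} := {u}_{\mathsf{q}\tilde{\gamma}}$; by the claim above ${u}_\mathsf{q} \in \mathcal{U}_{\mathsf{q}\tilde{\gamma}}({x}) = \mathcal{U}_\mathsf{q}({x})$, so this choice is admissible. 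Then for every ${x}_\mathsf{q}' \in G_\mathsf{q}({x}, {u}_\mathsf{q}) = G_\mathsf{q}({x}, {u}_{\mathsf{q}\tilde{\gamma}})$, the inclusion $G_\mathsf{q}({x}, {u}_{\mathsf{q}\tilde{\gamma}}) \subseteq G_{\mathsf{q}\tilde{\gamma}}({x}, {u}_{\mathsf{q}\tilde{\gamma}})$ gives ${x}_\mathsf{q}' \in G_{\mathsf{q}\tilde{\gamma}}({x}, {u}_{\mathsf{q}\tilde{\gamma}})$, so taking ${x}_{\mathsf{q}\tilde{\gamma}}' := {x}_\mathsf{q}'$ produces a successor of ${x}_{\mathsf{q}\tilde{\gamma}}$ under ${u}_{\mathsf{q}\tilde{\gamma}}$ with $d_i({x}_{\mathsf{q}\tilde{\gamma}}', {x}_\mathsf{q}') = 0$ for all $i$, i.e.\ $({x}_{\mathsf{q}\tilde{\gamma}}', {x}_\mathsf{q}') \in R(0)$. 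This verifies (C.3) and completes the argument.

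I do not expect a genuine obstacle here: the proof is purely set-theoretic, with identity witnesses throughout. The two points requiring care are (i) the \emph{direction} of the simulation --- $\Sigma_{\mathsf{q}\tilde{\gamma}}$ plays the abstract role $\Sigma_a$ precisely because it has \emph{more} transitions than $\Sigma_\mathsf{q}$ --- and (ii) the bookkeeping that $\widetilde{\mathsf{Out}}_{d_i}(\cdot; \widetilde{\gamma}_i)$ is extensive and preserves (non)emptiness. \ras{rkhsas} enters only implicitly, through \rlem{errorbound}, to guarantee that $\mathcal{F}$ and hence the transition systems $\Sigma$, $\Sigma_\mathsf{q}$, $\Sigma_{\mathsf{q}\tilde{\gamma}}$ are well defined; it is not used in the verification of the ASR conditions themselves.
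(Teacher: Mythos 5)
Your proposal is correct and follows essentially the same route as the paper's proof: both reduce $R(0)$ to the identity relation on the common state set, dispose of (C.1)--(C.2) immediately, and obtain (C.3) from the inclusion $G_\mathsf{q}(x,u) \subseteq G_{\mathsf{q}\tilde{\gamma}}(x,u)$ implied by the extensivity of $\widetilde{\mathsf{Out}}_{d_i}(\cdot;\widetilde{\gamma}_i)$. Your explicit check that $\mathcal{U}_{\mathsf{q}\tilde{\gamma}}(x)=\mathcal{U}_\mathsf{q}(x)$ is a small piece of bookkeeping the paper leaves implicit, but it does not change the argument.
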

For the proof of \rlem{0asrlem}, see \rapp{0asrlemproof}. Using \rprop{varepsilonasrprop} and \rlem{0asrlem}, and combining $0$-ASR from $\Sigma_{\mathsf{q}\tilde{\gamma}}$ to $\Sigma_{\mathsf{q}}$ with $\varepsilon$-ASR from $\Sigma_{\mathsf{q}}$ to $\Sigma$, it is trivial that there exists an $\varepsilon$-ASR from $\Sigma_{\mathsf{q}\tilde{\gamma}}$ to $\Sigma$ as in \cite{zamani2011symbolic}. 

\subsection{Construction of terminal set}\label{constterminalsetsec}
% \begin{figure}[!t]
\begin{algorithm}
\caption{Terminal set construction via symbolic model}\label{safetygame}
\textbf{Input:} $\mathcal{X}$ (specification set), $\mathcal{D}_{N}$ (training dataset), $\eta_x, \eta_u, \varepsilon, \widetilde{\gamma}$ (some parameters for symbolic model), $\gamma_i$ for $i \in \mathbb{N}_{1: n_x}$ (a set of scalars);

\textbf{Output:} $\mathcal{X}_f$ (terminal set), $\mathcal{X}_S$ ($\tilde{\gamma}$-contractive set), $C$ (refined controller);
\begin{algorithmic}[1]

\State $\mathsf{q} \leftarrow (\eta_x, \eta_u, \varepsilon)$;\label{tripleline}

\State ${\Sigma}_{\mathsf{q}\tilde{\gamma}} \leftarrow (\mathcal{X}_{\mathsf{q}\tilde{\gamma}}, \mathcal{U}_{\mathsf{q}\tilde{\gamma}}, G_{\mathsf{q}\tilde{\gamma}})$;\label{symbolicline}

\State $\ell \leftarrow 0$;

\State $\mathcal{Q}_{\ell} \leftarrow \left[\bigcap_{i=1}^{n_x}\mathsf{Int}_{d_i}(\mathcal{X}; \epsilon_i)\right]_{{\eta}_x}$\label{finitestatesetline};

\Repeat\label{solvesafetygamestart}
    \State $\mathcal{Q}_{\ell + 1} \leftarrow {\rm Pre}_{{\Sigma}_{\mathsf{q}\tilde{\gamma}}}(\mathcal{Q}_{\ell})$; \label{predecessoralg}
    \State $\ell \leftarrow \ell + 1$;
\Until{$\mathcal{Q}_{\ell - 1} = \mathcal{Q}_{\ell}$;}\label{solvesafetygameend}

\State $\mathcal{X}_{S, {\mathsf{q}\tilde{\gamma}}} \leftarrow \mathcal{Q}_{\ell};$\label{compinvariantdisline}

\State $\mathcal{X}_S \leftarrow \{{x} \in \mathcal{X}: \ ({x}_{\mathsf{q}\tilde{\gamma}}, {x}) \in R(\varepsilon), \exists{x}_{\mathsf{q}\tilde{\gamma}} \in \mathcal{X}_{S, {\mathsf{q}\tilde{\gamma}}}\};$ \label{compinvariantline}

\State $C_{\mathsf{q}\tilde{\gamma}} \leftarrow \{{u}_{\mathsf{q}\tilde{\gamma}} \in \mathcal{U}_{\mathsf{q}\tilde{\gamma}}: \widetilde{G}({x}_{\mathsf{q}\tilde{\gamma}}, {u}_{\mathsf{q}\tilde{\gamma}}) \subseteq \mathcal{X}_{S, {\mathsf{q}\tilde{\gamma}}}, \ \forall {x}_{\mathsf{q}\tilde{\gamma}} \in \mathcal{X}_{S, {\mathsf{q}\tilde{\gamma}}}\};$\label{descontroller}

\State $C \leftarrow \{C_{\mathsf{q}\tilde{\gamma}}({x}_{\mathsf{q}\tilde{\gamma}}): ({x}_{\mathsf{q}\tilde{\gamma}}, {x}) \in R(\varepsilon), \ \forall {x} \in \mathcal{X}_S \}$; \label{refinecontroller}

\State $\mathcal{X}_f \leftarrow [\bigcap_{i=1}^{n_x}\mathsf{Int}_{\tilde{\gamma}_i}(\mathcal{X}_S), \bigcap_{i=1}^{n_x}\mathsf{Int}_{\gamma_i}(\mathcal{X}_S)]$;\label{terminalsetline}

\State \textbf{return} $\mathcal{X}_f, \mathcal{X}_S, C$;
\end{algorithmic}
\end{algorithm}

Let us now construct the terminal set $\mathcal{X}_f$.  %via the symbolic model ${\Sigma}_{\mathsf{q}\tilde{\gamma}}$. 
Given the training dataset $\mathcal{D}_{N}$, the triple
$\mathsf{q} = ({\eta}_x, { \eta}_u, \varepsilon)$, and $\widetilde{\gamma}$ in \req{contractiveparam}, let $\Sigma_{\mathsf{q}\tilde{\gamma}}$ be the symbolic model for $\Sigma$. Moreover, let $\mathcal{X} \subset \mathbb{R}^{n_x}$ be a given specification set, where it is assumed that $\mathcal{X}$ is compact and can be either convex or non-convex. Then, using the symbolic model ${\Sigma}_{\mathsf{q}\tilde{\gamma}}$, we can find the terminal set $\mathcal{X}_f$ in the specification set $\mathcal{X}$ by investigating a $\tilde{\gamma}$-contractive set $\mathcal{X}_S$ and the corresponding controller $C$ by a safety game (see, e.g., \cite{tabuada2009verification}). 

The algorithm of the safety game is illustrated in \ralg{safetygame} and can be incorporated into the Update phase in \ralg{itertask}. The inputs for \ralg{safetygame} are $\mathcal{X}$ (specification set), $\mathcal{D}_{N}$ (training dataset), $\eta_x, \eta_u, \varepsilon, \widetilde{\gamma}$ (some parameters for symbolic model), and $\gamma_i$ for $i \in \mathbb{N}_{1: n_x}$ (a set of scalars). The algorighm starts by constructing the symbolic model $\Sigma_{\mathsf{q}\tilde{\gamma}}$ with $\mathsf{q} = (\eta_x, \eta_u, \varepsilon)$ and $\tilde{\gamma}$ in \req{contractiveparam}, and computing a finite set $\mathcal{Q}_0$ of the states from $\mathcal{X}$ as $\mathcal{Q}_0 = \left[\bigcap_{i=1}^{n_x}\mathsf{Int}_{d_i}(\mathcal{X}; \epsilon_i)\right]_{{\eta}_x}$ (\rline{tripleline} - \rline{finitestatesetline}). The algorithm proceeds to solving a safety game by applying the \textit{predecessor operator} ${\rm Pre}_{{\Sigma}_{\mathsf{q}\tilde{\gamma}}}: 2^{\mathcal{X}_{\mathsf{q}\tilde{\gamma}}} \to 2^{\mathcal{X}_{\mathsf{q}\tilde{\gamma}}}$ defined as
\begin{align}\label{predecessordef}
{\rm Pre}_{{\Sigma}_{\mathsf{q}\tilde{\gamma}}}(\mathcal{Q}) = \{{x}_{\mathsf{q}\tilde{\gamma}} \in \mathcal{Q}: G_{\mathsf{q}\tilde{\gamma}}({x}_{\mathsf{q}\tilde{\gamma}}, {u}_{\mathsf{q}\tilde{\gamma}}) \subseteq Q, \exists {u}_{\mathsf{q}\tilde{\gamma}} \in \mathcal{U}_{\mathsf{q}\tilde{\gamma}}\},
\end{align}
for a given $\mathcal{Q} \subseteq \mathcal{X}_{\mathsf{q}\tilde{\gamma}}$ (\rline{solvesafetygamestart} - \rline{solvesafetygameend}). The operator ${\rm Pre }_{{\Sigma}_{\mathsf{q}\tilde{\gamma}}}(\mathcal{Q})$ represents all the state ${x}_{\mathsf{q}\tilde{\gamma}} \in \mathcal{Q}$ for which there exist control inputs $u_{\mathsf{q}\tilde{\gamma}} \in \mathcal{U}_{\mathsf{q}\tilde{\gamma}}$ such that the $u_{\mathsf{q}\tilde{\gamma}}$-successors are in $\mathcal{Q}$. Therefore, the operator ${\rm Pre }_{{\Sigma}_{\mathsf{q}\tilde{\gamma}}}(\mathcal{Q})$ provides a collection of the states, where the states of the symbolic model ${\Sigma}_{\mathsf{q}\tilde{\gamma}}$ can remain in $\mathcal{Q}$. 
Hence, using the $\varepsilon$-ASR from ${\Sigma}_{\mathsf{q}\tilde{\gamma}}$ to $\Sigma$, if $\mathcal{X}_{S, {\mathsf{q}\tilde{\gamma}}} \neq \varnothing$, the set $\mathcal{X}_S$ is obtained from the fixed point set of $\mathcal{X}_{S, {\mathsf{q}\tilde{\gamma}}}$ as a controlled invariant set according to \rdef{invariantdef} (\rline{compinvariantdisline} - \rline{compinvariantline}). Moreover, the controller $C$, by which the states of $\Sigma$ can remain in $\mathcal{X}_S$, is refined from the fixed point set of $C_{{\mathsf{q}\tilde{\gamma}}}$ (\rline{descontroller} - \rline{refinecontroller}). Finally, the terminal set $\mathcal{X}_f$ is obtained by extracting a set between  $\bigcap_{i=1}^{n_x}\mathsf{Int}_{d_i}(\mathcal{X}_S; \widetilde{\gamma}_i)$ and $ \bigcap_{i=1}^{n_x}\mathsf{Int}_{d_i}(\mathcal{X}_S; \widetilde{\gamma}_i)$ (\rline{terminalsetline}). Note that \ralg{safetygame} is guaranteed to terminate within a finite number of iterations, since both $\left[\bigcap_{i=1}^{n_x}\mathsf{Int}_{d_i}(\mathcal{X}; \epsilon_i)\right]_{{\eta}_x}$ and $[\mathcal{U}]_{{ \eta}_u}$ are finite. The following result follows that $R(\varepsilon)$ is the $\varepsilon$-ASR from ${\Sigma}_{\mathsf{q}\tilde{\gamma}}$ to $\Sigma$, and thus the proof is omitted (see, e.g., \cite{tabuada2009verification}).
\begin{lem}\label{invariantlem}
\normalfont
Let $\mathcal{X}$, $\mathcal{D}_N$, $\mathsf{q} = ({\eta}_x, { \eta}_u, \varepsilon)$, and $\widetilde{\gamma}$ in \req{contractiveparam} be the inputs for \ralg{safetygame}.
Suppose that \ralg{safetygame} is implemented and we obtain $\mathcal{X}_S \neq \varnothing$. 
Then, $\mathcal{X}_S$ is a controlled invariant set in $\mathcal{X}$ according to \rdef{invariantdef} and $C$ is the corresponding controller.
\qedwhite
\end{lem}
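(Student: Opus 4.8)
The plan is to run the classical safety-game plus alternating-simulation refinement argument. At the symbolic level, I would show that the set $\mathcal{X}_{S,\mathsf{q}\tilde\gamma}$ returned by \ralg{safetygame} is a controlled invariant set of the augmented symbolic model $\Sigma_{\mathsf{q}\tilde\gamma}$ (under the controller $C_{\mathsf{q}\tilde\gamma}$), and then transport this invariance to the concrete system $\Sigma$ of \rdef{realtrans} through the $\varepsilon$-ASR from $\Sigma_{\mathsf{q}\tilde\gamma}$ to $\Sigma$ obtained by composing the $0$-ASR of \rlem{0asrlem} with the $\varepsilon$-ASR of \rprop{varepsilonasrprop}. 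Since \rdef{realtrans} gives $G(x,u)=\mathcal{F}(x,u;\mathcal{D}_N)\oplus\mathcal{W}$, the concrete-level invariance statement ``$G(x,u)\subseteq\mathcal{X}_S$'' is exactly the condition required in \rdef{invariantdef}, with refined controller $C$.

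\emph{Symbolic level.} Because $\mathcal{Q}_0=\bigl[\bigcap_{i=1}^{n_x}\mathsf{Int}_{d_i}(\mathcal{X};\epsilon_i)\bigr]_{\eta_x}$ and $\mathcal{U}_{\mathsf{q}\tilde\gamma}=[\mathcal{U}]_{\eta_u}$ are finite (compactness of $\mathcal{X}$ and $\mathcal{U}$) and ${\rm Pre}_{\Sigma_{\mathsf{q}\tilde\gamma}}(\mathcal{Q})\subseteq\mathcal{Q}$ by \req{predecessordef}, the sequence $\mathcal{Q}_0\supseteq\mathcal{Q}_1\supseteq\cdots$ is nonincreasing and stabilizes after finitely many iterations at $\mathcal{X}_{S,\mathsf{q}\tilde\gamma}:=\mathcal{Q}_{\ell^{*}}$ with ${\rm Pre}_{\Sigma_{\mathsf{q}\tilde\gamma}}(\mathcal{X}_{S,\mathsf{q}\tilde\gamma})=\mathcal{X}_{S,\mathsf{q}\tilde\gamma}$. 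Unfolding \req{predecessordef}, this fixed-point identity says exactly that for every $x_{\mathsf{q}\tilde\gamma}\in\mathcal{X}_{S,\mathsf{q}\tilde\gamma}$ there is $u_{\mathsf{q}\tilde\gamma}\in\mathcal{U}_{\mathsf{q}\tilde\gamma}$ with $G_{\mathsf{q}\tilde\gamma}(x_{\mathsf{q}\tilde\gamma},u_{\mathsf{q}\tilde\gamma})\subseteq\mathcal{X}_{S,\mathsf{q}\tilde\gamma}$, i.e. the safety controller $C_{\mathsf{q}\tilde\gamma}$ of \rline{descontroller} is nonempty on $\mathcal{X}_{S,\mathsf{q}\tilde\gamma}$ and $\mathcal{X}_{S,\mathsf{q}\tilde\gamma}$ is controlled invariant for $\Sigma_{\mathsf{q}\tilde\gamma}$; moreover $\mathcal{X}_{S,\mathsf{q}\tilde\gamma}\subseteq\mathcal{Q}_0\subseteq\bigcap_{i=1}^{n_x}\mathsf{Int}_{d_i}(\mathcal{X};\epsilon_i)$.

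\emph{Refinement to $\Sigma$.} Fix any $x\in\mathcal{X}_S$; by \rline{compinvariantline} there is $x_{\mathsf{q}\tilde\gamma}\in\mathcal{X}_{S,\mathsf{q}\tilde\gamma}$ with $(x_{\mathsf{q}\tilde\gamma},x)\in R(\varepsilon)$, where $R(\varepsilon)$ is the composed $\varepsilon$-ASR from $\Sigma_{\mathsf{q}\tilde\gamma}$ to $\Sigma$. Pick $u_{\mathsf{q}\tilde\gamma}\in C_{\mathsf{q}\tilde\gamma}(x_{\mathsf{q}\tilde\gamma})$ (nonempty by the previous step). Applying condition (C.3) of \rdef{asrdef} to the pair $(x_{\mathsf{q}\tilde\gamma},x)$ and the input $u_{\mathsf{q}\tilde\gamma}$ yields a concrete input $u$ — which, by the standard refinement (the discrete input set $\mathcal{U}_{\mathsf{q}\tilde\gamma}=[\mathcal{U}]_{\eta_u}\subseteq\mathcal{U}$ is reused, so $u\in C_{\mathsf{q}\tilde\gamma}(x_{\mathsf{q}\tilde\gamma})=C(x)$ via \rline{refinecontroller}) — such that for every $x^{+}\in G(x,u)$ there exists $x^{+}_{\mathsf{q}\tilde\gamma}\in G_{\mathsf{q}\tilde\gamma}(x_{\mathsf{q}\tilde\gamma},u_{\mathsf{q}\tilde\gamma})$ with $(x^{+}_{\mathsf{q}\tilde\gamma},x^{+})\in R(\varepsilon)$. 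Since $G_{\mathsf{q}\tilde\gamma}(x_{\mathsf{q}\tilde\gamma},u_{\mathsf{q}\tilde\gamma})\subseteq\mathcal{X}_{S,\mathsf{q}\tilde\gamma}$, we have $x^{+}_{\mathsf{q}\tilde\gamma}\in\mathcal{X}_{S,\mathsf{q}\tilde\gamma}$; and from (C.2), $d_i(x^{+}_{\mathsf{q}\tilde\gamma},x^{+})\le\epsilon_i$ for all $i$, while $x^{+}_{\mathsf{q}\tilde\gamma}\in\mathcal{X}_{S,\mathsf{q}\tilde\gamma}\subseteq\bigcap_{i}\mathsf{Int}_{d_i}(\mathcal{X};\epsilon_i)$ gives $\mathcal{B}_{d_i}(x^{+}_{\mathsf{q}\tilde\gamma};\epsilon_i)\subseteq\mathcal{X}$, so $x^{+}\in\mathcal{X}$. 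By the description of $\mathcal{X}_S$ in \rline{compinvariantline}, these two facts imply $x^{+}\in\mathcal{X}_S$. Hence $G(x,u)\subseteq\mathcal{X}_S$, and since $G(x,u)=\mathcal{F}(x,u;\mathcal{D}_N)\oplus\mathcal{W}$ by \rdef{realtrans}, we obtain $\mathcal{F}(x,u;\mathcal{D}_N)\oplus\mathcal{W}\subseteq\mathcal{X}_S$, which is precisely the condition of \rdef{invariantdef}. Thus $\mathcal{X}_S$ is a controlled invariant set with controller $C$; the inclusion $\mathcal{X}_S\subseteq\mathcal{X}$ is built into its definition (or follows from the same ball argument with $x$ in place of $x^{+}$).

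\emph{Main obstacle.} There is no delicate estimate here; the only genuinely substantive ingredient is the existence and correct orientation of the composed $\varepsilon$-ASR from $\Sigma_{\mathsf{q}\tilde\gamma}$ to $\Sigma$ — verifying that chaining the $0$-ASR of \rlem{0asrlem} with the $\varepsilon$-ASR of \rprop{varepsilonasrprop} again yields a relation satisfying (C.1)--(C.3) with precision $\varepsilon$, and that the refined input can be taken equal to the symbolic one at each hop (this is the routine transitivity of ASRs, cf. \cite{zamani2011symbolic}). A secondary, purely notational point is to read the set-valued objects $C_{\mathsf{q}\tilde\gamma}$ and $\widetilde G$ of \rline{descontroller} as the state-indexed safety controller $x_{\mathsf{q}\tilde\gamma}\mapsto\{u_{\mathsf{q}\tilde\gamma}\in\mathcal{U}_{\mathsf{q}\tilde\gamma}:G_{\mathsf{q}\tilde\gamma}(x_{\mathsf{q}\tilde\gamma},u_{\mathsf{q}\tilde\gamma})\subseteq\mathcal{X}_{S,\mathsf{q}\tilde\gamma}\}$ and to note that it returns admissible concrete inputs. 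Everything else is bookkeeping, which is why the detailed proof can be omitted with a reference to \cite{tabuada2009verification}.
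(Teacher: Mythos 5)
Your argument is correct and is exactly the standard safety-game fixed-point plus $\varepsilon$-ASR refinement argument that the paper itself invokes: the paper omits the proof of this lemma entirely, stating that it follows from $R(\varepsilon)$ being an $\varepsilon$-ASR from $\Sigma_{\mathsf{q}\tilde{\gamma}}$ to $\Sigma$ and deferring to \cite{tabuada2009verification}. Your write-up simply supplies the details of that omitted routine argument (including the check that $x^{+}\in\mathcal{X}$ via $\mathcal{X}_{S,\mathsf{q}\tilde{\gamma}}\subseteq\bigl[\bigcap_{i}\mathsf{Int}_{d_i}(\mathcal{X};\epsilon_i)\bigr]_{\eta_x}$), and it is consistent with how the paper uses the same machinery in the proof of \rprop{contractiveprop}.
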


Let us now show that the main result of constructing the terminal set $\mathcal{X}_f$ satisfying \ras{terminalsetas} by using $\varepsilon$-ASR from ${\Sigma}_{\mathsf{q}\tilde{\gamma}}$ to $\Sigma$. The following proposition states that the controlled invariant set $\mathcal{X}_S$ obtained by \ralg{safetygame} is also the $\widetilde{\gamma}$-contractive set according to \rdef{contractivedef}, and all the states in $\mathcal{X}_S$ can drive into the terminal set $\mathcal{X}_f$ by the controller $C$:
% Let us now show the main result that the controlled invariant set $\mathcal{X}_S$ obtained by \ralg{safetygame} is also the $\widetilde{\gamma}$-contractive set according to \rdef{contractivedef}, and all the states in $\mathcal{X}_S$ can drive into the terminal set $\mathcal{X}_f$ by the controller $C$, as follows:
\begin{prop}\label{contractiveprop}
\normalfont
Let $\mathcal{X}$, $\mathcal{D}_N$, $\mathsf{q} = ({\eta}_x, { \eta}_u, \varepsilon)$, and $\widetilde{\gamma}$ in \req{contractiveparam} be the inputs for \ralg{safetygame}.
Suppose that \ralg{safetygame} is implemented and we obtain $\mathcal{X}_S \neq \varnothing$. 
Then, $\mathcal{X}_S$ is a $\widetilde{\gamma}$-contractive set in $\mathcal{X}$ according to \rdef{contractivedef}, and all the states in $\mathcal{X}_S$ can drive into the terminal set $\mathcal{X}_f$ by the corresponding controller $C$.
\qedwhite
\end{prop}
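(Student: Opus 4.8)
The plan is to push the outcome of the discrete safety game on $\Sigma_{\mathsf{q}\tilde{\gamma}}$ back to the continuous dynamics through the $\varepsilon$-ASR and to extract the contraction margin that is deliberately encoded in the augmented transition map $G_{\mathsf{q}\tilde{\gamma}}$; since \rlem{invariantlem} already gives controlled invariance of $\mathcal{X}_S$, the new content is the $\widetilde{\gamma}$-contractivity. First I would fix $x\in\mathcal{X}_S$. By \rline{compinvariantline} of \ralg{safetygame} there is $x_{\mathsf{q}\tilde{\gamma}}\in\mathcal{X}_{S,\mathsf{q}\tilde{\gamma}}$ with $(x_{\mathsf{q}\tilde{\gamma}},x)\in R(\varepsilon)$, and since $\mathcal{X}_{S,\mathsf{q}\tilde{\gamma}}=\mathcal{Q}_\ell$ is the fixed point of ${\rm Pre}_{\Sigma_{\mathsf{q}\tilde{\gamma}}}$, the set $C_{\mathsf{q}\tilde{\gamma}}(x_{\mathsf{q}\tilde{\gamma}})=\{u\in\mathcal{U}_{\mathsf{q}\tilde{\gamma}}:G_{\mathsf{q}\tilde{\gamma}}(x_{\mathsf{q}\tilde{\gamma}},u)\subseteq\mathcal{X}_{S,\mathsf{q}\tilde{\gamma}}\}$ is nonempty; I pick $u\in C_{\mathsf{q}\tilde{\gamma}}(x_{\mathsf{q}\tilde{\gamma}})\subseteq\mathcal{U}_{\mathsf{q}\tilde{\gamma}}\subseteq\mathcal{U}$, which by \rline{refinecontroller} is a valid choice of $u\in C(x)$. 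Unfolding \rdef{symbolictransaug}, $G_{\mathsf{q}\tilde{\gamma}}(x_{\mathsf{q}\tilde{\gamma}},u)=\bigcup_{i=1}^{n_x}\widetilde{\mathsf{Out}}_{d_i}(G_{\mathsf{q}}(x_{\mathsf{q}\tilde{\gamma}},u);\widetilde{\gamma}_i)$, so $G_{\mathsf{q}\tilde{\gamma}}(x_{\mathsf{q}\tilde{\gamma}},u)\subseteq\mathcal{X}_{S,\mathsf{q}\tilde{\gamma}}$ together with \rlem{intoutlemdis} yields
\begin{align}\label{eq:liftdisc}
G_{\mathsf{q}}(x_{\mathsf{q}\tilde{\gamma}},u)\subseteq\bigcap_{i=1}^{n_x}\widetilde{\mathsf{Int}}_{d_i}(\mathcal{X}_{S,\mathsf{q}\tilde{\gamma}};\widetilde{\gamma}_i),
\end{align}
i.e. every $u$-successor of $x_{\mathsf{q}\tilde{\gamma}}$ in $\Sigma_{\mathsf{q}}$ lies $\widetilde{\gamma}_i$-deep inside $\mathcal{X}_{S,\mathsf{q}\tilde{\gamma}}$ for every $i$.

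Next I would transfer \req{eq:liftdisc} to the continuous system $\Sigma$. Take any $x^+\in\mathcal{F}(x,u;\mathcal{D}_N)\oplus\mathcal{W}$; by the over-approximation that underlies \rprop{varepsilonasrprop} (the margin $\mathcal{E}_i=[\pm(b_i\epsilon_i+\eta_{x,i})]$ in \rdef{symbolictrans}, together with \rlem{continuity} and the choice of $\epsilon_i$ in \req{epsilonparam}) there is a lattice point $x^+_{\mathsf{q}}\in G_{\mathsf{q}}(x_{\mathsf{q}\tilde{\gamma}},u)$ with $d_i(x^+_{\mathsf{q}},x^+)\le\epsilon_i$ for all $i$, i.e. $(x^+_{\mathsf{q}},x^+)\in R(\varepsilon)$. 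I then claim $\mathcal{B}_{d_i}(x^+;\widetilde{\gamma}_i)\subseteq\mathcal{X}_S$ for each $i$: for $x'$ with $d_i(x^+,x')\le\widetilde{\gamma}_i$, pick a lattice representative $x'_{\mathsf{q}}$ with $(x'_{\mathsf{q}},x')\in R(\varepsilon)$; the $\widetilde{\gamma}_i$-bloat built into \req{eq:liftdisc} (again via \rlem{intoutlemdis}) keeps $x'_{\mathsf{q}}$ inside $\mathcal{X}_{S,\mathsf{q}\tilde{\gamma}}$, hence $x'\in\mathcal{X}_S$ by \rline{compinvariantline}. Therefore $x^+\in\bigcap_{i=1}^{n_x}\mathsf{Int}_{d_i}(\mathcal{X}_S;\widetilde{\gamma}_i)$, and since $x^+$ was arbitrary, $\mathcal{F}(x,u;\mathcal{D}_N)\oplus\mathcal{W}\subseteq\bigcap_{i=1}^{n_x}\mathsf{Int}_{d_i}(\mathcal{X}_S;\widetilde{\gamma}_i)$, which is exactly \rdef{contractivedef}: $\mathcal{X}_S$ is $\widetilde{\gamma}$-contractive with controller $C$.

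The second assertion is then immediate from \req{ininterminal}/\rline{terminalsetline}: since $\bigcap_{i=1}^{n_x}\mathsf{Int}_{d_i}(\mathcal{X}_S;\widetilde{\gamma}_i)\subseteq\mathcal{X}_f$, the same $u\in C(x)$ gives $\mathcal{F}(x,u;\mathcal{D}_N)\oplus\mathcal{W}\subseteq\mathcal{X}_f$, so $C$ drives every $x\in\mathcal{X}_S$ into $\mathcal{X}_f$ in one step. I expect the delicate part to be the bookkeeping in the middle paragraph: one must match the discrete bloating radius $\widetilde{\gamma}_i$ (defined in \req{contractiveparam} through the integer $\mathsf{z}_i$ and the lattice resolution $\tfrac{2}{\sqrt{n}}\eta_x$) against the ASR precision $\epsilon_i$ and the margin $\mathcal{E}_i$, so that the $\epsilon_i$-sized mismatches between continuous points and their lattice representatives are absorbed without eroding the $\widetilde{\gamma}_i$-contraction; everything else is a direct chase through \rlem{intoutlemdis}, \rprop{varepsilonasrprop}, and the definitions of $\mathcal{X}_S$ and $\mathcal{X}_f$.
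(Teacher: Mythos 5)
Your overall route is the same as the paper's: both start from the fixed-point identity $\mathcal{X}_{S,\mathsf{q}\tilde{\gamma}}={\rm Pre}_{\Sigma_{\mathsf{q}\tilde{\gamma}}}(\mathcal{X}_{S,\mathsf{q}\tilde{\gamma}})$, unfold the augmented transition map $G_{\mathsf{q}\tilde{\gamma}}$, invoke \rlem{intoutlemdis} to turn the $\widetilde{\mathsf{Out}}$-inclusion into $G_{\mathsf{q}}(x_{\mathsf{q}\tilde{\gamma}},u)\subseteq\bigcap_{i=1}^{n_x}\widetilde{\mathsf{Int}}_{d_i}(\mathcal{X}_{S,\mathsf{q}\tilde{\gamma}};\widetilde{\gamma}_i)$, refine through the $\varepsilon$-ASR, and finish the terminal-set claim with \req{ininterminal}. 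Where you go further than the paper is the middle paragraph, where you attempt a pointwise chase of the refinement rather than simply asserting, as the paper does, that $\mathcal{X}_S$ and $\bigcap_{i=1}^{n_x}\mathsf{Int}_{d_i}(\mathcal{X}_S;\widetilde{\gamma}_i)$ are obtained from their discrete counterparts via $R(\varepsilon)$.

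That extra explicitness exposes the one step that is not closed. To show $\mathcal{B}_{d_i}(x^+;\widetilde{\gamma}_i)\subseteq\mathcal{X}_S$ you need a lattice point of $\mathcal{X}_{S,\mathsf{q}\tilde{\gamma}}$ within $\epsilon_i$ of each $x'$ satisfying $d_i(x^+,x')\le\widetilde{\gamma}_i$, and your candidate $x'_{\mathsf{q}}$ only satisfies $d_i(x^+_{\mathsf{q}},x'_{\mathsf{q}})\le d_i(x^+_{\mathsf{q}},x^+)+d_i(x^+,x')+d_i(x',x'_{\mathsf{q}})\le\widetilde{\gamma}_i+2\epsilon_i$, which is not enough to place $x'_{\mathsf{q}}$ in $\widetilde{\mathcal{B}}_{d_i}(x^+_{\mathsf{q}};\widetilde{\gamma}_i)\subseteq\mathcal{X}_{S,\mathsf{q}\tilde{\gamma}}$. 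The margin $\mathcal{E}_i=\left[\pm(b_i\epsilon_i+\eta_{x,i})\right]$ in \rdef{symbolictrans} is consumed in establishing the ASR itself (\rprop{varepsilonasrprop}) and does not supply the missing $2\epsilon_i$ here; absorbing it would require either tying $\widetilde{\gamma}_i$ in \req{contractiveparam} more tightly to $\epsilon_i$ or settling for a continuous contraction radius of $\widetilde{\gamma}_i-2\epsilon_i$. You correctly flag this bookkeeping as the delicate point but leave it open; to be fair, the paper's own proof elides exactly the same step by asserting that the continuous sets and controller ``can be computed'' and ``refined'' from the discrete fixed points without the triangle-inequality accounting, so your attempt matches the published argument in substance and is, if anything, more candid about where the quantitative slack lives.
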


\begin{proof}
Suppose that \ralg{safetygame} is implemented with the inputs $\mathcal{X}$, $\mathcal{D}_N$, $\mathsf{q} = ({\eta}_x, { \eta}_u, \varepsilon)$, and $\widetilde{\gamma}$ in \req{contractiveparam} and the solution of the safety game is obtained as $\mathcal{Q}_{\ell - 1} = \mathcal{Q}_{\ell} = \mathcal{X}_{S, \mathsf{q}\tilde{\gamma}} \neq \varnothing$ for some $\ell \in \mathbb{N}_{>0}$. Then, from the notation of the transition map $G_{\mathsf{q}\tilde{\gamma}}$ in \rdef{symbolictransaug} and the operator ${\rm Pre}_{{\Sigma}_{\mathsf{q}\tilde{\gamma}}}$ in \req{predecessordef}, we have 
\begin{align}\label{discreatesafesettrans}
\mathcal{X}_{S, {\mathsf{q}\tilde{\gamma}}} &= {\rm Pre }_{{\Sigma}_{\mathsf{q}\tilde{\gamma}}}(\mathcal{X}_{S, {\mathsf{q}\tilde{\gamma}}})\notag\\
&=\{{x}_{\mathsf{q}\tilde{\gamma}} \in \mathcal{X}_{S, {\mathsf{q}\tilde{\gamma}}}: G_{\mathsf{q}\tilde{\gamma}}({x}_{\mathsf{q}\tilde{\gamma}}, {u}_{\mathsf{q}\tilde{\gamma}}) \subseteq \mathcal{X}_{S, {\mathsf{q}\tilde{\gamma}}}, \notag \\
& \ \ \ \ \ \ \ \ \ \ \ \ \ \ \ \ \ \ \ \ \ \ \ \ \ \ \ \ \ \ \ \ \ \ \ \ \ \ \ \ \exists {u}_{\mathsf{q}\tilde{\gamma}} \in \mathcal{U}_{\mathsf{q}\tilde{\gamma}}\} \notag \\
&= \{{x}_{\mathsf{q}\tilde{\gamma}} \in \mathcal{X}_{S, {\mathsf{q}\tilde{\gamma}}}: \bigcup_{i = 1}^{n_x}\widetilde{\mathsf{Out}}_{d_i}(G_\mathsf{q}({x}_{\mathsf{q}\tilde{\gamma}}, {u}_{\mathsf{q}\tilde{\gamma}}); \widetilde{\gamma}_i) \subseteq \mathcal{X}_{S, {\mathsf{q}\tilde{\gamma}}}, \notag \\
& \ \ \ \ \ \ \ \ \ \ \ \ \ \ \ \ \ \ \ \ \ \ \ \ \ \ \ \ \ \ \ \ \ \ \ \ \ \ \ \ \exists {u}_{\mathsf{q}\tilde{\gamma}} \in \mathcal{U}_{\mathsf{q}\tilde{\gamma}}\}.
\end{align}
Furthermore, using \rlem{intoutlemdis}, \req{discreatesafesettrans} is then computed as
\begin{align}\label{discreatesafeset}
\mathcal{X}_{S, {\mathsf{q}\tilde{\gamma}}}&= \{{x}_{\mathsf{q}\tilde{\gamma}} \in \mathcal{X}_{S, {\mathsf{q}\tilde{\gamma}}}: G_\mathsf{q}({x}_{\mathsf{q}\tilde{\gamma}}, {u}_{\mathsf{q}\tilde{\gamma}}) \subseteq \bigcap_{i = 1}^{n_x}\widetilde{\mathsf{Int}}_{d_i}(\mathcal{X}_{S, {\mathsf{q}\tilde{\gamma}}}; \widetilde{\gamma}_i), \notag \\
& \ \ \ \ \ \ \ \ \ \ \ \ \ \ \ \ \ \ \ \ \ \ \ \ \ \ \ \ \ \ \ \ \ \ \ \ \ \ \ \ \exists {u}_{\mathsf{q}\tilde{\gamma}} \in \mathcal{U}_{\mathsf{q}\tilde{\gamma}}\}.
\end{align}
Therefore, from \req{discreatesafeset}, we obtain the result that there exist control inputs $u_{\mathsf{q}\tilde{\gamma}} \in \mathcal{U}_{\mathsf{q}\tilde{\gamma}}$ such that all the $u_{\mathsf{q}\tilde{\gamma}}$-successors in the set $\mathcal{X}_{S, {\mathsf{q}\tilde{\gamma}}}$ are inside $\bigcap_{i = 1}^{n_x}\widetilde{\mathsf{Int}}_{d_i}(\mathcal{X}_{S, {\mathsf{q}\tilde{\gamma}}}; \widetilde{\gamma}_i)$. That is, for every $x_{\mathsf{q}\tilde{\gamma}} \in \mathcal{X}_{S, \mathsf{q}\tilde{\gamma}}$, there exists a controller $C_{{\mathsf{q}\tilde{\gamma}}}$ such that ${x}_{\mathsf{q}\tilde{\gamma}} \xrightarrow{{u}_{\mathsf{q}\tilde{\gamma}}} {x}_{\mathsf{q}\tilde{\gamma}}^+ \in \bigcap_{i = 1}^{n_x}\widetilde{\mathsf{Int}}_{d_i}(\mathcal{X}_{S, {\mathsf{q}\tilde{\gamma}}}; \widetilde{\gamma}_i)$, where $u_{\mathsf{q}\tilde{\gamma}} \in C_{{\mathsf{q}\tilde{\gamma}}}({x}_{\mathsf{q}\tilde{\gamma}})$. Moreover, using the $\varepsilon$-ASR from ${\Sigma}_{\mathsf{q}\tilde{\gamma}}$ to $\Sigma$, both $\mathcal{X}_{S}$ and $\bigcap_{i = 1}^{n_x} \mathsf{Int}_{d_i}(\mathcal{X}_{S}; \widetilde{\gamma}_i)$ can be computed from the fixed point set of $\mathcal{X}_{S, {\mathsf{q}\tilde{\gamma}}}$ and $\bigcap_{i = 1}^{n_x}\widetilde{\mathsf{Int}}_{d_i}(\mathcal{X}_{S, {\mathsf{q}\tilde{\gamma}}}; \widetilde{\gamma}_i)$. At the same time, the controller $C$ can be refined from the fixed point set of $C_{{\mathsf{q}\tilde{\gamma}}}$. Hence, all the states in $\mathcal{X}_S$ can drive into the set $\bigcap_{i=1}^{n_x}\mathsf{Int}_{d_i}(\mathcal{X}_S; \tilde{\gamma}_i)$ by applying the controller $C$. Therefore, $\mathcal{X}_S$ is a ${\widetilde{\gamma}}$-contractive set in $\mathcal{X}$. Furthermore, the terminal set $\mathcal{X}_f$ is defined as $\bigcap_{i=1}^{n_x}\mathsf{Int}_{d_i}(\mathcal{X}_S; \tilde{\gamma}_i) \subseteq \mathcal{X}_f$ in \req{terminalsetdef}. Thus, for every $x \in \mathcal{X}_S$, there exists a controller $C$ such that ${x} \xrightarrow{{u}} {x}^+ \in \mathcal{X}_f$, where $u \in C({x})$.
\end{proof}
From \rprop{contractiveprop}, the terminal set $\mathcal{X}_f$ in the OCP \req{ocp} satisfying \ras{terminalsetas} is constructed by \ralg{safetygame} via the symbolic model $\Sigma_{\mathsf{q}\tilde{\gamma}}$.

\section{Numerical Simulation}\label{simsec}

In this section, we demonstrate the effectiveness of the proposed event-triggered MPC in \ralg{itertask} through a numerical simulation. The simulation was conducted on MacOS Big Sur, 8-core Intel Core i9 2.4GHz, 32GB RAM.

\begin{figure*}[t]
    \centering
	\subfigure[Iteration $1$.]{
	    \centering
		\includegraphics[width = 0.3\linewidth]{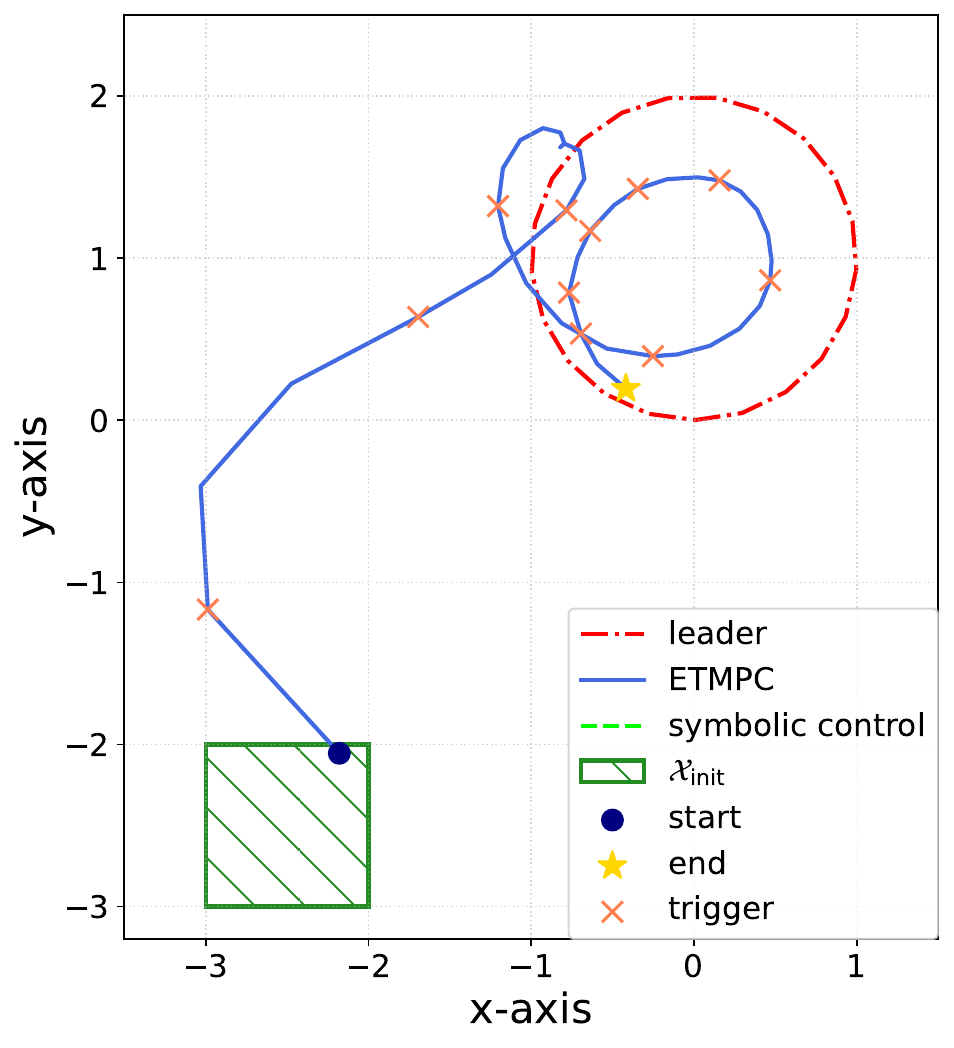}
	}
	\subfigure[Iteration $5$.]{
	    \centering
		\includegraphics[width = 0.3\linewidth]{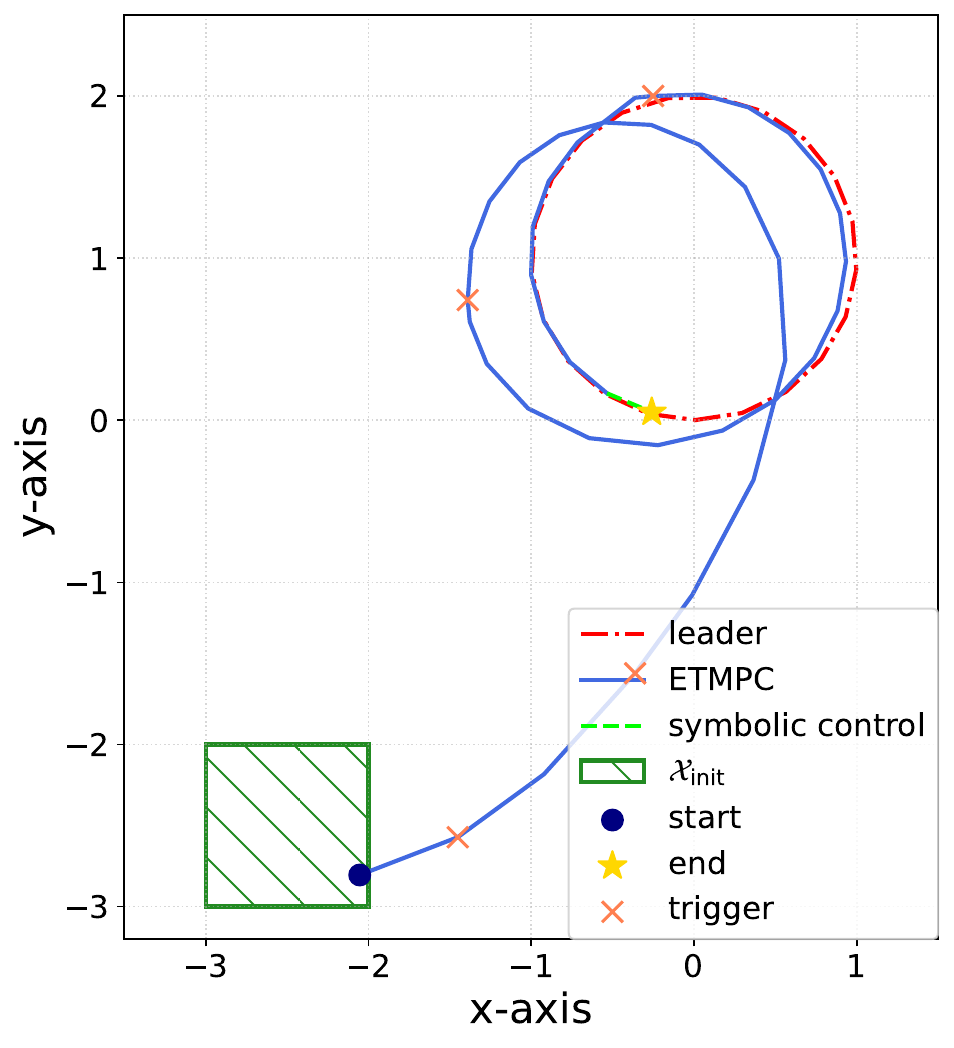}
	}
	\subfigure[Iteration $10$.]{
	    \centering
		\includegraphics[width = 0.3\linewidth]{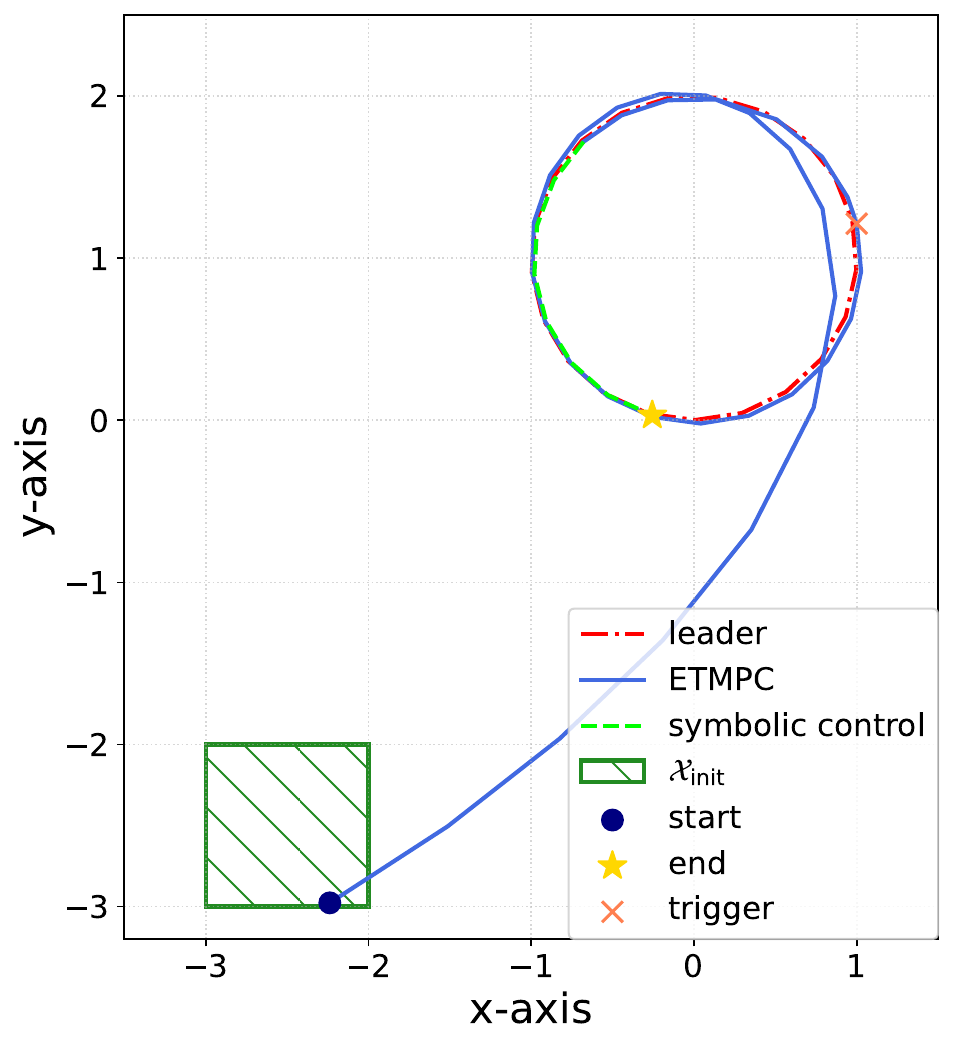}
	}
	\caption{Tracking trajectories of the robot $({\rm x}(t), {\rm y}(t))$ under the event-triggered MPC (blue solid line) and the symbolic controller (green dashed line), the positions where the event-triggered conditions \req{trigger} are violated (orange cross mark), and the reference trajectories $({\rm x}_r(t), {\rm y}_r(t))$ (red dash-dotted line). At each iteration, the initial state of the robot (blue circle mark) is given randomly from $\mathcal{X}_{\rm init}$ (green rectangle with slashes), and the algorithm proceeds to the next iteration after 40 steps have passed (yellow star mark).}
	\label{trajtrigger4}
\end{figure*}

\begin{figure*}[t]
    \centering
	\subfigure[Trajectories of the error distance.]{
	    \centering
		\includegraphics[width = 0.3\linewidth]{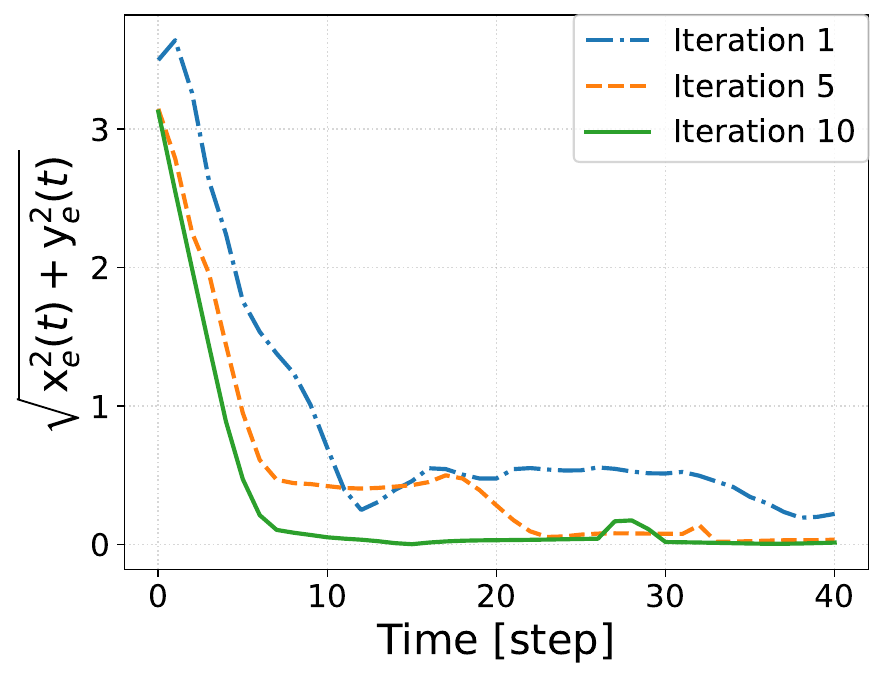}
	}
	\subfigure[Trajectories of the error attitude.]{
	    \centering
		\includegraphics[width = 0.3\linewidth]{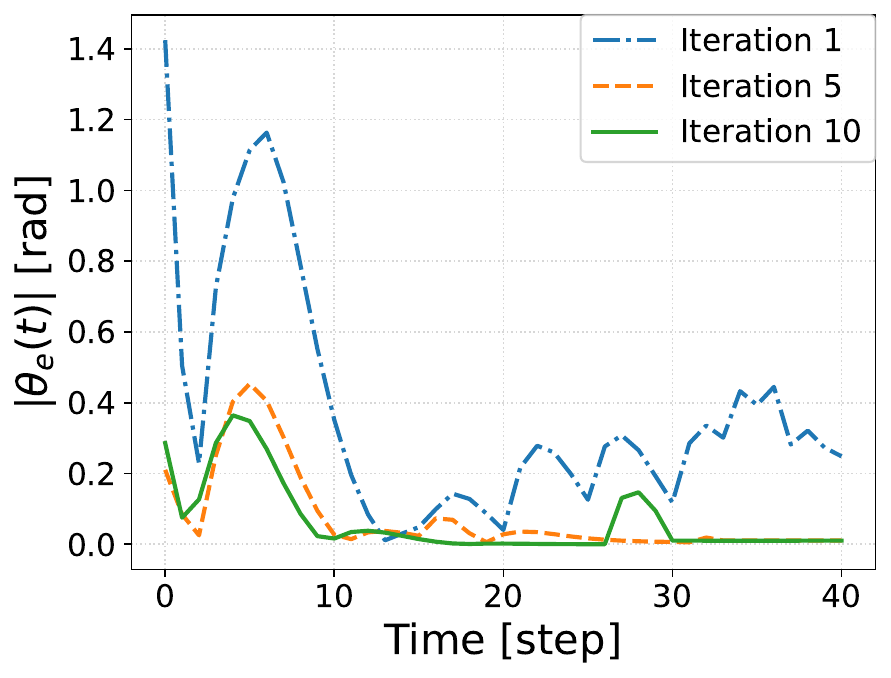}
	}
	\subfigure[Trajectories of the control input.]{
	    \centering
		\includegraphics[width = 0.3\linewidth]{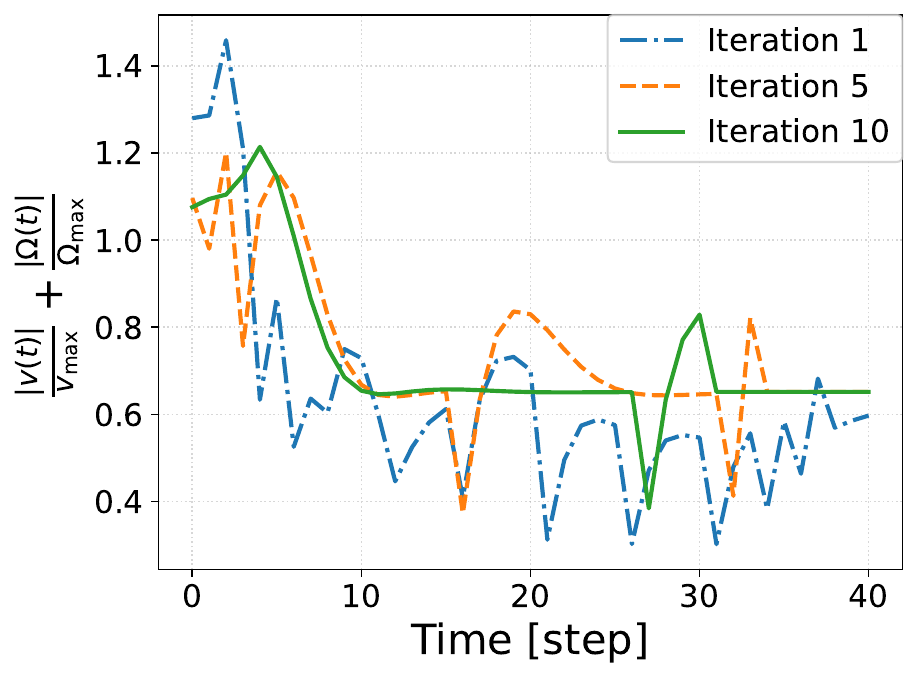}
	}
	\caption{(a) and (b) illustrate, at each iteration, the trajectories of the error distance and attitude between the robot \req{foldyn} and the leader \req{refdyn}, respectively. (c) shows the trajectories of the control input evaluated by the control input constraint at each iteration.}
	\label{trajtrigger5}
\end{figure*}

We consider a leader-follower tracking control problem of the nonholonomic mobile robot formulated by the following unicycle dynamics:
\begin{align}\label{foldyn}
\frac{{\rm d}x(t)}{{\rm d}t}= \left[
\begin{array}{c}
\dot{\rm x}(t) \\
\dot{\rm y}(t) \\
\dot{\theta}(t) \\
\end{array}
\right] = \left[
\begin{array}{cc}
\cos{\theta(t)} & 0\\
\sin{\theta(t)} & 0\\
0 & 1\\
\end{array}
\right]\left[
\begin{array}{c}
v(t) \\
\Omega(t) \\
\end{array}
\right],
\end{align}
for all $t \in \mathbb{R}_{\ge 0}$, where $x(t) = [{\rm x}(t), {\rm y}(t), \theta(t)]^\top \in \mathbb{R}^{3}$ is the state that represents the two-dimensional position at $t$, $({\rm x}(t), {\rm y}(t))$ and the angle $\theta(t)$, and $u(t) = [v(t), \Omega(t)]^\top \in \mathbb{R}^2$ is the control input that represents linear velocity $v(t)$ and angular velocity $\Omega(t)$. %We assume that the dynamics of the robot in \req{foldyn} is unknown. 
Moreover, the control input set is given by $\mathcal{U} = \{[v, \Omega]^\top \in \mathbb{R}^2: |v| \le v_{\rm max} = 3, \ |\Omega| \le \Omega_{\rm max} = \pi \}$.
The control objective in the simulation is to make the trajectory of the robot \req{foldyn} track the following reference trajectory of the leader $x_r(t) = [{\rm x}_r(t), {\rm y}_r(t), \theta_r(t)]^\top \in \mathbb{R}^3$:
\begin{align}\label{refdyn}
\frac{{\rm d}x_r(t)}{{\rm d}t}  &=\left[
\begin{array}{c}
\dot{\rm x}_r(t) \\
\dot{\rm y}_r(t) \\
\dot{\theta}_r(t) \\
\end{array}
\right] = \left[
\begin{array}{cc}
\cos{\theta_r(t)} & 0\\
\sin{\theta_r(t)} & 0\\
0 & 1\\
\end{array}
\right]\left[
\begin{array}{c}
v_r(t) \\
\Omega_r(t) \\
\end{array}
\right],
\end{align}
for all $t \in \mathbb{R}_{\ge 0}$, where the reference linear velocity $v_r(t)$, angular velocity $\Omega_r(t)$, and initial state configuration $x_r(0)$ are given by $v_r(t) = 1$, $\Omega_r(t) = 1$ for all $t \geq 0$, and $x_r(0) = [0, 0, 0]^\top$, respectively. To investigate that the robot \req{foldyn} will follow the reference trajectory \req{refdyn}, we define the error posture  $x_e(t) = [{\rm x}_e(t), {\rm y}_e(t), \theta_e(t)]^\top \in \mathbb{R}^3$ between the $x_r(t)$ and $x(t)$ as
\begin{align}\label{errorpos}
x_e(t) = \left[
\begin{array}{ccc}
\cos{\theta(t)} & \sin{\theta(t)} & 0\\
-\sin{\theta(t)} & \cos{\theta(t)} & 0\\
0 & 0 & 1\\
\end{array}
\right] (x_r(t) - x(t)),
\end{align}
for all $t \in \mathbb{R}_{\ge 0}$ (see, e.g., \cite{kanayama1990stable}). The dynamics of the error posture $x_e(t)$ derived by \req{foldyn} - \req{errorpos} is then given by
\begin{align}\label{errdyn}
\frac{{\rm d}x_e(t)}{{\rm d}t} &= \left[
\begin{array}{c}
\dot{\rm x}_e(t) \\
\dot{\rm y}_e(t) \\
\dot{\theta}_e(t) \\
\end{array}
\right]
=\left[
\begin{array}{c}
\Omega(t) {\rm y}_e(t) - v(t) + v_r(t) \cos{\theta_e(t)} \\
-\Omega(t) {\rm x}_e(t) + v_r(t) \sin{\theta_e(t)}\\
\Omega_r(t) - \Omega(t)\\
\end{array}
\right],
\end{align}
for all $t \in \mathbb{R}_{\ge 0}$. We assume that the error posture in \req{errdyn} is sampled at time interval $\Delta t \in \mathbb{R}_{> 0}$ and makes the transitions under the influence of additive noise $w(t) \in \mathbb{R}^3$. Therefore, we consider the perturbed system of the form:
\begin{align}\label{simdyn}
&x_e(t + 1) = f(x_e(t), u(t)) + w(t), \notag \\
&= \left[
\begin{array}{c}
{\rm x}_e(t) \\
{\rm y}_e(t) \\
{\theta}_e(t) \\
\end{array}
\right] \notag \\
&\ \ \ \ \ + \Delta t \left[
\begin{array}{c}
\Omega(t) {\rm y}_e(t) - v(t) + v_r(t) \cos{\theta_e(t)} \\
-\Omega(t) {\rm x}_e(t) + v_r(t) \sin{\theta_e(t)}\\
\Omega_r(t) - \Omega(t)\\
\end{array}
\right] + w(t)
\end{align}
for all $t \in \mathbb{N}_{\ge 0}$. Here, it is assumed that $\Delta t = 0.3$ and the additive noise is bounded as $w(t) \in \mathcal{W} = \{w \in \mathbb{R}^3: |w_i| \le \sigma_{w_i} = 0.01, \forall i \in \mathbb{N}_{1: 3}\}$. Note that the function $f$ is unknown apriori. 

\begin{table*}
\centering
  \begin{tabular}{|c||c|c|c|c|c|c|c|c|c|c||c|}  \hline
  Iteration & 1 & 2 & 3 & 4 & 5 & 6 & 7 & 8 & 9 & 10 & Periodic \\ \hline \hline
  Number of triggers & 11 & 7 & 8 & 5 & 4 & 4 & 3 & 1 & 2 & 1 &40 \\ \hline
  Number of data points & 25 & 43 & 44 & 49 & 53 & 62 & 76 & 78 & 81 & 82 & 82\\ \hline
  Total computation time [ms] & 582 & 376 & 433 & 291 & 254 & 283 & 179 & 87 & 114 & 74 & 1073 \\ \hline
  \end{tabular}
  \caption{Number of triggers, number of data points of the training dataset, and total computation time of \req{ocp} and \req{recopt} at each iteration by the proposed approach and the conventional periodic MPC (right end).}
  \label{triggerdatapoint}
\end{table*}

Based on the above system configuration, we consider tracking the robot \req{foldyn} to the reference trajectory \req{refdyn} by learning the system of the error posture \req{simdyn} with the GP regression and applying the event-triggered MPC in \ralg{itertask}. In the algorithm, the initial training dataset $\mathcal{D}_{N}$ with the number of the data points $N = 30$ are collected by either applying control inputs randomly from $\mathcal{U}$ or applying a PID controller whose control gains are heuristically chosen to reduce the tracking error. 
The initial prediction horizon $H_0$ and the stage cost function $h_s$ of the OCP \req{ocp} are set to be $H_0 = 30$ and $h_s(\hat{x}_e, u) = \|\hat{x}_e\| + \|u\|$, respectively. The set of the parameters for constructing the symbolic model in \rdef{symbolictransaug} are given by $\mathcal{X} = \{x \in \mathbb{R}^3: |x_i| \le 0.3, \forall i \in \mathbb{N}_{1: 3} \}$, $\eta_x = [0.01, 0.01, 0.01]^\top$, $\eta_u = [0.25, 0.25]^\top$, and $\mathsf{z} = \{3, 3, 3\}$. Furthermore, in order to suppress the number of the training dataset for learning the unknown function \req{simdyn}, we consider adding the data points of the input $z_e = [x_e^\top, u^\top]^\top$ and the output $f(x_e, u) + w$ to the training dataset $\mathcal{D}_N$ only when the variance of the GP model in \req{gpvar} exceeds a certain threshold. Specifically, the input-output data is added to the training dataset only when the condition to guarantee the recursive feasibility is violated, e.g., we set the training dataset as $\widetilde{\mathcal{D}}_i \leftarrow \widetilde{\mathcal{D}}_i \cup \{{z}(t_k + j), x_i(t_k + j + 1)\}$ only when $d_i(x(t_k+j), \hat{x}^*(j|t_k)) > \xi^*_i (j ; x(t_k))$. 

\rfig{trajtrigger4} (a)-(c) show the trajectories of the robot \req{foldyn} obtained by applying \ralg{itertask} for the cases of iteration $1, 5$, and $10$. The figures illustrate the reference trajectories $({\rm x}_r(t), {\rm y}_r(t))$ (red dash-dotted line), tracking trajectories of the robot $({\rm x}(t), {\rm y}(t))$ under the event-triggered MPC (blue solid line) and the symbolic controller (green dashed line), and the positions where the event-triggered conditions \req{trigger} are violated (orange cross mark). At each iteration, the initial state of the robot (blue circle mark) is given randomly from $\mathcal{X}_{\rm init} = [-2, -3]^\top \times [-2, -3]^\top \times [0, \pi]^\top$ (green rectangle with slashes), and the algorithm proceeds to the next iteration after 40 steps have passed (yellow star mark). In correspondence with \rfig{trajtrigger4}, \rfig{trajtrigger5} (a)-(b) show the trajectories of the error distance $\sqrt{{\rm x}_e^2(t) + {\rm y}_e^2(t)}$ and the error attitude $|\theta_e(t)|$ between the robot \req{foldyn} and the leader \req{refdyn} for each iteration. The figures show that although the robot \req{foldyn} can not follow the reference trajectory \req{refdyn} within 40 steps in the initial iteration, the tracking performance is improved and the control objective can be achieved as the number of iterations increases. Furthermore, \rfig{trajtrigger5} (c) provides the trajectories of the control input, in which $\frac{|v(t)|}{v_{\rm max}} + \frac{|\Omega(t)|}{\Omega_{\rm max}}$ is taken as an index to evaluate the control input constraint. 
\rtab{triggerdatapoint} reports the number of triggers where the event-triggered conditions \req{trigger} are violated, the number of data points of the training dataset $\mathcal{D}_N$, and the total computation time of the OCP \req{ocp} and the optimization problem \req{recopt} at each iteration until the number of iterations reaches 10. 
We observe that heavy computational load is required for the initial iteration, i.e., the OCP \req{ocp} is solved 12 times in 40 steps and the total computation time takes 582 ms. However, as the number of data points increases, the OCP \req{ocp} is solved less than twice in 40 steps and the total computation time only takes 74 ms, indicating that the frequency of solving the OCPs is significantly reduced. 
%説明追加予定
In the right end of \rtab{triggerdatapoint}, we also illustrate the result by the periodic MPC, where the OCP is solved every time based on the training data obtained at iteration~10. The table shows that the total computation time is 1073 ms, which is 14 times larger than the proposed approach.  
In summary of the simulation results, we can confirm that the number of solving the OCPs can be reduced by applying the proposed event-triggered strategy (especially as the number of iterations increases) and computational advantage over the periodic case. 
%, %as the number of iterations increases, the computation load can be remarkably reduced by applying the event-triggered strategy proposed in this paper while the tracking performance is improved.

\section{Conclusion}\label{conclusionsec}
In this paper, we proposed a novel learning-based event-triggered MPC framework for nonlinear control systems whose dynamics are unknown apriori. The proposed framework formulated the OCP by a nominal model obtained from GP regression and a terminal set constructed by a symbolic abstraction. Moreover, the event-triggered condition was designed to guarantee recursive feasibility. In order to derive the terminal set and the event-triggered condition, we employed the deterministic error bound with the assumption that the underlying dynamics lie in the RKHS corresponding to the kernels utilized in GP regression. Furthermore, we provided convergence analysis with a concept of decreasing horizon strategy and showed that the finite-time convergence to the terminal set is achieved as the uncertainty of the GP model becomes smaller. Then, we proposed an iterative task algorithm for the learning-based event-triggered MPC to reduce the uncertainty of the GP model and update the control performance. Finally, we confirmed the validity of our proposed approach through a numerical simulation of a tracking control problem.

% \clearpage
%\bibliographystyle{IEEEtran}
% \addcontentsline{toc}{section}{Bibliography}
%\bibliography{refs}

\appendix
\subsection{Proof of \rlem{continuity}}\label{continuityproof}
\begin{proof}
From \ras{rkhsas}, we have
\begin{align}\label{continuitybefore}
    &|f_i(x_1, u) - f_i(x_2, u)| \notag \\
    &\le b_i \cdot \sqrt{\mathsf{k}_i(z_1, z_1) - 2 \mathsf{k}_i(z_1, z_2) + \mathsf{k}_i(z_2, z_2)},
\end{align}
for all $i\in \mathbb{N}_{1: n_x}$, $x_1, x_2 \in \mathbb{R}^{n_x}$, and $u \in \mathcal{U}$, where $z_1 = [x_1^\top, u^\top]^\top, z_2 = [x_2^\top, u^\top]^\top$, see Lemma 4.28 in \cite{hearst1998support}. Moreover, from the definition of the SE kernel in \req{sekernel}, we can compute $\mathsf{k}_i(z_1, z_2)$ as
\begin{align}
    \mathsf{k}_i(z_1, z_2) &=  \alpha_i^2 \exp(-\frac{1}{2} \|z_1 - z_2\|^2_{\Lambda_i^{-1}}) \notag \\
    &= \alpha_i^2 \exp (-\frac{1}{2}\|x_1 - x_2\|^2_{\Lambda_{x, i}^{-1}} - \frac{1}{2}\|u - u\|^2_{\Lambda_{u, i}^{-1}}) \notag \\
    &= \mathsf{k}_{x, i}(x_1, x_2),
\end{align}
for all $i\in \mathbb{N}_{1: n_x}$, where $\Lambda_{u, i} = \diag{\lambda^u_{i, 1}, \ldots, \lambda^u_{i, n_u}}$. 
% This computation can be done for $\mathsf{k}_i(z_1, z_1)$ and $\mathsf{k}_i(z_2, z_2)$ in the same way. 
Therefore, \req{continuitybefore} follows that
\begin{align}
    &|f_i(x_1, u) - f_i(x_2, u)| \notag \\
    &\le b_i \cdot \sqrt{\mathsf{k}_{x, i}(x_1, x_1) - 2 \mathsf{k}_{x, i}(x_1, x_2) + \mathsf{k}_{x, i}(x_2, x_2)} \notag \\
    &= b_i \cdot d_i(x_1, x_2),
\end{align}
for all $i\in \mathbb{N}_{1: n_x}$, where $d_i(\cdot, \cdot)$ is the kernel metric defined in \req{kernelmetric}.
\end{proof}

\subsection{Proof of \rlem{inoutlem}}\label{inoutlemproof}
\begin{proof}
Given $\gamma_i \in [0, \sqrt{2}\alpha_i]$ ($i \in \mathbb{N}_{1: n_x}$), let $\mathcal{X}_1, \mathcal{X}_2$ be the any two sets satisfying \req{inoutlemeq}. Then, from the definition of $\mathsf{Int}_{d_i}(\cdot; \gamma_i)$ ($i \in \mathbb{N}_{1: n_x}$), the relation between $\mathcal{X}_1$ and $\mathcal{X}_2$ is characterized by the ball sets $\mathcal{B}_{d_i}(\cdot; \gamma_i)$ ($i \in \mathbb{N}_{1: n_x}$) as
\begin{align}\label{intdefdeploy}
\mathcal{X}_1 &\subseteq \bigcap_{i=1}^{n_x}\mathsf{Int}_{d_i}(\mathcal{X}_2; \gamma_i) = \bigcap_{i=1}^{n_x}\{{x}_1 \in \mathcal{X}_2: \mathcal{B}_{d_i}({x}_1; \gamma_i) \subseteq \mathcal{X}_2\}.
\end{align}
Hence, from \req{intdefdeploy}, given $x_1 \in \mathcal{X}_1$, we have
\begin{align}\label{balldefdeploy}
    \mathcal{B}_{d_i}(x_1; \gamma_i) \subseteq \mathcal{X}_2,
\end{align}
for all $i \in \mathbb{N}_{1: n_x}$.
Moreover, recall that the ball set of radius $\gamma_i$ centered at $x_1$ is defined as $\mathcal{B}_{d_i}({x}_1; \gamma_i) = \{{x}_2 \in \mathbb{R}^{n_x} : \ d_i({x}_1, {x}_2) \le \gamma_i\}$. 
Hence, if $x_1, x_2 \in \mathbb{R}^{n_x}$ fulfill $d_i(x_1, x_2) \le \gamma_i$, we then have
\begin{align}\label{balldefdeploy2}
    x_2 \in \mathcal{B}_{d_i}({x}_1; \gamma_i),
\end{align}
for all $i \in \mathbb{N}_{1: n_x}$. Therefore, given $x_1 \in \mathcal{X}_1$ and $x_2 \in \mathbb{R}^{n_x}$ satisfying $d_i(x_1, x_2) \le \gamma_i$ for all $i \in \mathbb{N}_{1: n_x}$, it follows from \req{balldefdeploy} and \req{balldefdeploy2} that $x_2 \in \mathcal{B}_{d_i}(x_1; \gamma_i) \subseteq \mathcal{X}_2$. 
\end{proof}

\subsection{Proof of \rlem{recoptfeasiblelem}}\label{recoptfeasiblelemproof}
\begin{proof}
Suppose that the OCP \req{ocp} at time $t_k$ ($k \in \mathbb{N}_{\ge 0}$) are feasible, and we obtain the optimal control sequence $\bfmath{u}^*(t_k)$ and the corresponding (predictive) states $\hat{\bfmath{x}}^*(t_k)$. Then, from \ras{datasetas}, the uncertainties of the optimal predictive states (i.e., ${\hat{\delta}_{N, i}(j| t_k)}$, $i \in \mathbb{N}_{1: n_x}, j \in \mathbb{N}_{0: H_k - 1}$) follows that $\hat{\delta}_{N, i}(j| t_k) \le \sqrt{2}\alpha_i b_i - \varsigma$ for all $i \in \mathbb{N}_{1: n_x}$ and $j \in \mathbb{N}_{0: H_k - 1}$.
Hence, we immediately obtain the result that the second constraints in \req{recopt} is not empty for all $i \in \mathbb{N}_{1: n_x}$ and $j \in \mathbb{N}_{0: H_k - 1}$, and there exist $\psi_i(j | t_k) \in \mathbb{R}_{> 0}$ that satisfies the second constraints in \req{recopt} for all $i \in \mathbb{N}_{1: n_x}$ and $j \in \mathbb{N}_{0: H_k - 1}$. Thus, given $\psi_i(j | t_k) \in \mathbb{R}_{>0}$ that satisfies the second constraints in \req{recopt} for all $i \in \mathbb{N}_{1: n_x}$ and $j \in \mathbb{N}_{0: H_k - 1}$, we can compute the infimum of the right hand side of the first constraints in \req{recopt} as $ \|{b}\odot \psi(j | t_k) \|^2_{\Lambda_{x, i}^{-1}} \ge \|\hat{\delta}_N(j | t_k)\|^2_{\Lambda^{-1}_{x, i}}$ for all $i \in \mathbb{N}_{1: n_x}$ and $j \in \mathbb{N}_{0: H_k - 1}$. Therefore, if we have
\begin{align}\label{mathincluprob}
  \|\hat{\delta}_N(j | t_k)\|^2_{\Lambda^{-1}_{x, i}} \le c_{N, i}^2(j + 1 | t_k), 
\end{align}
for all $i \in \mathbb{N}_{1: n_x}$ and $j \in \mathbb{N}_{0: H_k - 1}$, it is sufficient that the optimization problem \req{recopt} is feasible at every $t_k$ ($k \in \mathbb{N}_{\ge 0}$). Let us now prove that \req{mathincluprob} holds for all $i \in \mathbb{N}_{1: n_x}$ and $j \in \mathbb{N}_{0: H_k - 1}$ by mathematical induction as follows:

First, consider the case $j = H_k - 1$. From the definition of a set of scalars $\gamma_i$ in \ras{terminalsetas}, it follows that $d_i(0_{n_x}, \delta_N(x, u)) \le \gamma_i$ for all $x \in \mathcal{X}$, $u \in \mathcal{U}$, and $i \in \mathbb{N}_{1: n_x}$. Hence, the upper bound of $\|\hat{\delta}_N(H_k - 1 | t_k)\|^2_{\Lambda^{-1}_{x, i}}$ can be computed by
\begin{align}
    \|\hat{\delta}_N(H_k - 1 | t_k)\|^2_{\Lambda^{-1}_{x, i}} \le {2 \log\left(\frac{2 \alpha_i^2}{2 \alpha_i^2 - \gamma_i^2}\right)} = c^2_{N, i}(H_k | t_k)
\end{align}
for all $i \in \mathbb{N}_{1: n_x}$, which means that \req{mathincluprob} holds for the case $j = H_k - 1$. 

Next, consider the case $j = \ell$ with some $\ell \in \mathbb{N}_{1: H_k - 1}$, and suppose that \req{mathincluprob} holds for all $j \in \mathbb{N}_{\ell: H_k - 1}$, i.e., $\|\hat{\delta}_N(\ell | t_k)\|^2_{\Lambda^{-1}_{x, i}} \le c_{N, i}^2(\ell + 1 | t_k)$ for all $i \in \mathbb{N}_{1: n_x}$ and $j \in \mathbb{N}_{\ell: H_k - 1}$. Then, the optimization problem \req{recopt} for the case $j = \ell$ is feasible, and we obtain the solutions $\psi_i^*(\ell | t_k)$ for all $i \in \mathbb{N}_{1: n_x}$, i.e., $\|b \odot \psi^*(\ell | t_k)\|^2_{\Lambda^{-1}_{x, i}} \le c_{N, i}^2(\ell + 1 | t_k)$ for all $i \in \mathbb{N}_{1: n_x}$, where $\psi^* = [\psi_1^*, \ldots, \psi_{n_x}^*]^\top$. Furthermore, using $\psi_i^*(\ell | t_k)$ and the notations of $c_{N, i}$ in \req{cdef2}, the difference between $c^2_{N, i}(\ell | t_k)$ and $c^2_{N, i}(\ell + 1 | t_k)$ can be computed by
\begin{align}\label{cdiff}
    &c^2_{N, i}(\ell | t_k) - c^2_{N, i}(\ell + 1 | t_k)  \notag \\
    &\ge 2 \log\left(\frac{2 \alpha_i^2}{2 \alpha_i^2 - \psi_i^{*2}(\ell |t_k) }\right) - \|b \odot \psi^*(\ell | t_k)\|^2_{\Lambda^{-1}_{x, i}} \notag \\
    & = 2 \log \left\{\frac{2 \alpha_i^2 \exp \left(- \frac{1}{2} \| b \odot \psi^{*2}(\ell |t_k) \|^2_{\Lambda_{x, i}^{-1}}\right)}{2 \alpha_i^2 - \psi_i^{*2}(\ell |t_k) } \right\},
\end{align}
for all $i \in \mathbb{N}_{1: n_x}$. We now analyze the lower bound of \req{cdiff}, and define the following function $h_i: \mathbb{R}^{n_x}_{> 0} \rightarrow \mathbb{R}$:
\begin{align}\label{cdifffuc}
h_i(e) = e_i^2 + 2\alpha_i^2 \exp \left(- \frac{1}{2} \| b \odot {e} \|^2_{\Lambda_{x, i}^{-1}}\right) - 2\alpha_i^2,
\end{align}
for all $i \in \mathbb{N}_{1: n_x}$ with $e = [e_1, \ldots, e_{n_x}]^\top \in \mathbb{R}^{n_x}_{>0}$. The partial derivatives of $h_i$ are then given by
\begin{align}
    \frac{\partial h_i(e)}{\partial e_p} =& - \frac{2 \alpha_i^2 \cdot b_p^2}{\lambda_{i, p}} \cdot e_p \cdot \exp \left(- \frac{1}{2} \| b \odot {e} \|^2_{\Lambda_{x, i}^{-1}}\right) \notag \\
    &+ \left\{
    \begin{array}{cc}
    2 e_p \  & \ (p = i) \\
    0 & (p \neq i)
    \end{array}
    \right.,
\end{align}
for all $i, p \in \mathbb{N}_{1: n_x}$. Moreover, the second-order partial derivatives of $h_i$ are obtained by
\begin{align}
    \frac{\partial^2 h_i(e)}{\partial e_p\partial e_q} =& \frac{2 \alpha_i^2 b_p^2 b_q^2}{\lambda_{i, p}\lambda_{i, q}} \cdot \exp \left(- \frac{1}{2} \| b \odot {e} \|^2_{\Lambda_{x, i}^{-1}}\right) \cdot e_p \cdot e_q \notag \\
    &+
    \left\{
    \begin{array}{cc}
    2(1 - \frac{\alpha_i^2 \cdot b_p^2}{\lambda_{i, p}} \cdot \exp (- \frac{1}{2} \| b \odot {e} \|^2_{\Lambda_{x, i}^{-1}}) )  \ & (q = p) \\
    0 \ &  (q \neq p)
    \end{array}
    \right.
\end{align}
for all $i, p, q \in \mathbb{N}_{1: n_x}$. Furthermore, from \ras{datasetas}, it follows that $ \alpha_i b_p \le \sqrt{\lambda_{i, p}}$ for all $i, p \in \mathbb{N}_{1: n_x}$ and we have $\frac{\partial^2 h_i(e)}{\partial e_p\partial e_q} > 0$ for all $e \in \mathbb{R}_{>0}^{n_x}$ and $i, p, q \in \mathbb{N}_{1: n_x}$. Hence, $h_i(e)$, $e \in \mathbb{R}_{>0}^{n_x}$ ($i \in \mathbb{N}_{1: n_x}$) is convex. Moreover, we have $\lim_{e \rightarrow +0_{n_x}} h_i(e) = 0$ for all $i \in \mathbb{N}_{1: n_x}$. Thus, for all $e \in \mathbb{R}_{> 0}$ and $i \in \mathbb{N}_{1: n_x}$, it follows that $h_i(e) > 0$. Therefore, using $h_i(\psi^*(\ell | t_k)) > 0$ for any positive solutions $\psi_i^*(\ell | t_k)$ ($i \in \mathbb{N}_{1: n_x}$), we have
\begin{align}\label{cdifflog}
\frac{2 \alpha_i^2 \exp \left(- \frac{1}{2} \| b \odot \psi^{*2}(\ell |t_k) \|^2_{\Lambda_{x, i}^{-1}}\right)}{2 \alpha_i^2 - \psi_i^{*2}(\ell |t_k) } > 1,
\end{align}
for all $i \in \mathbb{N}_{1: n_x}$. From \req{cdiff} and \req{cdifflog}, we have $c_{N, i}(\ell + 1 | t_k) \le c_{N, i}(\ell | t_k)$ hold for all $i \in \mathbb{N}_{1: n_x}$. Hence, for $j = \ell - 1$, we have
\begin{align}
     \|\hat{\delta}_{N}(\ell - 1| t_k)\|^2_{\Lambda_{x, i}^{-1}} &\le c_{N, i}(H_k | t_k) \le \ldots c_{N, i}(\ell + 1 | t_k) \notag \\
     &\le c_{N, i}((\ell - 1) + 1| t_k),
\end{align}
for all $i \in \mathbb{N}_{1: n_x}$. That is, \req{mathincluprob} holds for the case $j = \ell - 1$. Thus, it follows from mathematical induction that \req{mathincluprob} holds for all $i \in \mathbb{N}_{1: n_x}$ and $j \in \mathbb{N}_{0: H_k - 1}$. Hence, the optimization problem \req{recopt} is feasible at every $t_k$ ($k \in \mathbb{N}_{\ge 0}$). 
\end{proof}

\subsection{Proof of Theorem 1}\label{feasiblethmproof}
In order to prove \rlem{feasiblethm}, we need to resort the following lemma, in which upper bounds of kernel metrics between two nominal states and between an actual state and the nominal one are derived:

\begin{lem}\label{metricbound}
\normalfont
Given the training dataset $\mathcal{D}_N = \{\mathcal{D}_{N, i}\}^{n_x} _{i=1}$, and let \ras{rkhsas} hold. 
Then, the kernel metrics between two nominal states are given by
\begin{align}
& d_i(\hat{{f}}({x}_1, {u}; \mathcal{D}_{N}), \hat{{f}}({x}_2, {u}; \mathcal{D}_{N})) \le \hat{\zeta}_{N, i}({x}_1, {x}_2),
\end{align}
for all $x_1, x_2 \in \mathbb{R}^{n_x}$, $u \in \mathbb{R}^{n_u}$, and $i \in \mathbb{N}_{1: n_x}$, where
\begin{align}\label{zetahatdef}
&\hat{\zeta}_{N, i}({x}_1, {x}_2) = \sqrt{2\alpha_i^2\left\{1 - \exp \left(- \frac{1}{2} \| {b}\odot {d}({x}_1, {x}_2) \|^2_{\Lambda_{x, i}^{-1}}\right) \right\}}
\end{align}
with $d = [d_1, \ldots, d_{n_x}]^\top$. Moreover, the kernel metrics between an actual state and the nominal one are given by
\begin{align}
& d_i({f}({x}_1, {u}) + {w}, \hat{{f}}({x}_2, {u}; \mathcal{D}_{N})) \le \zeta_{N, i}({x}_1, {x}_2, {u}),
\end{align}
for all $x_1, x_2 \in \mathbb{R}^{n_x}$, $u \in \mathbb{R}^{n_u}$, and $i \in \mathbb{N}_{1: n_x}$, where 
\begin{align}\label{zetadef}
&\zeta_{N, i}({x}_1, {x}_2, {u}) \notag \\
&= \sqrt{2\alpha_i^2\left\{1 - \exp \left(- \frac{1}{2} \| {b}\odot {d}({x}_1, {x}_2) + \delta_N(x_2, u) \|^2_{\Lambda_{x, i}^{-1}}   \right) \right\}}
\end{align}
with $d = [d_1, \ldots, d_{n_x}]^\top$ and $\delta_N = [\delta_{N, 1}, \ldots, \delta_{N, n_x}]^\top$.
\qedwhite
\end{lem}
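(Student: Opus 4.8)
The plan is to reduce both inequalities to a single coordinatewise estimate combined with the monotonicity of a scalar profile function. Since $\mathsf{k}_{x,i}(x,x)=\alpha_i^2$ for every $x\in\mathbb{R}^{n_x}$, the kernel metric \req{kernelmetric}--\req{kx} can be written as $d_i(a,b)=g_i\bigl(\|a-b\|_{\Lambda_{x,i}^{-1}}\bigr)$, where $g_i(r)=\sqrt{2\alpha_i^2\{1-\exp(-r^2/2)\}}$ is continuous and strictly increasing on $\mathbb{R}_{\ge 0}$; hence $d_i(a,b)\le g_i(\rho)$ whenever $\rho\ge\|a-b\|_{\Lambda_{x,i}^{-1}}$. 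Moreover $\hat{\zeta}_{N,i}(x_1,x_2)$ in \req{zetahatdef} is exactly $g_i$ evaluated at $\|b\odot d(x_1,x_2)\|_{\Lambda_{x,i}^{-1}}$, and $\zeta_{N,i}(x_1,x_2,u)$ in \req{zetadef} is $g_i$ evaluated at $\|b\odot d(x_1,x_2)+\delta_N(x_2,u)\|_{\Lambda_{x,i}^{-1}}$. Because the weight $\Lambda_{x,i}^{-1}$ is diagonal with nonnegative entries, it is enough to bound, coordinate by coordinate $j$, the $j$-th component of the relevant output difference by $b_j d_j(x_1,x_2)$ (respectively $b_j d_j(x_1,x_2)+\delta_{N,j}(x_2,u)$); squaring, scaling by $\lambda_{i,j}^{-1}$ and summing, then applying $g_i$ and its monotonicity, delivers the claimed bound.

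For the actual-vs-nominal estimate I would, for each $j$, use the triangle inequality for $|\cdot|$ through the point $f_j(x_2,u)$:
\begin{align*}
|f_j(x_1,u)+w_j-\hat{f}_j(x_2,u)| &\le |f_j(x_1,u)-f_j(x_2,u)| \\
&\quad + |f_j(x_2,u)+w_j-\mu_{f_j}(x_2,u;\mathcal{D}_{N,j})|.
\end{align*}
The first term is at most $b_j\,d_j(x_1,x_2)$ by \rlem{continuity}. For the second, \rlem{errorbound} gives $|f_j(x_2,u)-\mu_{f_j}(x_2,u;\mathcal{D}_{N,j})|\le\beta_{N,j}\,\sigma_{f_j}(x_2,u;\mathcal{D}_{N,j})$ and $|w_j|\le\sigma_{w_j}$ since $w\in\mathcal{W}$, so it is at most $\beta_{N,j}\sigma_{f_j}(x_2,u;\mathcal{D}_{N,j})+\sigma_{w_j}=\delta_{N,j}(x_2,u)$ by \req{ndeltadef}. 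Combining as above and using the monotonicity of $g_i$ yields $d_i(f(x_1,u)+w,\hat{f}(x_2,u;\mathcal{D}_N))\le\zeta_{N,i}(x_1,x_2,u)$.

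For the nominal-vs-nominal estimate the template is identical, but the coordinate bound now required is a Lipschitz estimate for the posterior mean itself, $|\hat{f}_j(x_1,u)-\hat{f}_j(x_2,u)|\le b_j\,d_j(x_1,x_2)$. I would obtain this from the same reproducing-kernel inequality used in the proof of \rlem{continuity} (Lemma~4.28 in \cite{hearst1998support}, valid for every element of $\mathcal{H}_{\mathsf{k}_j}$), applied now to $\hat{f}_j(\cdot,u)=\mu_{f_j}(\cdot,u;\mathcal{D}_{N,j})\in\mathcal{H}_{\mathsf{k}_j}$, provided one also has the bound $\|\mu_{f_j}(\cdot,u;\mathcal{D}_{N,j})\|_{\mathsf{k}_j}\le b_j$ on the RKHS norm of the posterior mean. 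Granting this, the coordinatewise bound, the diagonal-weighted sum of squares, and the monotonicity of $g_i$ give $d_i(\hat{f}(x_1,u;\mathcal{D}_N),\hat{f}(x_2,u;\mathcal{D}_N))\le\hat{\zeta}_{N,i}(x_1,x_2)$.

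I expect the main obstacle to be precisely that last ingredient: the RKHS-norm (equivalently $b_j$-Lipschitz) bound on the posterior mean. For the true $f_j$ this is \ras{rkhsas}, but $\mu_{f_j}$ is a data-dependent kernel expansion $\sum_t c_t\mathsf{k}_j(\cdot,z_t)$ with $c=(K_j+\sigma_{w_j}^2 I)^{-1}Y_{N,j}$, whose norm a priori only satisfies $\|\mu_{f_j}\|_{\mathsf{k}_j}^2=Y_{N,j}^\top(K_j+\sigma_{w_j}^2 I)^{-1}K_j(K_j+\sigma_{w_j}^2 I)^{-1}Y_{N,j}\le Y_{N,j}^\top(K_j+\sigma_{w_j}^2 I)^{-1}Y_{N,j}$, and it has to be tied to $b_j$ through the way $b_j$ is selected (or an additional smallness condition on the dataset, in the spirit of \ras{datasetas}). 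Once that is in place, the remainder is only the triangle inequality, a diagonal-weighted sum of squares, and the monotonicity of $g_i$, so I do not anticipate further difficulty.
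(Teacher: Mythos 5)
Your proposal follows essentially the same route as the paper's proof in \rapp{feasiblethmproof}: both bounds are obtained by writing $d_i(a,b)$ as the increasing profile $g_i(\|a-b\|_{\Lambda_{x,i}^{-1}})$, bounding the output difference coordinatewise, and invoking monotonicity; and for the actual-vs-nominal case the paper uses exactly your triangle inequality through $f_j(x_2,u)$ together with \rlem{errorbound}, \rlem{continuity}, and the noise bound $|w_j|\le\sigma_{w_j}$. The one place you diverge is the "main obstacle" you flag, and you are right to flag it: the paper disposes of the nominal-vs-nominal coordinate bound $|\hat f_j(x_1,u;\mathcal{D}_{N,j})-\hat f_j(x_2,u;\mathcal{D}_{N,j})|\le b_j\,d_j(x_1,x_2)$ simply by citing \rlem{continuity}, but that lemma is stated (and proved via \ras{rkhsas}) only for the true $f_j$, not for the posterior mean $\mu_{f_j}$. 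Applying the same RKHS Lipschitz argument to $\mu_{f_j}$ requires $\|\mu_{f_j}(\cdot;\mathcal{D}_{N,j})\|_{\mathsf{k}_j}\le b_j$, which, as you note, follows from $Y_{N,j}^\top(K_j+\sigma_{w_j}^2I)^{-1}Y_{N,j}\le b_j^2$ but is not literally guaranteed by the stated assumptions (well-definedness of $\beta_{N,j}$ only gives the weaker bound $b_j^2+N$). So your proof is correct modulo the same implicit step the paper itself glosses over; making that norm bound explicit (or adding it as a mild condition on the dataset) would actually tighten the paper's argument rather than depart from it.
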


(\textit{proof} of \rlem{metricbound}):
Given the training dataset $\mathcal{D}_{N} = \{\mathcal{D}_{N, i}\}_{i = 1}^{n_x}$, from the definition of \req{kernelmetric}, the kernel metric between the two nominal states $\hat{f}({x}_1, {u}; \mathcal{D}_{N})$ and $\hat{f}({x}_2, {u}; \mathcal{D}_{N})$ can be computed by
\begin{align}\label{kernelmetricdeploynominal}
&d_i(\hat{{f}}({x}_1, {u}; \mathcal{D}_{N, i}), \hat{{f}}({x}_2, {u}; \mathcal{D}_{N})) \notag \\
&= \sqrt{2\alpha_i^2 - 2 \mathsf{k}_{x, i}(\hat{{f}}({x}_1, {u}; \mathcal{D}_{N, i}), \hat{{f}}({x}_2, {u}; \mathcal{D}_{N}))} \notag \\
&= \sqrt{2\alpha_i^2\{1 - \exp (- \frac{1}{2}\|\hat{{f}}({x}_1, {u}; \mathcal{D}_{N, i}) - \hat{{f}}({x}_2, {u}; \mathcal{D}_{N})\|^2_{\Lambda_{x, i}^{-1}}) \}},
\end{align}
for all $i \in \mathbb{N}_{1: n_x}$. Moreover, from \rlem{continuity}, we then have
\begin{align}\label{appcontiunitynominal}
|\hat{f}_i({x}_1, {u}; \mathcal{D}_{N, i}) - \hat{f}_i({x}_2, {u}; \mathcal{D}_{N, i}) | \le b_i \cdot d_i({x}_1, {x}_2)
\end{align}
for all $i \in \mathbb{N}_{1: n_x}$. Hence, it follows from \req{appcontiunitynominal} and \req{kernelmetricdeploynominal} that
\begin{align}
&d_i(\hat{f}({x}_1, {u}; \mathcal{D}_N), \hat{{f}}({x}_2, {u}; \mathcal{D}_{N})) \notag \\
&\le \sqrt{2\alpha_i^2\left\{1 - \exp \left(- \frac{1}{2} \sum_{j = 1}^{n_x}\frac{1}{\lambda^x_{i, j}} |b_i \cdot d_i({x}_1, {x}_2)|^2  \right) \right\}} \notag \\
&= \hat{\zeta}_{N, i}({x}_1, {x}_2, {u}),
\end{align}
for all $i \in \mathbb{N}_{1: n_x}$. In the similar way as the above, the upper bound of the kernel metric between the actual state ${f}({x}_1, {u}) + {w}$ and the nominal one $\hat{{f}}({x}_2, {u}; \mathcal{D}_{N})$ can be computed by
\begin{align}\label{kernelmetricdeploy}
&d_i({f}({x}_1, {u}) + {w}, \hat{{f}}({x}_2, {u}; \mathcal{D}_{N})) \notag \\
&= \sqrt{2\alpha_i^2 - 2 \mathsf{k}_{x, i}({f}({x}_1, {u}) + {w}, \hat{{f}}({x}_2, {u}; \mathcal{D}_N))} \notag \\
&= \sqrt{2\alpha_i^2\left\{1 - \exp \left(- \frac{1}{2}\|{f}({x}_1, {u}) + {w} - \hat{{f}}({x}_2, {u}; \mathcal{D}_{N})\|^2_{\Lambda_{x, i}^{-1}} \right) \right\}}
\end{align}
for all $i \in \mathbb{N}_{1: n_x}$. Moreover, from \rlem{errorbound} and \rlem{continuity}, we then have
\begin{align}\label{appcontiunity}
&|f_i({x}_1, {u}) + w_i - \hat{f}_i({x}_2, {u}; \mathcal{D}_{N, i})| \notag \\
&\le |f_i({x}_1, {u}) - f_i({x}_2, {u})| + |f_i({x}_2, {u}) - \hat{f}_i({x}_2, {u}; \mathcal{D}_{N, i})| + \sigma_{w_i} \notag \\
&\le b_i \cdot d_i({x}_1, {x}_2) + \beta_{N, i} \cdot \sigma_{f_i}(x_2, u; \mathcal{D}_{N, i}) + \sigma_{w_i}, \notag \\
&= b_i \cdot d_i({x}_1, {x}_2) + \delta_{N, i}(x_2, u),
\end{align}
for all $i \in \mathbb{N}_{1: n_x}$. Hence, from \req{kernelmetricdeploy} and \req{appcontiunity}, it follows that
\begin{align}
&d_i({f}({x}_1, {u}) + {w}, \hat{{f}}({x}_2, {u}; \mathcal{D}_{N})) \le \zeta_{N, i}({x}_1, {x}_2, {u}),
\end{align}
for all $i \in \mathbb{N}_{1: n_x}$.
\qedwhite\\

Based on \rlem{metricbound}, let us now prove \rthm{feasiblethm} as follows:

\begin{proof}
Suppose that the OCP at $t_k$ is feasible, and we have the optimal control sequence $\bfmath{u}^*(t_k)$, the corresponding (predictive) states $\hat{\bfmath{x}}^*(t_k)$. Then, from \rlem{recoptfeasiblelem}, the optimization problem \req{recopt} is feasible and the set of parameters $\{\xi_i^*(j ; x(t_k))\}_{i = 1}^{n_x}$ of the event-triggered conditions \req{trigger} are obtained. Moreover, suppose that the next OCP time $t_{k + 1}$ is determined according to \req{trigger} and let the inter-event time given by $m_k = t_{k + 1} - t_k$. In addition, let $x(t_k), \ldots, x(t_k + m_k)$ be the actual state sequence of the system \req{realdynamics} by applying ${u}^*(0| t_k), \ldots, {u}^* (t_{k} + m_k - 1| t_k)$. 
Then, for every choice of $m_k \in \mathbb{N}_{1: H_k}$, it follows that 
\begin{align*}
d_i({x}(t_k + m_k - 1), \hat{{x}}^*(m_k - 1 | t_k)) \le \xi_i^* (m_k - 1 ; x(t_k)), 
\end{align*}
for all $i \in \mathbb{N}_{1: n_x}$, which is due that, for each $m_k \in \mathbb{N}_{1: H_k}$, the event-triggered conditions \req{trigger} are satisfied until $t_{k} + m_k - 1$, i.e., $d_i({x}(t_k + j), \hat{{x}}^*(j | t_k)) \leq \xi_i^* (j ; x(t_k))$ for all $i \in \mathbb{N}_{1: n_x}$ and $j \in \mathbb{N}_{0: m_k - 1}$. Thus, from \rlem{metricbound}, the kernel metric between the actual state $x(t_{k + 1})$ and the nominal one $\hat{{x}}^*(m_k| t_k)$ is given by 
\begin{align}\label{kernelmetricboundcompute}
&d_i({x}(t_{k + 1}), \hat{{x}}^*(m_k| t_k)) = d_i({x}(t_k + m_k), \hat{{x}}^*(m_k| t_k)) \notag \\
&\le \zeta_{N, i}({x}(t_k + m_k - 1), \hat{{x}}^*(m_k - 1 | t_k), \hat{{u}}^*(m_k - 1 | t_k)), \notag \\
% &\le \sqrt{2\alpha_i^2 \left\{1 - \exp \left(-\frac{1}{2} \|{b}\odot \xi^*(m_k - 1 | t_k) + \hat{\delta}_N(m_k - 1 | t_k)\|^2_{\Lambda_{x, i}^{-1}}\right) \right\}}
\end{align}
for all $i \in \mathbb{N}_{1: n_x}$.
Moreover, since $\xi_i^* (m_k - 1 ; x(t_k))$, $i \in \mathbb{N}_{1: n_x}$ are defined in \req{xidef} and $\psi_i^* (m_k - 1 | t_k)$, $i \in \mathbb{N}_{1:n_x}$ are the optimal solution of \req{recopt}, we then have
\begin{align}\label{rhobound}
&\| {b}\odot \xi^*(m_k - 1 ; x(t_k)) + \hat{\delta}_N(m_k - 1 | t_k) \|^2_{\Lambda_{x, i}^{-1}} \notag \\
&= \| {b}\odot \psi^*(m_k - 1 | t_k) \|^2_{\Lambda_{x, i}^{-1}} \le c^2_{N, i}(m_k | t_k),
\end{align}
for all $i \in \mathbb{N}_{1: n_x}$. Using \req{kernelmetricboundcompute}, \req{rhobound}, the notation of $c_{N, i}$ in \req{cdef1} and \req{cdef2}, the upper bound of $d_i({x}(t_{k + 1}), \hat{{x}}^*(m_k| t_k))$ is obtained as 
\begin{align}\label{metricupperthmdeploy}
&d_i({x}(t_{k + 1}), \hat{{x}}^*(m_k| t_k)) \notag \\
&\le \sqrt{2\alpha_i^2\left\{1 - \exp \left(- \frac{1}{2} c_{N, i}^2(m_k | t_k)\right) \right\}} \notag \\
&\le \begin{cases}
\gamma_i, & \ \mathrm{if}\ m_k = H_k\\
\psi_i^*(m_k| t_k), & \ \mathrm{if}\ m_k \in \mathbb{N}_{1: H_k - 1}
\end{cases},
\end{align}
for all $i \in \mathbb{N}_{1: n_x}$. Using \req{metricupperthmdeploy}, let us now prove the feasibility of the OCP at $t_{k + 1}$ holds. In particular, it is shown that there exists a feasible control sequence for the OCP at time $t_{k + 1}$ that satisfies the terminal constraint, i.e., the state $x(t_{k+1})$ is driven into $\mathcal{X}_f$ within the prediction horizon $H_{k + 1} = H_k -m_k + 1$. 

First, consider the case $m_k = H_k$. From \req{metricupperthmdeploy}, we then obtain $d_i({x}(t_{k + 1}), \hat{{x}}^*(H_k| t_k)) \leq \gamma_i$ for all $i \in \mathbb{N}_{1: n_x}$. 
In addition, since the OCP is feasible at $t_k$, it fulfills the terminal constraint, i.e, $\hat{x}^*(H_k | t_k) \in \mathcal{X}_f$. Hence, if $m_k = H_k$,  it follows from \req{metricupperthmdeploy}, \rlem{inoutlem}, \ras{terminalsetas}, and $\hat{x}^*(H_k | t_k) \in \mathcal{X}_f$ that $x(t_{k + 1}) \in \mathcal{X}_S$. Thus, from \ras{terminalsetas}, there exists $u = C(x(t_{k + 1}))$, such that $\hat{x}(t_{k + 1} + 1) = \hat{f}(x(t_{k + 1}), u) \in \mathcal{F}(x(t_{k + 1}), u; \mathcal{D}_N) \oplus \mathcal{W} \subseteq \mathcal{X}_f$, satisfying the terminal constraint. Therefore, if $m_k = H_k$, the OCP at $t_{k + 1}$ is feasible with the prediction horizon $H_{k+1} = H_k - m_k + 1 = 1$.

Next, consider the case $m_k \in \mathbb{N}_{1: H_k - 1}$. From \req{metricupperthmdeploy}, it follows that $d_i({x}(t_{k + 1}), \hat{{x}}^*(m_k| t_k)) \leq \psi_i^*(m_k | t_k)$ for all $i \in \mathbb{N}_{1: n_x}$. %For the actual ${x}(t_{k + 1})$ satisfying $d_i({x}(t_{k + 1}), \hat{{x}}^*(m_k| t_k)) \leq \psi_i^*(m_k | t_k)$ for all $i \in \mathbb{N}_{1: n_x}$, 
Consider a candidate feasible control sequence for the OCP at $t_{k+1}$ by $u^*(m_k| t_k), \ldots ,u^*(H_k - 1| t_k)$, and let $\hat{x}(j | t_{k + 1})$, $j \in \mathbb{N}_{0: H_k - m_k - 1}$ be the corresponding predictive states from $x(t_{k+1})$, i.e., $\hat{x}(j + 1 | t_{k + 1}) = \hat{f}(\hat{x}(j | t_{k + 1}), u^*(m_k + j|t_k))$ for all $j \in \mathbb{N}_{0: H_k - m_k - 1}$ with $\hat{x}(0 | t_{k + 1}) = x(t_{k+1})$. Then, applying the first control element $u^*(m_k| t_k)$, the kernel metric between the two nominal state $\hat{x}(1 |t_{k + 1})$ and $\hat{x}^*(m_k + 1|t_k)$ can be computed as
\begin{align}\label{kernelmetricnominalboundcompute}
&d_i(\hat{x}(1 |t_{k + 1}), \hat{x}^*(m_k + 1|t_k)) \notag \\
&\le \hat{\zeta}_{N, i}(\hat{x}(0 |t_{k + 1}), \hat{x}^*(m_k |t_k)) \notag \\
&\le \sqrt{2\alpha_i^2\left\{1 - \exp \left(- \frac{1}{2} \| {b}\odot \psi^*(m_k | t_k) \|^2_{\Lambda_{x, i}^{-1}}\right) \right\}},
\end{align}
for all $i \in \mathbb{N}_{1: n_x}$, where we used \rlem{metricbound} and $d_i({x}(t_{k + 1})$,
$\hat{{x}}^*(m_k| t_k)) \leq \psi_i^*(m_k | t_k)$ for all $i \in \mathbb{N}_{1: n_x}$. 
Using \req{kernelmetricnominalboundcompute}, the notation of $c_{N, i}$ in \req{cdef1} and \req{cdef2}, and $\| {b}\odot \psi^*(m_k | t_k) \|^2_{\Lambda_{x, i}^{-1}}  \le c^2_{N, i}(m_k + 1 | t_k)$, the upper bound of $d_i(\hat{x}(1 |t_{k + 1}), \hat{x}^*(m_k + 1|t_k)) $ is given by
\begin{align}\label{metricupperthmdeploy2}
&d_i(\hat{x}(1 |t_{k + 1}), \hat{x}^*(m_k + 1|t_k)) \notag \\
&\le \sqrt{2\alpha_i^2\left\{1 - \exp \left(- \frac{1}{2} c_{N, i}^2(m_k + 1 | t_k)\right) \right\}} \notag \\ 
&\le \begin{cases}
\gamma_i, & \ \mathrm{if}\ m_k = H_k - 1\\
\psi_i^*(m_k + 1| t_k), & \ \mathrm{if}\  m_k \in \mathbb{N}_{1: H_k - 2}
\end{cases},
\end{align}
for all $i \in \mathbb{N}_{1: n_x}$. Hence, if $m_k = H_k - 1$, it follows from \req{metricupperthmdeploy2}, \rlem{inoutlem}, \ras{terminalsetas}, and $\hat{x}^*(H_k | t_k) \in \mathcal{X}_f$ that $\hat{x}(1 |t_{k + 1}) \in \mathcal{X}_S$. 
Thus, from \ras{terminalsetas}, there exists $u = C(\hat{x}(1 |t_{k + 1}))$, such that $\hat{x}(2 |t_{k + 1}) = \hat{f}(\hat{x}(1 |t_{k + 1}), u) \in \mathcal{X}_f$, satisfying the terminal constraint. Therefore, if $m_k = H_k - 1$, the OCP at $t_{k + 1}$ is feasible with the prediction horizon $H_{k+1} = H_k - m_k + 1 = 2$. If $m_k \in \mathbb{N}_{1: H_k - 2}$, it follows that $d_i(\hat{x}(1 |t_{k + 1}), \hat{x}^*(m_k + 1|t_k)) \le \psi_i^*(m_k + 1| t_k)$ for all $i \in \mathbb{N}_{1: n_x}$. 
Hence, we can repeat the same procedure as above and the upper bound of the kernel metric between the two nominal state $\hat{x}(H_k - m_k| t_{k + 1})$ and $\hat{x}^*(H_k | t_{k})$ is given by
\begin{align}\label{metricupperthmdeploy3}
d_i(\hat{x}(H_k - m_k| t_{k + 1}), \hat{x}^*(H_k | t_k)) \le \gamma_i,
\end{align}
for all $i \in \mathbb{N}_{1: n_x}$, and therefore, $\hat{x}(H_k - m_k |t_{k + 1}) \in \mathcal{X}_S$. 
Thus, from \ras{terminalsetas}, there exists $u = C(\hat{x}(H_k - m_k |t_{k + 1}))$, such that $\hat{x}(H_k - m_k + 1 |t_{k + 1}) = \hat{f}(\hat{x}(H_k - m_k |t_{k + 1}), u) \in \mathcal{X}_f$, satisfying the terminal constraint. In other words, the OCP at $t_{k + 1}$ is shown to be feasible with the prediction horizon $H_{k+1} = H_k - m_k + 1$. \\
\end{proof}

\subsection{Proof of \rlem{horizonlem}}\label{horizonlemproof}
\begin{proof}
Suppose that the OCP and \req{recopt} are both feasible at $t_k$, and we obtain the optimal control sequence $\bfmath{u}^*(t_k)$, the corresponding (predictive) states $\hat{\bfmath{x}}^*(t_k)$, and ${\xi^*(j ; x(t_k))}_{i = 1}^{n_x}$. Moreover, suppose that the next OCP time $t_{k + 1}$ is determined according to \req{trigger} and let the prediction horizon updated by $H_{k + 1} = H_{k} - m_k + 1 = 1$. Then, from $m_k = t_{k + 1} - t_k$, we have $t_{k + 1} = t_k + H_k$. 
Thus, from \rlem{metricbound}, the kernel metric between the actual state $x(t_{k + 1})$ and the nominal one $\hat{{x}}^*(H_{k}|t_{k})$ is given by 
\begin{align}\label{kernelmetricboundcomputethm}
&d_i({x}(t_{k + 1}), \hat{{x}}^*(H_{k}| t_{k})) = d_i({x}(t_{k} + H_{k}), \hat{{x}}^*(H_{k}| t_{k})) \notag \\
&\le \zeta_{N, i}({x}(t_k + H_k - 1), \hat{{x}}^*(H_k - 1| t_k), \hat{{u}}^*(H_k - 1 | t_k)),
% &\le \sqrt{2\alpha_i^2 \left\{1 - \exp \left(-\frac{1}{2} \|{b}\odot \xi^*(H_k - 1 | t_k) + \hat{\delta}_N(H_k - 1 | t_k)\|^2_{\Lambda_{x, i}^{-1}}\right) \right\}}
\end{align}
for all $i \in \mathbb{N}_{1: n_x}$.
Moreover, since $\xi_i^* (H_k - 1 ; x(t_k))$, $i \in \mathbb{N}_{1: n_x}$ are defined in \req{xidef} and $\psi_i^* (H_k - 1 | t_k)$, $i \in \mathbb{N}_{1:n_x}$ are the optimal solution of \req{recopt}, we then have
\begin{align}\label{rhoboundthm}
&\| {b}\odot \xi^*(H_{k} - 1 ; x(t_k)) + \hat{\delta}_N(m_{k} - 1 | t_{k}) \|^2_{\Lambda_{x, i}^{-1}} \notag \\
&= \| {b}\odot \psi^*(m_k - 1 | t_k) \|^2_{\Lambda_{x, i}^{-1}} \le c^2_{N, i}(H_{k} | t_{k}),
\end{align}
for all $i \in \mathbb{N}_{1: n_x}$. Using \req{kernelmetricboundcomputethm}, \req{rhoboundthm} and the notation of $c_{N, i}$ defined in \req{cdef1}, the upper bound of $d_i({x}(t_{k + 1}), \hat{{x}}^*(H_{k}| t_k))$ is obtained as 
\begin{align}\label{metricupperthmdeploythm}
&d_i({x}(t_{k + 1}), \hat{{x}}^*(H_{k}| t_{k})) \notag \\
&\le \sqrt{2\alpha_i^2\left\{1 - \exp \left(- \frac{1}{2} c_{N, i}^2(H_{k} | t_{k})\right) \right\}} \le \gamma_i,
\end{align}
for all $i \in \mathbb{N}_{1: n_x}$. In addition, since the OCP is feasible at $t_{k}$, it fulfills the terminal constraint, i.e., $\hat{x}^*(H_{k} | t_{k}) \in \mathcal{X}_f$.  Hence, it follows from \req{metricupperthmdeploythm}, \rlem{inoutlem}, \ras{terminalsetas}, and $\hat{x}^*(H_{k} | t_{k}) \in \mathcal{X}_f$ that $x(t_{k + 1}) \in \mathcal{X}_S$. 
\end{proof}

% \subsection{Proof of \rprop{varepsilonasrprop}}\label{varepsilonasrpropproof}
\subsection{Proof of \rprop{varepsilonasrprop}}\label{varepsilonasrpropproof}
\begin{proof}
The proof is provided by showing that the relation $R(\varepsilon)$ satisfies all the conditions (C.1) - (C.3) of the $\varepsilon$-ASR from $\Sigma_\mathsf{q}$ to $\Sigma$ in \rdef{asrdef}. We start by proving the condition (C. 1) in \rdef{asrdef} holds. Consider the relation $R(\varepsilon)$ given by \req{epsilonparam}. If $x \in \mathbb{R}^{n_x}$ then, by the definition of $\mathcal{X}_\mathsf{q} = [\mathbb{R}^{n_x}]_{\eta_x}$, there exists $x_\mathsf{q} \in \mathcal{X}_\mathsf{q}$ such that $|x_{\mathsf{q}, i} - x_i| \le \eta_{x, i}$ for all $i \in \mathbb{N}_{1: n_x}$. It follows that
\begin{align}
&d_i({x}_\mathsf{q}, {x}) = \sqrt{2\alpha_i^2\left\{1 - \exp\left(-\frac{1}{2}\|{x}_\mathsf{q} - {x}\|^2_{\Lambda_{x, i}^{-1}}\right) \right\}} \notag \\
&\le \sqrt{2\alpha_i^2\left\{1 - \exp\left(-\frac{1}{2}\|{ \eta}_x\|^2_{\Lambda_{x, i}^{-1}} \right) \right\}} \le \epsilon_i,
\end{align}
for all $i \in \mathbb{N}_{1: n_x}$, and hence the condition (C. 1) is satisfied.
Moreover, the condition (C. 2) is satisfied from the definition of $R(\varepsilon)$ in \req{varepsilonasr}. 
Regarding the condition (C. 3), let $({x}_\mathsf{q}, {x}) \in R(\varepsilon)$ and $u = {u}_\mathsf{q} \in \mathcal{U}_\mathsf{q} \subset \mathcal{U}$ and consider the transition ${x}^+ \in G({x}, {u})$ in $\Sigma$. It follows from the definition of $\mathcal{X}_\mathsf{q} = [\mathbb{R}^{n_x}]_{\eta_x}$ that there exists $x_\mathsf{q}^+ \in \mathcal{X}_\mathsf{q}$ such that $|x_{\mathsf{q}, i}^+ - x_i^+| \le \eta_{x, i}$ for all $i \in \mathbb{N}_{1: n_x}$.
Now, let us show that ${x}_\mathsf{q}^+ \in G_\mathsf{q}({x}_\mathsf{q}, {u}_\mathsf{q})$. From \rlem{errorbound} and \rlem{continuity}, we obtain
\begin{align}
&|x_{\mathsf{q}, i}^+ - \hat{f}_i({x}_\mathsf{q}, {u}_\mathsf{q}; \mathcal{D}_{N, i})| \notag \\
&\le  |x_{i}^+ - \hat{f}_i({x}_\mathsf{q}, {u}_\mathsf{q}; \mathcal{D}_{N, i})| + \eta_{x, i} \notag \\
&\le  |f_i(x, u) + \sigma_{w_i} -\hat{f}_i({x}_\mathsf{q}, {u}_\mathsf{q}; \mathcal{D}_{N, i})| + \eta_{x, i} \notag \\
&\le |f_i({x}, {u}) - f_i({x}_\mathsf{q}, {u}_\mathsf{q})| + |f_i({x}_\mathsf{q}, {u}_\mathsf{q}) - \hat{f}_i({x}_\mathsf{q}, {u}_\mathsf{q}; \mathcal{D}_{N, i})| \notag \\
&\ \ \ \ \ \ \ \ \ \ \ \ \ \ \ \ \ \ \ \ \ \ \ \ \ \ \ \ \ + \sigma_{w_i} + \eta_{x, i} \notag \\
&\le b_i \cdot d_i ({x}, {x}_\mathsf{q}) +  \beta_{N, i} \cdot \sigma_{f_i}({x}_\mathsf{q}, u_\mathsf{q}; \mathcal{D}_{N, i}) + \sigma_{w_i} + \eta_{x, i} \notag \\
&\le b_i \cdot \epsilon_i + \beta_{N, i} \cdot \sigma_{f_i}({x}_\mathsf{q}, u_\mathsf{q}; \mathcal{D}_{N, i}) + \sigma_{w_i} + \eta_{x, i},
\end{align}
for all $i \in \mathbb{N}_{1: n_x}$, where ${z}_\mathsf{q} = [{x}^\top_\mathsf{q}, {u}^\top_\mathsf{q}]^\top$. From the above, it then follows that $x_{\mathsf{q}, i}^+ \in \mathcal{F}_i({x}_\mathsf{q}, {u}_\mathsf{q}; \mathcal{D}_{N, i}) \oplus \mathcal{W}_i \oplus \mathcal{E}_i$ for all $i \in \mathbb{N}_{1: n_x}$. This directly means that ${x}_\mathsf{q}^+ \in G_\mathsf{q}({x}_\mathsf{q}, {u}_\mathsf{q})$ with $({x}_\mathsf{q}^+, {x}^+) \in R(\varepsilon)$. Hence, $R(\varepsilon)$ in \req{varepsilonasr} is an $\varepsilon$-ASR from $\Sigma_\mathsf{q}$ to $\Sigma$.
\end{proof}

% \subsection{Proof of \rlem{intoutlemdis}}\label{intoutlemdisproof}
\subsection{Proof of \rlem{intoutlemdis}}\label{intoutlemdisproof}
\begin{proof}
Given $\widetilde{\gamma}_i$ by \req{contractiveparam} for all $i \in \mathbb{N}_{1: n_x}$, and $\mathcal{X}_{\mathsf{q}1}, \mathcal{X}_{\mathsf{q}2} \subseteq [\mathbb{R}^{n_x}]_{{ \eta}_x}$ be the any two sets satisfying $\mathcal{X}_{\mathsf{q}1} \subseteq \bigcap_{i=1}^{n_x}\widetilde{\mathsf{Int}}_{d_i}(\mathcal{X}_{\mathsf{q}2}; \widetilde{\gamma}_i)$. Then, from the definition of $\widetilde{\mathsf{Int}}_{d_i}(\cdot; \widetilde{\gamma}_i)$ ($i \in \mathbb{N}_{1: n_x}$), the relation between $\mathcal{X}_{\mathsf{q}1}$ and $\mathcal{X}_{\mathsf{q}2}$ is characterized by ball sets $\widetilde{\mathcal{B}}_{d_i}(\cdot; \widetilde{\gamma}_i)$ ($i \in \mathbb{N}_{1: n_x}$) as
\begin{align}\label{intdefdeploydis}
\mathcal{X}_{\mathsf{q}1} &\subseteq \bigcap_{i=1}^{n_x}\widetilde{\mathsf{Int}}_{d_i}(\mathcal{X}_{\mathsf{q}2}; \widetilde{\gamma}_i) = \bigcap_{i=1}^{n_x}\{{x}_{\mathsf{q}1} \in \mathcal{X}_{\mathsf{q}2}: \widetilde{\mathcal{B}}_{d_i}({x}_{\mathsf{q}1}; \widetilde{\gamma}_i) \subseteq \mathcal{X}_{\mathsf{q}2}\}.
\end{align}
Hence, from \req{intdefdeploydis}, given $x_{\mathsf{q}1} \in \mathcal{X}_{\mathsf{q}1}$, we then have
\begin{align}\label{balldefdeploydis}
\widetilde{\mathcal{B}}_{d_i}(x_{\mathsf{q}1}; \widetilde{\gamma}_i) \subseteq \mathcal{X}_{\mathsf{q}2},
\end{align}
for all $i \in \mathbb{N}_{1: n_x}$.
Moreover, recall that the ball set $\widetilde{\mathcal{B}}_{d_i}(x_{\mathsf{q}1}; \widetilde{\gamma}_i) \subseteq \mathcal{X}_\mathsf{q}$ centered at $x_{\mathsf{q}1} \in \mathcal{X}_\mathsf{q}$ is defined as $\widetilde{\mathcal{B}}_{d_i}(x_{\mathsf{q}1}; \widetilde{\gamma}_i) = \{{x}_{\mathsf{q}2} \in \mathcal{X}_\mathsf{q} : \ d_i({x}_{\mathsf{q}1}, {x}_{\mathsf{q}2}) \le \widetilde{\gamma}_i\}$. It follows, for all $x_{\mathsf{q}1}, x_{\mathsf{q}2} \in \mathcal{X}_\mathsf{q}$ with $d_i(x_{\mathsf{q}1}, x_{\mathsf{q}2}) \le \widetilde{\gamma}_i$, that 
\begin{align}\label{balldefdeploydis2}
    x_{\mathsf{q}2} \in \mathcal{B}_{d_i}({x}_{\mathsf{q}1}; \widetilde{\gamma}_i).
\end{align}
Therefore, combining \req{balldefdeploydis} and \req{balldefdeploydis2}, given $x_{\mathsf{q}1} \in \mathcal{X}_{\mathsf{q}1}$ and $x_{\mathsf{q}2} \in \mathcal{X}_\mathsf{q}$ satisfying $d_i(x_{\mathsf{q}1}, x_{\mathsf{q}2}) \le \widetilde{\gamma}_i$ for all $i \in \mathbb{N}_{1: n_x}$, follows that ${x}_{\mathsf{q}2} \in \mathcal{B}_{d_i}(x_{\mathsf{q}1}; \widetilde{\gamma}_i) \subseteq \mathcal{X}_{\mathsf{q}2}$. Furthermore, $d_i({x}_{\mathsf{q}1}, {x}_{\mathsf{q}2}) \le \widetilde{\gamma}_i$ are developed as
\begin{align}
d_i({x}_{\mathsf{q}1}, {x}_{\mathsf{q}2}) &= \sqrt{2\alpha_i^2\left\{1 - \exp\left(- \frac{1}{2}\|x_{\mathsf{q}1} - x_{\mathsf{q}2}\|^2_{\Lambda_{x, i}^{-1}}\right)\right\}} \notag \\ 
& \le \sqrt{2\alpha_i^2\left\{1 - \exp\left(- \frac{2\mathsf{z}^2_i}{n}\|{ \eta}_x\|^2_{\Lambda_{x, i}^{-1}}\right)\right\}},
\end{align}
for all $i \in \mathbb{N}_{1: n_x}$. Therefore, the fixed points $x_{\mathsf{q}1}, x_{\mathsf{q}2}$ satisfying $d_i({x}_{\mathsf{q}1}, {x}_{\mathsf{q}2}) \le \widetilde{\gamma}_i$ for all $i \in \mathbb{N}_{1: n_x}$ follows that
\begin{align}
|x_{\mathsf{q}1, i} - x_{\mathsf{q}2, i}| \le \mathsf{z}_i\frac{2}{\sqrt{n}} \eta_{x, i}.
\end{align}
for all $i \in \mathbb{N}_{1: n_x}$.
Hence, in the discrete state space $[\mathbb{R}^{n_x}]_{\eta_x}$, we then have
\begin{align}\label{outtransformdis}
&{x}_{\mathsf{q}1} \in \mathcal{X}_{\mathsf{q}1}, \ d_i({x}_{\mathsf{q}1}, {x}_{\mathsf{q}2}) \le \widetilde{\gamma}_i, \ \forall i \in \mathbb{N}_{1: n_x} \ \notag \\
&\Longleftrightarrow \ {x}_{\mathsf{q}2} \in \bigcup_{i = 1}^{n_x} \widetilde{\mathsf{Out}}_{d_i}(\mathcal{X}_{\mathsf{q}1}; \widetilde{\gamma}_i)
\end{align}
Therefore, combining \req{balldefdeploydis}, \req{balldefdeploydis2}, and \req{outtransformdis}, we obtain the result as
\begin{align}\label{outincludedis}
&\mathcal{X}_{\mathsf{q}1} \subseteq \bigcap_{i=1}^{n_x}\widetilde{\mathsf{Int}}_{d_i}(\mathcal{X}_{\mathsf{q}2}; \widetilde{\gamma}_i) \notag \\
&\Longleftrightarrow \ ( {x}_{\mathsf{q}1} \in \mathcal{X}_{\mathsf{q}1}, \ d_i({x}_{\mathsf{q}1}, {x}_{\mathsf{q}2}) \le \widetilde{\gamma}_i, \ \forall i \in \mathbb{N}_{1: n_x} \implies x_{\mathsf{q}2}\in \mathcal{X}_{\mathsf{q}2})  \notag \\
&\Longleftrightarrow \ ( {x}_{\mathsf{q}2} \in \bigcup_{i = 1}^{n_x} \widetilde{\mathsf{Out}}_{d_i}(\mathcal{X}_{\mathsf{q}1}; \widetilde{\gamma}_i) \implies \ {x}_{\mathsf{q}2} \in \mathcal{X}_{\mathsf{q}2}) \notag \\
&\Longleftrightarrow \ \bigcup_{i = 1}^{n_x} \widetilde{\mathsf{Out}}_{d_i}(\mathcal{X}_{\mathsf{q}1}; \widetilde{\gamma}_i) \subseteq \mathcal{X}_{\mathsf{q}2}.
\end{align}
This directly means that \rlem{intoutlemdis} holds.
\end{proof}

% \subsection{Proof of \rlem{0asrlem}}\label{0asrlemproof}
\subsection{Proof of \rlem{0asrlem}}\label{0asrlemproof}
\begin{proof}
Let $\Sigma_\mathsf{q} = (\mathcal{X}_\mathsf{q}, \mathcal{U}_\mathsf{q}, G_\mathsf{q})$ and ${\Sigma}_{\mathsf{q}\tilde{\gamma}} = (\mathcal{X}_{\mathsf{q}\tilde{\gamma}}, \mathcal{U}_{\mathsf{q}\tilde{\gamma}}, G_{\mathsf{q}\tilde{\gamma}})$ be the symbolic models in  \rdef{symbolictrans} and \rdef{symbolictransaug}, respectively. 
The sets of states and inputs for $\Sigma_\mathsf{q}$ and ${\Sigma}_{\mathsf{q}\tilde{\gamma}}$ are equal as $\mathcal{X}_\mathsf{q} = \mathcal{X}_{\mathsf{q}\tilde{\gamma}}$ and $\mathcal{U}_\mathsf{q} = \mathcal{U}_{\mathsf{q}\tilde{\gamma}}$, so for every $x_\mathsf{q} \in \mathcal{X}_\mathsf{q}$, there exits $x_{\mathsf{q}\tilde{\gamma}} \in \mathcal{X}_{\mathsf{q}\tilde{\gamma}}$ such that $x_{\mathsf{q}\tilde{\gamma}} = x_\mathsf{q}$, and it follows that $d_i(x_{\mathsf{q}\tilde{\gamma}}, x_{\mathsf{q}\tilde{\gamma}}) = 0$ for all $i \in \mathbb{N}_{1: n_x}$. Therefore, the condition (C.1) in \rdef{asrdef} is satisfied.
Moreover, the condition (C.2) in \rdef{asrdef} holds from the definition of $R(0)$ in \req{0asr}.
Let us now show that condition (C.3) holds. From the definition of the transition map $G_{\mathsf{q}\tilde{\gamma}}$ given by $G_{\mathsf{q}\tilde{\gamma}}({x}_{\mathsf{q}\tilde{\gamma}}, {u}_{\mathsf{q}\tilde{\gamma}}) = \bigcup_{i = 1}^{n_x} \mathsf{Out}_{\tilde{\gamma}_i}^{\mathcal{X}_{\mathsf{q}\tilde{\gamma}}}(G({x}_{\mathsf{q}\tilde{\gamma}}, {u}_{\mathsf{q}\tilde{\gamma}}))$, for every ${x}_{\mathsf{q}\tilde{\gamma}} = {x}_{\mathsf{q}} \in [\mathbb{R}^{n_x}]_{{ \eta}_x}$ and $u_{\mathsf{q}\tilde{\gamma}} = {u}_\mathsf{q} \in [\mathbb{R}^{n_u}]_{{ \eta}_u}$, we have $G_\mathsf{q}({x}_\mathsf{q}, {u}_\mathsf{q}) \subset G_{\mathsf{q}\tilde{\gamma}}({x}_{\mathsf{q}\tilde{\gamma}}, {u}_{\mathsf{q}\tilde{\gamma}})$. Hence, for every $x_\mathsf{q}^+ \in G_\mathsf{q}({x}_\mathsf{q}, {u}_\mathsf{q})$, there exits $x_{\mathsf{q}\tilde{\gamma}}^+ \in G_{\mathsf{q}\tilde{\gamma}}({x}_{\mathsf{q}\tilde{\gamma}}, {u}_{\mathsf{q}\tilde{\gamma}})$ satisfying $(x_{\mathsf{q}\tilde{\gamma}}^+, x_\mathsf{q}^+) \in R(0)$. This directly means that the condition (C. 3) in \rdef{asrdef} holds. 
Therefore, it is shown that the relation $R(0)$ in \req{0asr} is $0$-ASR from ${\Sigma}_{\mathsf{q}\tilde{\gamma}}$ to $\Sigma_\mathsf{q}$.
\end{proof}

\subsection{Proof of Proposition~2}
\begin{proof}
Suppose that \ralg{safetygame} is implemented with the inputs $\mathcal{X}$, $\mathcal{D}_N$, $\mathsf{q} = ({\eta}_x, { \eta}_u, \varepsilon)$, and $\widetilde{\gamma}$ in \req{contractiveparam} and the solution of the safety game is obtained as $\mathcal{Q}_{\ell - 1} = \mathcal{Q}_{\ell} = \mathcal{X}_{S, \mathsf{q}\tilde{\gamma}} \neq \varnothing$ for some $\ell \in \mathbb{N}_{>0}$. Then, from the notation of the transition map $G_{\mathsf{q}\tilde{\gamma}}$ in \rdef{symbolictransaug} and the operator ${\rm Pre}_{{\Sigma}_{\mathsf{q}\tilde{\gamma}}}$ in \req{predecessordef}, we have 
\begin{align}\label{discreatesafesettrans}
\mathcal{X}_{S, {\mathsf{q}\tilde{\gamma}}} &= {\rm Pre }_{{\Sigma}_{\mathsf{q}\tilde{\gamma}}}(\mathcal{X}_{S, {\mathsf{q}\tilde{\gamma}}})\notag\\
&=\{{x}_{\mathsf{q}\tilde{\gamma}} \in \mathcal{X}_{S, {\mathsf{q}\tilde{\gamma}}}: G_{\mathsf{q}\tilde{\gamma}}({x}_{\mathsf{q}\tilde{\gamma}}, {u}_{\mathsf{q}\tilde{\gamma}}) \subseteq \mathcal{X}_{S, {\mathsf{q}\tilde{\gamma}}}, \notag \\
& \ \ \ \ \ \ \ \ \ \ \ \ \ \ \ \ \ \ \ \ \ \ \ \ \ \ \ \ \ \ \ \ \ \ \ \ \ \ \ \ \exists {u}_{\mathsf{q}\tilde{\gamma}} \in \mathcal{U}_{\mathsf{q}\tilde{\gamma}}\} \notag \\
&= \{{x}_{\mathsf{q}\tilde{\gamma}} \in \mathcal{X}_{S, {\mathsf{q}\tilde{\gamma}}}: \bigcup_{i = 1}^{n_x}\widetilde{\mathsf{Out}}_{d_i}(G_\mathsf{q}({x}_{\mathsf{q}\tilde{\gamma}}, {u}_{\mathsf{q}\tilde{\gamma}}); \widetilde{\gamma}_i) \subseteq \mathcal{X}_{S, {\mathsf{q}\tilde{\gamma}}}, \notag \\
& \ \ \ \ \ \ \ \ \ \ \ \ \ \ \ \ \ \ \ \ \ \ \ \ \ \ \ \ \ \ \ \ \ \ \ \ \ \ \ \ \exists {u}_{\mathsf{q}\tilde{\gamma}} \in \mathcal{U}_{\mathsf{q}\tilde{\gamma}}\}.
\end{align}
Furthermore, using \rlem{intoutlemdis}, \req{discreatesafesettrans} is then computed as
\begin{align}\label{discreatesafeset}
\mathcal{X}_{S, {\mathsf{q}\tilde{\gamma}}}&= \{{x}_{\mathsf{q}\tilde{\gamma}} \in \mathcal{X}_{S, {\mathsf{q}\tilde{\gamma}}}: G_\mathsf{q}({x}_{\mathsf{q}\tilde{\gamma}}, {u}_{\mathsf{q}\tilde{\gamma}}) \subseteq \bigcap_{i = 1}^{n_x}\widetilde{\mathsf{Int}}_{d_i}(\mathcal{X}_{S, {\mathsf{q}\tilde{\gamma}}}; \widetilde{\gamma}_i), \notag \\
& \ \ \ \ \ \ \ \ \ \ \ \ \ \ \ \ \ \ \ \ \ \ \ \ \ \ \ \ \ \ \ \ \ \ \ \ \ \ \ \ \exists {u}_{\mathsf{q}\tilde{\gamma}} \in \mathcal{U}_{\mathsf{q}\tilde{\gamma}}\}.
\end{align}
Therefore, from \req{discreatesafeset}, we obtain the result that there exist control inputs $u_{\mathsf{q}\tilde{\gamma}} \in \mathcal{U}_{\mathsf{q}\tilde{\gamma}}$ such that all the $u_{\mathsf{q}\tilde{\gamma}}$-successors in the set $\mathcal{X}_{S, {\mathsf{q}\tilde{\gamma}}}$ are inside $\bigcap_{i = 1}^{n_x}\widetilde{\mathsf{Int}}_{d_i}(\mathcal{X}_{S, {\mathsf{q}\tilde{\gamma}}}; \widetilde{\gamma}_i)$. That is, for every $x_{\mathsf{q}\tilde{\gamma}} \in \mathcal{X}_{S, \mathsf{q}\tilde{\gamma}}$, there exists a controller $C_{{\mathsf{q}\tilde{\gamma}}}$ such that ${x}_{\mathsf{q}\tilde{\gamma}} \xrightarrow{{u}_{\mathsf{q}\tilde{\gamma}}} {x}_{\mathsf{q}\tilde{\gamma}}^+ \in \bigcap_{i = 1}^{n_x}\widetilde{\mathsf{Int}}_{d_i}(\mathcal{X}_{S, {\mathsf{q}\tilde{\gamma}}}; \widetilde{\gamma}_i)$, where $u_{\mathsf{q}\tilde{\gamma}} \in C_{{\mathsf{q}\tilde{\gamma}}}({x}_{\mathsf{q}\tilde{\gamma}})$. Moreover, using the $\varepsilon$-ASR from ${\Sigma}_{\mathsf{q}\tilde{\gamma}}$ to $\Sigma$, both $\mathcal{X}_{S}$ and $\bigcap_{i = 1}^{n_x} \mathsf{Int}_{d_i}(\mathcal{X}_{S}; \widetilde{\gamma}_i)$ can be computed from the fixed point set of $\mathcal{X}_{S, {\mathsf{q}\tilde{\gamma}}}$ and $\bigcap_{i = 1}^{n_x}\widetilde{\mathsf{Int}}_{d_i}(\mathcal{X}_{S, {\mathsf{q}\tilde{\gamma}}}; \widetilde{\gamma}_i)$. At the same time, the controller $C$ can be refined from the fixed point set of $C_{{\mathsf{q}\tilde{\gamma}}}$. Hence, all the states in $\mathcal{X}_S$ can drive into the set $\bigcap_{i=1}^{n_x}\mathsf{Int}_{d_i}(\mathcal{X}_S; \tilde{\gamma}_i)$ by applying the controller $C$. Therefore, $\mathcal{X}_S$ is a ${\widetilde{\gamma}}$-contractive set in $\mathcal{X}$. Furthermore, the terminal set $\mathcal{X}_f$ is defined as $\bigcap_{i=1}^{n_x}\mathsf{Int}_{d_i}(\mathcal{X}_S; \tilde{\gamma}_i) \subseteq \mathcal{X}_f$ in \req{terminalsetdef}. Thus, for every $x \in \mathcal{X}_S$, there exists a controller $C$ such that ${x} \xrightarrow{{u}} {x}^+ \in \mathcal{X}_f$, where $u \in C({x})$.
\end{proof}

\end{document}